\theoremstyle{plain} 
\newtheorem{thm}{Theorem}[section] 
\newtheorem*{ikedathm}{Theorem I} 
\newtheorem*{hidathm}{Theorem II} 
\newtheorem*{shintanithm}{Theorem III} 
\newtheorem*{stevensthm}{Theorem IV} 
\newtheorem{prop}[thm]{Proposition} 
\newtheorem{lem}[thm]{Lemma} 
\newtheorem{cor}[thm]{Corollary} 
\theoremstyle{definition} 
\newtheorem{defn}[thm]{Definition} 
\newtheorem{ex}[thm]{Example} 
\newtheorem{rem}[thm]{Remark} 
\newtheorem{fact}[thm]{Fact} 
\newtheorem*{acknow}{Acknowledgements} 
\theoremstyle{remark} 
\newtheorem*{notation}{Notation}
\def\@makefnmark{%
        \leavevmode
        \raise.9ex\hbox{\check@mathfonts
                \fontsize\sf@size\mathbb{Z}@\normalfont%
                        \@thefnmark}%
}
\title[Certain $p$-adic families of Siegel modular forms of even genus]{On certain constructions of $p$-adic families of Siegel modular forms 
of even genus}
\author{Hisa-aki KAWAMURA}
\address{(Primary) Institut Fourier, UFR de Math\'ematiques, Universit\'e Grenoble I, 100 rues des maths, BP 74, 38402 Saint-Martin d'H\`eres cedex, France}
\email{hisa@fourier.ujf-grenoble.fr}
\address{(Secondary) Department of Mathematics, Hokkaido University, Kita 10 Nishi 8, Kita-Ku, Sapporo, 060-0810 Hokkaido, Japan}
\email{kawamura@math.sci.hokudai.ac.jp}
\date{19 September, 2010}                                           
\dedicatory{
{To Professor Hiroshi Saito in memoriam}
}
\thanks{This research was partially supported by the JSPS Core-to-Core Program \lq\lq New Developments of Arithmetic Geometry, Motive, Galois Theory and Their Practical Applications\rq\rq. It is also supported by the JSPS International Training Program (ITP)}
\begin{document}
\maketitle
\begin{abstract}
Suppose that $p > 5$ is a rational prime. Starting from a well-known $p$-adic analytic 
family of ordinary elliptic cusp forms of level $p$ due to Hida, 
we construct a certain $p$-adic analytic family of holomorphic Siegel cusp forms of arbitrary even genus 
and of level $p$ associated with Hida's $p$-adic analytic family via the Duke-Imamo{\={g}}lu lifting provided by Ikeda. 
Moreover, we also give a similar results for the Siegel Eisenstein series of even genus with trivial Nebentypus.  
\end{abstract}

\tableofcontents

\section{Introduction}

\indent 
For a given rational prime $p> 5$, the study of $p$-adic analytic families of modular forms was initiated by Kummer and Eisenstein for the Eisenstein series on the elliptic modular group ${\rm SL}_{2}(\mathbb{Z})$, and afterwards was developed from various points of view by Iwasawa, Kubota-Leopoldt, Serre, Swinnerton-Dyer, Katz, Deligne-Ribet, Hida, Wiles and others. 
In particular, Hida \cite{Hid93} and Wiles \cite{Wil88}, among others, introduced the notion of $\Lambda$-adic modular forms, 
where $\Lambda=\mathbb{Z}_p[[1+p \mathbb{Z}_p]]$ is the Iwasawa algebra, as 
formal $q$-expansions with coefficients in a finite flat $\Lambda$-algebra $R$ 
whose specialization 
at each arithmetic point 
in the $\Lambda$-adic analytic space $\mathfrak{X}(R)={\rm Hom}_{\rm cont}(R,\, \overline{\mathbb{Q}}_p)$ 
gives rise to 
the $q$-expansion of 
an elliptic modular 
form.
In this context, a $p$-adic analytic family of modular forms can be regarded as an infinite collection of modular forms parametrized by varying weights whose components are 
interpolated by a $\Lambda$-adic modular form simultaneously. 

\vspace*{2mm}
In fact, a specific use of Hida's theory 
allows us to construct a $\Lambda$-adic modular form such that every specialization 
gives rise to a $p$-ordinary elliptic Hecke eigenform (i.e. a simultaneous eigenfunction of all Hecke operators such that the eigenvalue of the Atkin-Lehner $U_p$-operator is a $p$-adic unit), which is called the universal ordinary $p$-stabilized newform of tame level 1: 

\begin{fact}[cf. \S 3.1 below]
Let $\Lambda_1=\mathbb{Z}_p[[\mathbb{Z}_p^{\times}]]$ be the completed group ring on $\mathbb{Z}_p^{\times}$ over $\mathbb{Z}_p$, and $\omega=\omega_p$ the Teichm\"uller character. 
There exist a $\Lambda_1$-algebra $R^{\rm ord}$ finite flat over $\Lambda$ 
and a formal $q$-expansion $
{\bf f}_{\rm ord} \in R^{\rm ord}[[q]]$ 
such that 
for each arithmetic point 
$P \in \mathfrak{X}(R^{\rm ord})$ of weight $2k >2
$ with trivial Nebentypus (i.e. $P$ lies over the $\overline{\mathbb{Q}}_p^{\times}$-valued 
continuous character $y \mapsto 
y^{2k}\omega(y)^{2k}$ on $\mathbb{Z}_p^{\times}$), the specialization ${\bf f}_{\rm ord}(P)$ coincides with an ordinary $p$-stabilized newform $f_{2k}^{*}$ of weight $2k$ on 
the congruence subgroup $\Gamma_0(p) \subset {\rm SL}_2(\mathbb{Z})$ of level $p$ with trivial Nebentypus. 
Namely, there exists a $p$-ordinary normalized cuspidal Hecke eigenform $f_{2k}=\sum_{m=1}^{\infty} a_m(f_{2k}) q^m$ of weight $2k$ on 
${\rm SL}_2(\mathbb{Z})$ such that 
\[
f_{2k}^{*}(z) = f_{2k}(z) - \beta_p(f_{2k}) f_{2k}(pz) 
\] 
for all $z \in \mathfrak{H}_1:=\{ z \in \mathbb{C}\, |\, {\rm Im}(z) > 0 \}$, and 
$f_{2k}^{*}$ possesses the $U_p$-eigenvalue $\alpha_p(f_{2k})$, 
where $\alpha_p(f_{2k})$ and $\beta_p(f_{2k})$ denote the unit and non-unit $p$-adic roots of the equation 
\begin{equation}
X^2- a_p(f_{2k})
X+p^{2k-1}=0, 
\end{equation}
respectively. 
\end{fact}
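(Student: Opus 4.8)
This Fact is a cornerstone of Hida's theory of ordinary $\Lambda$-adic cusp forms, and the plan is to derive it by combining Hida's vertical control theorem with the Atkin--Lehner--Li theory of $p$-stabilization. First I would recall Hida's ordinary idempotent $e=\lim_{n\to\infty}\mathbf{U}_{p}^{n!}$, acting on the spaces $S_{k}(\Gamma_{1}(p))$ of cusp forms of varying weight $k\ge 2$, and form the space $\mathbf{S}^{\mathrm{ord}}$ of ordinary $\Lambda_{1}$-adic cusp forms of tame level $1$ together with its ordinary Hecke algebra $\mathbf{h}^{\mathrm{ord}}$, namely the $\Lambda_{1}$-subalgebra of $\mathrm{End}_{\Lambda_{1}}(\mathbf{S}^{\mathrm{ord}})$ generated by the Hecke operators $\mathbf{T}_{\ell}$ for primes $\ell\neq p$, by $\mathbf{U}_{p}$, and by the diamond operators $\langle y\rangle$ for $y\in\mathbb{Z}_{p}^{\times}$. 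Since the tame level is $1$, every form occurring here is new away from $p$, so no passage to a ``new'' quotient is required, and one has the tautological universal $q$-expansion $\sum_{m\ge 1}\mathbf{T}_{m}q^{m}\in\mathbf{h}^{\mathrm{ord}}[[q]]$.

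The one deep input, which I would quote rather than reprove, is Hida's theorem (see \cite{Hid93}) that $\mathbf{h}^{\mathrm{ord}}$ is finite and flat --- indeed free of finite rank --- over $\Lambda$, and that for every arithmetic point $P$ of weight $k\ge 2$ and finite-order Nebentypus $\psi$ the specialization at $P$ induces an isomorphism $\mathbf{h}^{\mathrm{ord}}\otimes_{\Lambda,P}\overline{\mathbb{Q}}_{p}\cong\mathbf{h}^{\mathrm{ord}}_{k}(\Gamma_{1}(p);\psi)\otimes\overline{\mathbb{Q}}_{p}$ onto the corresponding classical ordinary Hecke algebra, compatibly with the universal $q$-expansion. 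Granting this, I would set $R^{\mathrm{ord}}:=\mathbf{h}^{\mathrm{ord}}$ (or, if a connected object is preferred, its direct factor cut out by the trivial Nebentypus), regarded as a $\Lambda_{1}$-algebra finite flat over $\Lambda$, and $\mathbf{f}_{\mathrm{ord}}:=\sum_{m\ge 1}\mathbf{T}_{m}q^{m}\in R^{\mathrm{ord}}[[q]]$. An arithmetic point $P\in\mathfrak{X}(R^{\mathrm{ord}})$ of weight $2k>2$ with trivial Nebentypus then corresponds, through the control isomorphism, to a $\overline{\mathbb{Q}}_{p}$-algebra homomorphism out of the classical ordinary Hecke algebra on $S_{2k}(\Gamma_{0}(p))$ (the trivial-Nebentypus part of $S_{2k}(\Gamma_{1}(p))$), that is, to a system of Hecke eigenvalues; hence $\mathbf{f}_{\mathrm{ord}}(P)=\sum_{m\ge 1}P(\mathbf{T}_{m})q^{m}$ is the $q$-expansion of a normalized cuspidal Hecke eigenform $f^{*}_{2k}$ of weight $2k$ on $\Gamma_{0}(p)$ with trivial Nebentypus whose $\mathbf{U}_{p}$-eigenvalue $\alpha:=P(\mathbf{U}_{p})$ is, by the construction of $e$, a $p$-adic unit.

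It remains to recognize $f^{*}_{2k}$ as a $p$-stabilization. Since the only levels dividing $p$ are $1$ and $p$, Atkin--Lehner--Li theory says that $f^{*}_{2k}$ is either a newform of exact level $p$ or is built from a normalized newform $f_{2k}=\sum_{m\ge 1}a_{m}(f_{2k})q^{m}$ of level $1$ and weight $2k$. The first possibility is excluded: for a newform of exact level $p$ with trivial Nebentypus the Atkin--Lehner relation forces $a_{p}(f^{*}_{2k})^{2}=p^{2k-2}$, so that its $\mathbf{U}_{p}$-eigenvalue has $p$-adic valuation $k-1>0$ (this is where $2k>2$ is used), contradicting ordinarity. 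Hence $f^{*}_{2k}(z)=f_{2k}(z)-\gamma f_{2k}(pz)$ for some root $\gamma$ of $X^{2}-a_{p}(f_{2k})X+p^{2k-1}=0$, and a direct computation of $\mathbf{U}_{p}$ on the two-dimensional oldspace attached to $f_{2k}$ shows that the $\mathbf{U}_{p}$-eigenvalue of $f^{*}_{2k}$ is then the root \emph{other} than $\gamma$. Because the two roots $\alpha_{p}(f_{2k})$ and $\beta_{p}(f_{2k})$ have product $p^{2k-1}$, at most one of them is a $p$-adic unit, and such a unit root exists precisely when $a_{p}(f_{2k})=\alpha_{p}(f_{2k})+\beta_{p}(f_{2k})$ is a unit, i.e.\ precisely when $f_{2k}$ is $p$-ordinary. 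As $\alpha=P(\mathbf{U}_{p})$ is a unit, I conclude that $f_{2k}$ is $p$-ordinary, that $\gamma=\beta_{p}(f_{2k})$, and that $\alpha=\alpha_{p}(f_{2k})$; the identity $f^{*}_{2k}(z)=f_{2k}(z)-\beta_{p}(f_{2k})f_{2k}(pz)$ then holds on all of $\mathfrak{H}_{1}$ by analytic continuation from the equality of $q$-expansions.

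\textbf{Main obstacle.} The single genuinely hard ingredient is Hida's finiteness-plus-control theorem invoked above, which I would not reprove. Once it is granted, what remains is largely bookkeeping --- reconciling the normalizations of arithmetic points (weight, Nebentypus, and the passage between the $\Lambda_{1}$- and $\Lambda$-structures) and checking the $q$-expansion compatibilities --- together with the classical but indispensable observation that ordinarity rules out the genuinely $p$-new forms of weight $2k>2$. It is precisely this last point that guarantees \emph{every} weight-$2k$ specialization of $\mathbf{f}_{\mathrm{ord}}$ is an honest $p$-stabilized newform of tame level $1$, and not merely some ordinary cusp form of level $p$.
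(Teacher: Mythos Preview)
Your argument is correct and follows essentially the same route as the paper's treatment in \S 3.1: both construct $R^{\rm ord}$ as the (ordinary, primitive) Hecke algebra of tame level $1$, take ${\bf f}_{\rm ord}=\sum_{m\ge 1}{\rm T}_m\,q^m$ tautologically, invoke Hida's finiteness-and-control theorem (the paper quotes it as Theorem~II, citing \cite{G-S93,Hid86a,Hid93}) to identify specializations with classical ordinary eigenforms, and then use the Atkin--Lehner observation that a genuine newform of exact level $p$ and trivial Nebentypus has $|a_p|=p^{k-1}$, contradicting ordinarity when $2k>2$ (this is the paper's Remark~3.2). The only cosmetic difference is that the paper builds $R^{\rm ord}$ via the projective system of parabolic cohomology groups $V_r=H^1(X_1(p^r),\mathbb{Z}_p)$ and Hida's idempotents $e_{\rm ord}$, $e_{\rm prim}$, whereas you work directly with the ordinary Hecke algebra acting on $\Lambda$-adic cusp forms; since tame level is $1$ the primitive projection is harmless, as you note, and the two constructions yield the same object.
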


In this setting, we pick an integer $k_0
\ge 6
$ as small as possible to have an arithmetic point 
${P_0 \in \mathfrak{X}(R^{\rm ord})}$ of weight $2k_0$ with trivial Nebentypus 
corresponding to an actual modular form ${\bf f}_{\rm ord}(P_0)=
f_{2k_0}^{*}$, and fix an analytic 
neighborhood $\mathfrak{U}_0$ of $P_0$ in $ \mathfrak{X}(R^{\rm ord})$
on which every coefficient of ${\bf f}_{\rm ord}$ can be regarded as a $p$-adic analytic function. 
Then, by varying the weights $2k$ of arithmetic points $P \in \mathfrak{U}_0$ with trivial Nebentypus, 
we obtain an ordinary $p$-adic analytic family $\{f_{2k}^{*}\}$ parametrized by 
weights $2k \ge 2k_0$ with ${2k \equiv 2k_0 \pmod{(p-1)p^{m-1}}}$ 
for some sufficiently large integer $m \ge 1$. 
Hereinafter, we refer to it as the Hida family of level $p$.  

\vspace*{3mm}
On the other hand, as an affirmative answer to the Duke-Imamo{\={g}}lu conjecture 
concerning on a generalization of the Saito-Kurokawa lifting towards higher genus, 
Ikeda \cite{Ike01} established the following: 

\begin{fact}[cf. \S 2.1 below]
For each integer $n \ge 1$, 
let $f$ be a normalized cuspidal Hecke eigenform of weight $2k$ on ${\rm SL}_2(\mathbb{Z})$ with $k \equiv n \pmod{2}$.  
Then there exists a non-zero cuspidal Hecke eigenform ${\rm Lift}^{(2n)}(f)$ of weight $k+n$ on the Siegel modular group ${\rm Sp}_{4n}(\mathbb{Z}) \subset {\rm GL}_{4n}(\mathbb{Z})$ of genus $2n$ such that 
the standard $L$-function $L(s,\, {\rm Lift}^{(2n)}(f),\,{\rm st})$ is taken of the form  
\[
L(s,\, {\rm Lift}^{(2n)}(f),\,{\rm st})=\zeta(s)\prod_{i=1}^{2n}L(s+k+n-i,\,f), 
\]
where $\zeta(s)$ and $L(s,\,f)$ denote Riemann's zeta function and Hecke's $L$-function associated with $f$, respectively. 
\end{fact}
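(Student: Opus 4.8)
The plan is to follow Ikeda's construction in \cite{Ike01}, which produces the lift by writing down its Fourier expansion explicitly and then showing that the resulting formal series is modular. The first step is to descend to half-integral weight: by Kohnen's refinement of the Shimura correspondence, attached to $f=\sum_{m\ge 1}a_{m}(f)q^{m}$ there is a normalized Hecke eigenform $h=\sum_{N\ge 1}c(N)q^{N}$, unique up to a scalar, in Kohnen's plus space $S^{+}_{k+1/2}(\Gamma_{0}(4))$, with Fourier expansion supported on $N$ satisfying $(-1)^{k}N\equiv 0,1\pmod 4$; the congruence $k\equiv n\pmod 2$ in the statement is precisely what forces the discriminants occurring below to have the correct sign. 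For a prime $q$ write $a_{q}(f)=q^{k-1/2}(\alpha_{q}+\alpha_{q}^{-1})$, so that $\alpha_{q}$ is the (unitary) Satake parameter of $f$ at $q$.

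Next I would write down the candidate lift as a formal Fourier series $F(Z)=\sum_{T>0}A(T)\,e({\rm tr}(TZ))$ on the Siegel upper half space $\mathfrak{H}_{2n}$ of genus $2n$, the sum running over positive definite half-integral symmetric matrices $T$ of size $2n$. For such a $T$ write $(-1)^{n}\det(2T)=D_{T}\mathfrak{f}_{T}^{2}$ with $D_{T}$ a fundamental discriminant and $\mathfrak{f}_{T}\in\mathbb{Z}_{>0}$, and set
\[
A(T)=c(|D_{T}|)\,\mathfrak{f}_{T}^{\,k-1/2}\prod_{q\mid\mathfrak{f}_{T}}\tilde{F}_{q}(T;\alpha_{q}),
\]
where $\tilde{F}_{q}(T;X)$ is the normalized local Siegel series of $T$ at $q$: a Laurent polynomial in $X$ with constant term $1$, depending only on $T$ over $\mathbb{Z}_{q}$, equal to $1$ whenever $q\nmid\mathfrak{f}_{T}$, and satisfying the self-duality $\tilde{F}_{q}(T;X^{-1})=\tilde{F}_{q}(T;X)$.

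The heart of the proof --- and the step I expect to be by far the hardest --- is to show that this formal series is a holomorphic Siegel cusp form of weight $k+n$ on ${\rm Sp}_{4n}(\mathbb{Z})$. The decisive input is Katsurada's explicit formula for the Siegel series, which simultaneously produces the polynomials $\tilde{F}_{q}$ and yields their functional equation $\tilde{F}_{q}(T;X^{-1})=\tilde{F}_{q}(T;X)$; modularity must then be extracted from an intricate analysis of the $A(T)$. One natural line of attack is the Fourier--Jacobi expansion: writing $Z=\left(\begin{smallmatrix}\tau' & z\\ {}^{t}z & \tau\end{smallmatrix}\right)$ with $\tau'$ of size $2n-1$ and $\tau$ of size $1$, so that $F=\sum_{m\ge 1}\phi_{m}(\tau',z)\,e(m\tau)$, one shows that each $\phi_{m}$ is a Jacobi cusp form of weight $k+n$, index $m$ and genus $2n-1$; the generalized Eichler--Zagier isomorphism then presents $\phi_{m}$ in terms of vector valued Siegel modular forms of genus $2n-1$ whose Fourier coefficients, by the Euler product expansion of $\prod_{q}\tilde{F}_{q}(T;\alpha_{q})$, are governed by $L(s,f)$ in exactly the way Ikeda lifts in lower genus are, so that the modularity of $F$ follows by induction on $n$ from the classical case $n=1$ (the Saito--Kurokawa lift). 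An essentially equivalent route, closer to \cite{Ike01}, is to compare the $A(T)$ directly with the Fourier coefficients of a Siegel Eisenstein series, again through Katsurada's formula.

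It then remains to check three comparatively routine points. Nonvanishing: for $T$ with $\mathfrak{f}_{T}=1$ one has $A(T)=c(|D_{T}|)$, and by the Kohnen--Zagier formula (or Waldspurger's theorem) $c(|D|)\ne 0$ for infinitely many fundamental discriminants $D$, so $F\ne 0$. Cuspidality is immediate, because $F$ is by construction supported on positive definite $T$. Finally, to see that $F$ is a Hecke eigenform with the asserted standard $L$-function one computes the action of the Siegel Hecke operators $T(q)$ and $T_{j}(q^{2})$ on the coefficients $A(T)$: the multiplicativity of $\mathfrak{f}_{T}\mapsto\prod_{q\mid\mathfrak{f}_{T}}\tilde{F}_{q}(T;\alpha_{q})$ together with the Hecke relations for $h$ (equivalently for $f$) force the Satake parameters of $F$ at $q$ to be, in the arithmetic normalization, $\alpha_{q}\,q^{\,j-n-1/2}$ for $j=1,\dots,2n$, which, after adjoining the extra parameter $1$ responsible for the zeta factor, yields
\[
L(s,\,{\rm Lift}^{(2n)}(f),\,{\rm st})=\zeta(s)\prod_{i=1}^{2n}L(s+k+n-i,\,f),
\]
as claimed.
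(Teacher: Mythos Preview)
The paper does not prove this statement: it is quoted as a Fact in the introduction and restated in \S 2.1 as Theorem~I with attribution to Theorems~3.2 and~3.3 of \cite{Ike01}. The only related argument the paper supplies is Lemma~2.4 (nonvanishing, via Kohnen--Zagier and the nonvanishing of twisted central $L$-values), together with Kohnen's reformulation~(9) of the Fourier coefficients. Everything else --- modularity, cuspidality, Hecke eigenform property, the standard $L$-function identity --- is taken as a black box from \cite{Ike01}. So there is no ``paper's own proof'' to compare against; what you have written is a sketch of Ikeda's original argument.

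As such a sketch, your outline is broadly on target: the passage through the Shimura correspondence, the explicit Fourier coefficients built from $c(|D_T|)$ and the normalized local Siegel series $\tilde F_q(T;X)$, the role of the functional equation of $\tilde F_q$, and the Fourier--Jacobi expansion as the mechanism for modularity are all correct ingredients. Two points deserve correction, however. First, Ikeda's proof in \cite{Ike01} does \emph{not} proceed by induction on $n$ reducing to the Saito--Kurokawa case; rather, each Fourier--Jacobi coefficient $\phi_m$ is identified directly as a Jacobi cusp form by realizing it as a theta-type lift built from $h$, uniformly in $n$. Your ``essentially equivalent route'' via comparison with the Siegel Eisenstein series is in fact much closer to what Ikeda does. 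Second, Katsurada's explicit formula, while useful and cited in the present paper, is not the decisive input for modularity in \cite{Ike01}; the functional equation~(5) (due to Katsurada) suffices, and the deeper step is the analysis of the Fourier--Jacobi coefficients themselves.
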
 

When $n=1$, ${\rm Lift}^{(2)}(f)$ coincides with the Saito-Kurokawa lifting of $f$, whose existence was firstly conjectured by Saito and Kurokawa \cite{Kur78}, and afterwards was shown by 
Maa\ss  \,\cite{Maa79}, Andrianov \cite{And79} and Zagier \cite{Zag81}. 
In accordance with the tradition, 
we refer to ${\rm Lift}^{(2n)}(f)$ as the Duke-Imamo{\={g}}lu lifting of $f$ throughout the present article. 
We should mention that such particular objects obtained by means of the 
lifting process from lower genus are, of course, not ``genuine'' Siegel cusp forms of higher genus in the strict sense. Indeed, 
the above functoriality equation yields 
that 
the associated Satake parameter 
$(\psi_0(p),\psi_1(p), \cdots, \psi_{2n}(p)) \in (\overline{\mathbb{Q}}_p^{\times})^{2n+1}$ at $p$ 
is taken as 
\[
\psi_i(p)=\left\{
\begin{array}{ll}
p^{nk-n(n+1)/2} & \textrm{if }i=0, \\[1.5mm]
\alpha_p(f) p^{-k+i} & \textrm{if }1 \le i \le n, \\[2mm]
\beta_p(f) p^{-k+i-n} & \textrm{if }n+1 \le i \le 2n,
\end{array}\right.
\]
uniquely up to the action of the Weyl group $W_{2n} \simeq \mathfrak{S}_{2n} \ltimes \{\pm 1\}^{2n}$,  
where $(\alpha_p(f), \beta_p(f))$ denotes the ordered pair of the roots of the same equation as (1) 
with the inequality of $p$-adic valuations $v_p(\alpha_p(f)) \le v_p(\beta_p(f))$. 
Therefore the famous Ramanujan-Petersson conjecture for $f$ 
implies the fact that 
${\rm Lift}^{(2n)}(f)$ generates a non-tempered cuspidal automorphic representations of the group ${\rm GSp}_{4n}(\mathbb{A}_{\mathbb{Q}})$ of symplectic similitudes, where $\mathbb{A}_{\mathbb{Q}}$ denotes the ring of 
adeles of $\mathbb{Q}$. 
However, it has also been observed 
that some significant properties of ${\rm Lift}^{(2n)}(f)$ can be derived from 
corresponding properties of $f$. 
For instance, 
as will be explained more precisely in the sequel, 
the Fourier expansion of ${\rm Lift}^{(2n)}(f)$ can be written explicitly in terms of those of $f$ and a cuspidal Hecke eigenform of half-integral weight corresponding to $f$ via the Shimura correspondence. 

\vspace*{2.5mm}
Now, let us explain our results. 
Let 
$\{f_{2k}\}$ be the infinite 
collection of $p$-ordinary normalized cuspidal Hecke eigenforms on ${\rm SL}_2(\mathbb{Z})$ 
corresponding to the Hida family $\{f_{2k}^{*}\}$ 
via the ordinary $p$-stabilization. 
The aim of the present article is to construct a $p$-adic analytic family of Siegel cusp forms on the congruence subgroup 
$\Gamma_0(p) \subset {\rm Sp}_{4n}(\mathbb{Z})$ of level $p$ corresponding to 
the collection 
$\{{\rm Lift}^{(2n)}(f_{2k})\}$ under a suitable $p$-stabilization process. 
Namely, our main results are summarized as follows: 

\begin{thm}
For each integer $n \ge 1$, let $k_0$ be a positive integer with $k_0 > n+1$ and $k_0 \equiv n \pmod{2}$, $P_0 \in \mathfrak{X}(R^{\rm ord})$ an arithmetic point of weight $2k_0$ with trivial Nebentypus, 
and $\mathfrak{U}_0$ a fixed analytic neighborhood of $P_0$.  
For each integer $k \ge k_0$ with $k \equiv k_0 \pmod{2}$, 
put 
\begin{eqnarray*}
\Phi_p^{*}(Y)
&:=&
(Y-
\alpha_p(f_{2k})^n
)^{-1}
\prod_{r=1}^{2n}\,\, \prod_{1\le i_1 < \cdots < i_r \le 2n} 
(Y - \psi_0(p) \psi_{i_1\!}(p) \cdots \psi_{i_r\!}(p)), \\
\Psi_p^{*}(Y)&:=&
(Y-\alpha_p(f_{2k})^{n-1} p^{k+n-1})
\prod_{i=1}^{n}(Y-
\alpha_p(f_{2k})^{n-1} \beta_p(f_{2k})p^{2i-2}
),
\end{eqnarray*}
and 
\[
{\rm Lift}^{(2n)}(f_{2k})^{*} := {\Psi_p^{*}(\alpha_p(f_{2k})^n) \over 
\Phi_p^{*}(\alpha_p(f_{2k})^n)
} \cdot {\rm Lift}^{(2n)}(f_{2k})\, |_{k+n}
\Phi_p^{*}(U_{p,0}), 
\]
where $(\psi_0(p),\psi_1(p),\cdots,\psi_{2n}(p))
$ is the Satake parameter of ${\rm Lift}^{(2n)}(f_{2k})$ taken as above, and $U_{p,0}$ is the Hecke operator 
corresponding to the double coset 
\[
I_{4n}\, {\rm diag}(\underbrace{1,\cdots,1}_{2n},\underbrace{p,\cdots,p}_{2n})
\, I_{4n}  
\]  
with respect to 
the standard Iwahori subgroup $I_{4n}$ of ${\rm GSp}_{2g}(\mathbb{Z}_p)$. 
Then we have 
\begin{enumerate}[\upshape (i)]
\item 
${\rm Lift}^{(2n)}(f_{2k})^{*}$ is a cuspidal Hecke eigenform of weight $k+n$ on $\Gamma_0(p) \subset {\rm Sp}_{4n}(\mathbb{Z})$ with trivial Nebentypus such that the eigenvalues agree with those of 
${\rm Lift}^{(2n)}(f_{2k})$ 
for each prime $l \ne p$, 
and we have 
\[
{\rm Lift}^{(2n)}(f_{2k})^{*}\,|_{k+n}\,U_{p,0} = \alpha_p(f_{2k})^n\cdot {\rm Lift}^{(2n)}(f_{2k})^{*}. 
\]

\item 
\vspace*{0.5mm}
Let 
$\sigma : \Lambda_1 \to \Lambda_1$ be the ring homomorphism induced from the group homomorphism $y \mapsto y^2$ on $\mathbb{Z}_p^{\times}$, and  
\[
\widetilde{R}^{\rm ord}:=R^{\rm ord} \otimes_{\Lambda_1,\sigma} \Lambda_1,  
\]
a finite $\Lambda_1$-algebra with the structure homomorphism $\lambda \mapsto 1 \otimes \lambda$ on $\Lambda_1$. 
If $\widetilde{P}_0 \in \mathfrak{X}(\widetilde{R}^{\rm ord})$ is an arithmetic point lying 
over $P_0
$, 
there exist a formal Fourier expansion ${\bf F}$ with coefficients in the localization $
(\widetilde{R}^{\rm ord})_{(\widetilde{P}_0)}$ of $\widetilde{R}^{\rm ord}$ at $\widetilde{P}_0$, and a choice of $p$-adic periods $\Omega_{P} \in \overline{\mathbb{Q}}_p$ for $P \in \mathfrak{U}_0
$ in the sense of Greenberg-Stevens \cite{G-S93} satisfying the following properties: 
\begin{itemize}
\item
$\Omega_{P_0} \ne 0$. 
\item
There exists a normalization of ${\rm Lift}^{(2n)}(f_{2k})$ such that for each arithmetic point $\widetilde{P} \in \mathfrak{X}(\widetilde{R}^{\rm ord})$ lying over 
some arithmetic point $P \in \mathfrak{U}_0
$ 
of weight $2k$ with trivial Nebentypus, we have 
\[
{\bf F}(\widetilde{P})={\Omega_{P} \over \Omega^{\epsilon}(P)}\,{\rm Lift}^{(2n)}(f_{2k})^{*} \ne 0, 
\] 
where $\Omega^{\epsilon}(P) \in \mathbb{C}^{\times}$ 
is the non-zero complex period of $f_{2k}$ with signature $\epsilon \in \{\pm\}$ in the sense of Manin \cite{Man73} and Shimura \cite{Shim82}. 
\end{itemize}
\end{enumerate}
Therefore we obtain a 
$p$-adic analytic family of non-zero Siegel cusp forms $\{{\Omega_{P} \over \Omega^{\epsilon}(P)}\,
{\rm Lift}^{(2n)}(f_{2k})^{*}\}$ 
parametrized by varying weights $2k \ge 2k_{0}$ with 
\begin{center}
$k \equiv k_0 \pmod{2}$ and\,  $2k \equiv 2k_0 \pmod{(p-1)p^{m-1}}$ 
\end{center}
for some sufficiently large $m \ge 1$. 
\end{thm}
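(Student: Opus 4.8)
The plan is to handle the two assertions in turn: (i) is a ``local‑at‑$p$'' $p$‑stabilization statement about a single level‑one Siegel eigenform, while (ii) is the $p$‑adic interpolation of the resulting normalized forms as $k$ varies through $\mathfrak{U}_0$.

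For (i) I would begin from Ikeda's standard $L$‑function identity of \cite{Ike01}, which fixes the unramified local component $\Pi_p$ of the automorphic representation generated by ${\rm Lift}^{(2n)}(f_{2k})$ together with the Satake parameter $(\psi_0(p),\dots,\psi_{2n}(p))$ recorded in the Introduction. The first point is the elementary identity $\psi_0(p)\psi_1(p)\cdots\psi_n(p)=\alpha_p(f_{2k})^n$, obtained from $\psi_0(p)=p^{nk-n(n+1)/2}$ and $\psi_i(p)=\alpha_p(f_{2k})p^{-k+i}$ for $1\le i\le n$, so that $\alpha_p(f_{2k})^n$ is exactly the ``spinor'' product indexed by $I=\{1,\dots,n\}$ among the $2^{2n}$ products $\psi_0(p)\prod_{i\in I}\psi_i(p)$. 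The operator $U_{p,0}$ acts on the finite‑dimensional space of $\Gamma_0(p)$‑oldforms attached to ${\rm Lift}^{(2n)}(f_{2k})$ (the parahoric‑fixed subspace of $\Pi_p$), with eigenvalues among these spinor products; since ${\rm Lift}^{(2n)}(f_{2k})$ is a CAP form this oldspace is strictly smaller than $2^{2n}$‑dimensional and possibly non‑semisimple, and the structural input is that $\Phi_p^{*}(U_{p,0})$ annihilates every $U_{p,0}$‑(generalized) eigenvalue occurring there except $\alpha_p(f_{2k})^n$. Granting this, ${\rm Lift}^{(2n)}(f_{2k})\,|_{k+n}\Phi_p^{*}(U_{p,0})$ lies on the $\alpha_p(f_{2k})^n$‑eigenline of $U_{p,0}$; the scalar $\Psi_p^{*}(\alpha_p(f_{2k})^n)/\Phi_p^{*}(\alpha_p(f_{2k})^n)$ — with $\Phi_p^{*}(\alpha_p(f_{2k})^n)$ playing the role of the Lagrange‑projector denominator — rescales it to the normalization of ${\rm Lift}^{(2n)}(f_{2k})^{*}$ needed in (ii); cuspidality and the weight/level $k+n$ on $\Gamma_0(p)$ are immediate; the Hecke eigenvalues away from $p$ are unchanged because the prime‑to‑$p$ Hecke operators commute with the polynomial $\Phi_p^{*}(U_{p,0})$ and preserve the $U_{p,0}$‑eigenspaces; and the $U_{p,0}$‑eigenvalue equals $\alpha_p(f_{2k})^n$ by construction. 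In practice I expect the smoothest implementation to sidestep the abstract local representation theory and argue through Ikeda's explicit Fourier expansion — whose $T$‑th coefficient is a coefficient of the half‑integral‑weight Shimura lift $g_{2k}$ of $f_{2k}$ times a product over primes of local Siegel series $\widetilde{F}_q(T;\alpha_q(f_{2k}))$ — computing $U_{p,0}$ directly on $q$‑expansions and verifying the eigenvector property from a functional equation / recursion for the local Siegel series $\widetilde{F}_p$. The main obstacle for (i) is precisely this determination of the $U_{p,0}$‑action on the oldspace: one must know that $\Phi_p^{*}(U_{p,0})$ really projects onto a single line rather than a larger subspace, which is exactly where the non‑temperedness of the lift has to be exploited.

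For (ii) the plan is to produce the formal Fourier expansion ${\bf F}=\sum_T{\bf A}(T)q^T$ coefficient‑by‑coefficient and check interpolation. Fix a normalization of ${\rm Lift}^{(2n)}(f_{2k})$ so that, after the $p$‑stabilization of (i), the $T$‑th coefficient of ${\rm Lift}^{(2n)}(f_{2k})^{*}$ has the shape $c_{2k}(|D_T|)$ times an explicit power of the $p$‑adic unit $\alpha_p(f_{2k})$ times an explicit power of $p$ (with exponents linear in $k$) times $\prod_{q\ne p}\widetilde{F}_q(T;\alpha_q(f_{2k}))$. Each non‑$p$ Siegel‑series factor is a polynomial in $a_q(f_{2k})=a_q(f_{2k}^{*})$ and in $q^{j}$ with $j$ linear in $k$, hence interpolates through $a_q({\bf f}_{\rm ord})$ and the universal weight character; the $p$‑factor is a power of $a_p({\bf f}_{\rm ord})$ times a power of $p$, again interpolating; and the base change $\widetilde{R}^{\rm ord}=R^{\rm ord}\otimes_{\Lambda_1,\sigma}\Lambda_1$ along $\sigma\colon y\mapsto y^2$ is exactly what makes the weight $k+n$ of the Siegel form correspond to genuine arithmetic points $\widetilde{P}$ over $\mathfrak{U}_0$. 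The delicate factor is $c_{2k}(|D_T|)$: by the Kohnen-Zagier/Waldspurger formula its square equals, up to elementary factors, the algebraic central value $L(f_{2k}\otimes\chi_{D_T},k)/\Omega^{\epsilon}(f_{2k})$, so its $p$‑adic interpolation is exactly the content of the $\Lambda$‑adic modular symbol (equivalently, the $\Lambda$‑adic half‑integral‑weight form) of Greenberg–Stevens \cite{G-S93} and Stevens, and it is this step that both forces the ${\bf A}(T)$ into the localization $(\widetilde{R}^{\rm ord})_{(\widetilde{P}_0)}$ and produces the $p$‑adic period $\Omega_P$ as the comparison factor between the algebraic ($\Lambda$‑adic) normalization and the archimedean one governed by $\Omega^{\epsilon}(P)$ in the sense of Manin \cite{Man73} and Shimura \cite{Shim82}. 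Putting the pieces together yields ${\bf A}(T)\in(\widetilde{R}^{\rm ord})_{(\widetilde{P}_0)}$ with ${\bf F}(\widetilde{P})=\tfrac{\Omega_P}{\Omega^{\epsilon}(P)}\,{\rm Lift}^{(2n)}(f_{2k})^{*}$ at every arithmetic $\widetilde{P}$ over $P\in\mathfrak{U}_0$, and the non‑vanishing ${\bf F}(\widetilde{P}_0)\ne 0$ reduces to $\Omega_{P_0}\ne 0$, i.e.\ to the non‑vanishing at $P_0$ of the relevant twisted central $L$‑value; this is where the freedom in the choice of $k_0$, and if necessary of the distinguished Fourier index $D_T$, is combined with standard non‑vanishing results for central values. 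Thus the genuinely substantive input for (ii) is the Greenberg–Stevens/Stevens interpolation of the Shimura‑lift coefficients together with the attendant period comparison; once that is in place the Siegel‑series factors interpolate formally, ${\bf F}$ is the product of the interpolated pieces, and varying the weight over $\mathfrak{U}_0$ gives the asserted $p$‑adic analytic family.
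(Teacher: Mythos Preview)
Your proposal is correct and follows essentially the same route as the paper: part (i) is obtained by computing the Fourier expansion of ${\rm Lift}^{(2n)}(f_{2k})^{*}$ explicitly via Ikeda's formula and a Siegel-series identity at $p$ (the paper's Theorem 4.1, with the $U_{p,0}$-eigenvalue statement then immediate as Corollary 4.2), and part (ii) is obtained coefficient-by-coefficient by combining Kohnen's polynomial expression for the prime-to-$p$ Siegel-series factors with Stevens' $\Lambda$-adic Shintani lifting (the paper's Theorem IV and Proposition 3.7) to interpolate the half-integral-weight coefficient together with its $p$-Euler modification. The only points you leave implicit that the paper works out in detail are the precise Siegel-series identity needed in (i) (handled for $n=1$ via Kaufhold's explicit formula and for $n>1$ by invoking Zharkovskaya/Kitaoka) and, in (ii), a case analysis (Lemma 3.6 and Proposition 3.7) relating the level-$p$ and level-$1$ Shintani cycle integrals according to the residue of $\mathfrak{d}_T$ modulo $p$, which is what forces the choice $\mathfrak{d}_0\equiv 0\pmod{p}$ and the passage to the localization $(\widetilde{R}^{\rm ord})_{(\widetilde{P}_0)}$.
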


We note that the existence of such $p$-adic analytic families have been suggested by Guerzhoy \cite{Gue00} for $n=1$, and conjectured by Panchishkin \cite{Pan10} for arbitrary $n>1$. 
In particular, Guerzhoy \cite{Gue00} derived  
a similar $p$-adic interpolation property of an essential part of the Fourier expansion of ${\rm Lift}^{(2)}(f_{2k})$ under mild conditions. 
The above theorem can be regarded as a generalization of his result reformulated in the way that seems most appropriate for the study of $p$-adic properties 
of the Duke-Imamo{\={g}}lu lifting. 

\vspace*{3mm}
For the proof of Theorem 1.3, 
we will give an explicit form of the Fourier expansion of ${\rm Lift}^{(2n)}(f_{2k})^{*}$ (cf. Theorem 4.1 below).  
By combining this with the $\Lambda$-adic Shintani lifting due to Stevens \cite{Ste94}, 
we will give a $\Lambda$-adic analogue of the classical Duke-Imamo{\={g}}lu lifting 
for the universal ordinary $p$-stabilized newform ${\bf f}_{\rm ord}$, which 
allows us to resolve the $p$-adic interpolation problem for the whole Fourier expansion of ${\rm Lift}^{(2n)}(f_{2k})^{*}$ (cf. Theorem 4.4 below). 

\begin{rem}
From a geometric point of view, a $p$-adic deformation theory for Siegel modular forms of arbitrary genus has been established by Hida in the ordinary case (cf. \cite{Hid02, Hid04}, see also \cite{T-U99}).  
Unfortunately, ${\rm Lift}^{(2n)}(f_{2k})$ does not admit the ordinary $p$-stabilization in the sense of Hida. 
%
However, it turns out that a slight weaker version ${\rm Lift}^{(2n)}(f_{2k})^{*}$ 
is sufficient to adapt the ordinary theory. 
In the same spirit as Skinner-Urban \cite{S-U06}, we refer to it 
as the 
``semi-ordinary'' 
$p$-stabilization of ${\rm Lift}^{(2n)}(f_{2k})$. 
For further details on the topic will be discussed in \S 4 below. 
\end{rem}

When $n=1$, more generally in the same direction, Skinner-Urban \cite{S-U06} 
produced a $p$-adic deformation of the cuspidal automorphic representation of ${\rm GSp}_4(\mathbb{A}_{\mathbb{Q}})$ generated by 
${\rm Lift}^{(2)}(f)$ 
within the framework of a significant study of 
the Selmer group ${H_f^1(\mathbb{Q},\,V_f(k))}$ defined by Bloch-Kato \cite{B-K90}, where $V_f$ denotes the $p$-adic Galois representation associated with $f$ in the sense of Eichler-Shimura and Deligne (cf. \cite{Del69}). 
Although they must be not entirely smooth (e.g. we cannot associate Siegel modular forms of genus $2n \ge 4$ with Galois representations so far), similar arithmetic applications of 
 $p$-adic analytic families would be stimulated by  
the recent progress on 
Ikeda's generalization of the Duke-Imamo{\={g}}lu lifting 
towards a Langlands functorial lifting of cuspidal automorphic representations of ${\rm PGL}_2(\mathbb{A}_K)$ to ${\rm Sp}_{4n}(\mathbb{A}_K)$ over a totally real field extension $K/\mathbb{Q}$ (cf. \cite{Ike10}). 
Indeed, as a consequence of the period relation for ${\rm Lift}^{(2n)}(f)$ that was conjectured by Ikeda \cite{Ike06} and afterwards was shown by Katsurada and the author \cite{KK10}, we may produce a certain kind of congruence properties occurring between ${\rm Lift}^{(2n)}(f)$ and some genuine Siegel cusp forms of genus $2n$ under mild conditions, which is very similar to those in \cite{S-U06} (cf. \cite{Kat08,Kat10}, see also Brown \cite{Bro07} for $n=1$).
This type of congruence properties and their applications were firstly conjectured by Harder \cite{Har93} for the Saito-Kurokawa lifting 
and by Doi-Hida-Ishii \cite{DHI98} for the Doi-Naganuma lifting. 

\vspace*{2mm}
It should be emphasized that our approach based on the description of Fourier expansions is more explicit than the method using the theory of automorphic representations, and hence 
yields some practical benefit. For instance, the semi-ordinary $p$-stabilized form ${\rm Lift}^{(2n)}(f_{2k})^{*}$ can be regarded naturally as the Duke-Imamo{\={g}}lu lifting of 
$f_{2k}^{*}$, although the $p$-local component of the associated cuspidal automorphic representation of ${\rm Sp}_{4n}(\mathbb{A}_{\mathbb{Q}})$ is 
a quadratic twist of the Steinberg representation in general {(cf. \cite{Ike10})}. 

\vspace*{2mm}
Finally, as will be explained in \S 5, we note that the method we use for the Duke-Imamo{\={g}}lu lifting 
is adaptable to the Eisenstein series as well. Indeed, by taking Serre's $p$-adic analytic family of ordinary $p$-stabilized Eisenstein series instead of the Hida family, we also obtain a similar result for the Siegel Eisenstein series of genus $2n$, which is closely related to the results due to Takemori \cite{Tak10} and 
Panchishkin \cite{Pan00}. In addition, as conjectured in \cite{Pan10}, our constructions of $p$-adic analytic families are also extendable to those of families of Siegel cusp forms of odd genus 
by means of the Miyawaki lifting (cf. \cite{Ike06}). The corresponding result will appear elsewhere. 

\vspace*{3mm}
\begin{acknow}
The author 
is deeply grateful to Professor Alexei Panchishkin and Professor Haruzo Hida for their valuable suggestions and comments. Moreover, discussions with Professor Jacques Tilouine, Professor Hidenori Katsurada, Professor Siegfried B{\"o}cherer, Professor Tamotsu Ikeda, 
Professor Marc-Hubert Nicole 
and Dr. Tomokazu Kashio
have been illuminating for him. 
\end{acknow}

\smallskip

\begin{notation}
We denote by $\mathbb{Z},\, \mathbb{Q},\, \mathbb{R},$ and $\mathbb{C}$ the ring of integers, fields of rational numbers, real numbers and complex numbers, 
respectively. We put $e(x)=\exp(2\pi \sqrt{-1}x)$ for $x \in \mathbb{C}$. 
For each rational prime $l$, we denote by $\mathbb{Q}_l$, $\mathbb{Z}_l$ and $\mathbb{Z}_l^{\times}$ 
the field of $l$-adic numbers, the ring of $l$-adic integers and the group of units of $\mathbb{Z}_l$, respectively. Hereinafter, we fix an algebraic closure $\overline{\mathbb{Q}}_l$ of $\mathbb{Q}_l$. 
Let $v
_l(*)$ denote the $l$-adic valuation 
normalized as $v_l(l)=1$, and $e_l(*)$ the continuous additive character on $\overline{\mathbb{Q}}_l$ such that  
$e_l(y)=e(y)$ for all $y \in \mathbb{Q}$. 
Throughout the article, we fix an odd prime $p > 5$. 
From now on, we take the algebraic closure $\overline{\mathbb{Q}}$ of $\mathbb{Q}$ inside $\mathbb{C}$, and identify it with the image under a fixed embedding $\overline{\mathbb{Q}} \hookrightarrow \overline{\mathbb{Q}}_p$ once for all. 

For each integer 
$g \ge 1$, let ${\rm GSp}_{2g}
$ and ${\rm Sp}_{2g}
$ be 
the $\mathbb{Q}$-linear algebraic groups introduced as follows: 
\begin{enumerate}
\item[] 
$
{\rm GSp}_{2g}
:=
\left\{\, M \in {\rm GL}_{2g}\,
\right|  \left. {}^t M 
J
M = \mu(M) 
J
\textrm{ for some } \mu(M) \in 
{\rm GL}_1
\,\right\}
$,
\item[] 
${\rm Sp}_{2g}
:=\left\{\, M \in {\rm GSp}_{2g}
\left| \,\,\mu(M) =1\, \right. \right\}$, 
\end{enumerate}
where 
$
J=J_{2g} =
\left(
\begin{smallmatrix}
0_g
 & 1_g \\
{-1}_g & 
0_g
\end{smallmatrix}
\right) 
$ with the $g\times g$ unit (resp. zero) matrix $1_g$ {(resp. $0_g$)}. 
We denote by ${\rm B}_{2g}$ the standard Borel subgroup of ${\rm GSp}_{2g}$, and by 
${\rm P}_{2g}$ the associated Siegel parabolic subgroup, that is, 
${\rm P}_{2g}=\{M = \left(\begin{smallmatrix}
* & * \\
0_g & *
\end{smallmatrix}\right)\in {\rm GSp}_{2g}\}$. 
Each real point $M=\left(\begin{smallmatrix}
A & B \\
C & D
\end{smallmatrix}
\right) \in {\rm GSp}_{2g}(\mathbb{R})$ with $A,\,B,\,C,\,D \in {\rm Mat}_{g\times g}(\mathbb{R})$ and $\mu(M)>0$ acts on the Siegel upper-half space 
\[
\mathfrak{H}_g := \left\{ Z =X+\sqrt{-1}\,Y\in {\rm Mat}_{g\times g}(\mathbb{C}) \left|\, {}^t Z =Z,\, 
Y > 0\, (\textrm{positive definite})\right.\right\}
\]
of genus $g$ via the linear transformation $Z \mapsto M(Z)=(AZ+B)(CZ+D)^{-1}$. 
Then for a given $\kappa \in \mathbb{Z}$
and a function $F$ on $\mathfrak{H}_n$, we define an action of $M$ on $F$ by 
\[
(F|_
\kappa M )(Z):= \mu(M)^{g\kappa-g(g+1)/2} 
\det(CZ+D)^{-\kappa} F(M(Z)).
\] 
For handling Siegel modular forms of genus $g$, we consider the following congruence subgroups of the Siegel modular group 
${\rm Sp}_{2g}(\mathbb{Z})$: for each integer $N \ge 1$, put 
\begin{enumerate}[\upshape (i)]
\item[] $\Gamma_0(N)
:=\left\{ 
M \in {\rm Sp}_{2g}(\mathbb{Z}) \left|\,\,
M \equiv 
\left(\begin{array}{cc}
* & * \\
0_{g} & *
\end{array}\right) \pmod{N} 
\right. \right\}$,\\[1mm]

\item[] $\Gamma_1(N)
:=\left\{ 
M \in {\rm Sp}_{2g}(\mathbb{Z}) \left|\,\,
M \equiv 
\left(\begin{array}{cc}
* & * \\
0_{g} & 1_{g}
\end{array}\right) \pmod{N} 
\right. \right\}$. 
\end{enumerate} 
For each $\kappa \in \mathbb{Z}
$, 
the space $\mathscr{M}_{\kappa}(
\Gamma_1(N)
)^{(g)}
$ of ({\it holomorphic}\,) {\it Siegel modular forms} of weight $\kappa$ on $
\Gamma_1(N)\subseteq {\rm Sp}_{2g}(\mathbb{Z})
$, consists of $\mathbb{C}$-valued holomorphic functions $F$ on $\mathfrak{H}_{g}$ satisfying the following conditions: 
\begin{center}
\vspace*{-4mm}
\begin{minipage}[t]{0.975\textwidth}
\begin{enumerate}[\upshape (i)]

\item $F|_\kappa M = F$ \, for any 
$M
\in 
\Gamma_1(N)
$;

\item For each $M \in {\rm Sp}_{2g}(\mathbb{Z})$, the function $F|_\kappa M$ possesses a Fourier expansion of the form
\[
(F|_\kappa M)(Z) = 
\displaystyle\sum_
{T \in {\rm Sym}_g^{*}(\mathbb{Z})}
A_{T}(F|_\kappa M)\, e({\rm trace}(TZ)),
\]

\item[\hspace*{6mm}] 
where we denote by ${\rm Sym}_g^{*}(\mathbb{Z})$ the dual lattice of ${\rm Sym}_g(\mathbb{Z})$, that is, 
consisting of all half-integral symmetric matrices:   
\[
\hspace*{5mm}
{\rm Sym}_g^{*}(\mathbb{Z})=
\{T=(t_{ij}) \in {\rm Sym}_{g}(\mathbb{Q})\,|\,
t_{ii},\,2t_{ij} \in \mathbb{Z}\,\,(1 \leq i < j \leq g)
\}.\] 
It satisfies that $A_T(F |_\kappa M)=0$ unless $T\ge 0$ $(\textrm{semi positive definite})$ for all $M \in {\rm Sp}_{2g}(\mathbb{Z})$. 
\end{enumerate}
\end{minipage}
\end{center}
A modular form $F \in \mathscr{M}_{\kappa}(
\Gamma_1(N))^{(g)}$ is said to be {\it cuspidal} (or a {\it cusp form}) if it satisfies a stronger condition $A_{T}(F|_\kappa M) = 0$ unless $T > 0$ 
for all $M \in {\rm Sp}_{2g}(\mathbb{Z})$. We denote by $\mathscr{S}_{\kappa}(
\Gamma_1(N))^{(g)}
$ the subspace of $\mathscr{M}_{\kappa}(
\Gamma_1(N))^{(g)}$ consisting of all cusp forms. 
When $N=1$, we subsequently write $\mathscr{M}_{\kappa}({\rm Sp}_{2g}(\mathbb{Z}))$ and $\mathscr{S}_{\kappa}({\rm Sp}_{2g}(\mathbb{Z}))$ instead of $
\mathscr{M}_{\kappa}(\Gamma_1(1))^{(g)}$ and $
\mathscr{S}_{\kappa}(\Gamma_1(1))^{(g)}$, respectively. 
For each Dirichlet character $\chi$ modulo $N$, we denote by  
$\mathscr{M}_{\kappa}(\Gamma_0(N),\,\chi)^{(g)}$ the subspace of $\mathscr{M}_{\kappa}(\Gamma_1(N))^{(g)}$ consisting of all forms $F$ with Nebentypus $\chi$, that is, satisfying the condition
\[
F|_\kappa M = \chi(\det D) F \textrm{ \hspace{2mm}  for all } M
=\left(
\begin{smallmatrix}
  A & B  \\
  C & D  \\
\end{smallmatrix}
\right) 
\in \Gamma_0(N),  
\]
and put $\mathscr{S}_{\kappa}(\Gamma_0(N),\,\chi)^{(g)}:= \mathscr{M}_{\kappa}(\Gamma_0(N),\,\chi)^{(g)}\cap \mathscr{S}_{\kappa}(\Gamma_1(N))^{(g)}$. 
In particular, if $\chi=\chi_0$ is the principal character, we naturally write $\mathscr{M}_{\kappa}(\Gamma_0(N))^{(g)}=\mathscr{M}_{\kappa}(\Gamma_0(N), \chi_0)^{(g)}$ and ${\mathscr{S}_{\kappa}(\Gamma_0(N))^{(g)}=\mathscr{S}_{\kappa}(\Gamma_0(N),\,\chi_0)^{(g)}}$, respectively. 

For each $T=(t_{ij}) \in {\rm Sym}_g^{*}(\mathbb{Z})$ and $Z=(z_{ij}) \in \mathfrak{H}_g$, we write 
\[
q^{T}:=e({\rm trace}(TZ))
=\prod_{i=1}^{g} q_{ii}^{t_{ii}} \prod_{i<j \leq g} q_{ij}^{2t_{ij}}, 
\]
where $q_{ij}=e(z_{ij})\, (1\leq i \leq j \leq n)$. 
Then it follows from the definition that each $F \in \mathscr{M}_{\kappa}(
\Gamma_1(N)
)^{(g)}$ possesses the usual Fourier 
expansion 
\[
F(Z)=\sum_{\scriptstyle T \in {\rm Sym}_g^{*}(\mathbb{Z}), 
\atop {\scriptstyle T \ge 0}
} A_{T}(F)\, 
q^{T} \in 
\mathbb{C}[\,q_{ij}^{\pm 1}\,|\,1 \leq i < j \leq g\,][[q_{11},\cdots,q_{gg}]].    
\]
For each ring $R$, 
we write $R[[q
]]^{(g)}:=R[\,q_{ij}^{\pm 1}\,|\,1 \leq i < j \leq g\,][[q_{11},\cdots,q_{gg}]]$, 
which is a generalization of Serre's ring 
$R[[q
]]^{(1)}=R[[q]]$ consisting of all formal $q$-expansions with coefficients in $R$. 
In particular, if $F\in \mathscr{M}_{\kappa}(
\Gamma_1(N)
)^{(g)}$ is a Hecke eigenform (i.e. a simultaneous eigenfunction of all Hecke operators with similitude prime to $N$), then it is well-known that 
the field $K_F$ obtained by adjoining all Fourier coefficients of $F$ to $\mathbb{Q}$ is a totally real algebraic field of finite degree, to which we refer as the {\it Hecke field} of $F$.  
Hence we have \[
F \in K_F[[q
]]^{(g)} \subset \overline{\mathbb{Q}}
[[q
]]^{(g)} \hookrightarrow \overline{\mathbb{Q}}_p
[[q
]]^{(g)}. 
\]
For further details on Siegel modular forms set out above, see \cite{A-Z95} or \cite{Fre83}. 
\end{notation}

\section{Preliminaries}


\subsection{Classical Duke-Imamo{\={g}}lu lifting}

In this subsection, we review Ikeda's construction of the Duke-Imamo{\={g}}lu lifting for elliptic cusp forms (cf. \cite{Ike01}) and Kohnen's description of its Fourier expansion (cf. \cite{Koh02}). 
 
\vspace*{2.5mm}
To begin with, we recall some basic facts on elliptic modular forms of half-integral weight which were initiated by Shimura. 
For each $M 
\in \Gamma_0(4) \subset {\rm SL}_2(\mathbb{Z})$ and $z \in \mathfrak{H}_1$, put 
\[
j(M,z):={\theta_{1/2}(M(z))\over \theta_{1/2}(z)}, 
\]
where $\theta_{1/2}(z)=\sum_{m \in \mathbb{Z}} \mathbf{e}({m^2}z)$ is the standard theta function. It is well-known that $j(M,z)$ for $M \in \Gamma_0(4)$ satisfies a usual 1-cocycle relation, and hence defines a factor of automorphy. 
%
Then for each integers $k,\,N\ge1$, a 
$\mathbb{C}$-valued holomorphic function $h$ on $\mathfrak{H}_1$ 
is called a 
modular form of weight $k+1/2$ on $\Gamma_0(4N)$ if it 
satisfies the following conditions similar to those in the integral weight case: 
\begin{center}
\vspace*{-3mm}
\begin{minipage}[t]{0.975\textwidth}
\begin{enumerate}[\upshape (i)]

\item $(h|_{k+1/2}M) (z):=j(M,z)^{-2k-1} h(M(z))=h(z)$
for any $M \in \Gamma_0(4N)$; 

\item \vspace*{0.5mm}For each $M \in {\rm SL}_2(\mathbb{Z})$, the form $h|_{k+1/2}M$ has a Fourier expansion
\[
(h|_{k+1/2}M)(z)=\!\displaystyle\sum_{\scriptstyle m=0 
}^{\infty} c_m (h|_{k+1/2}M)\, q^m,  
\]
where $q=
e(z)$.  
\end{enumerate}
\end{minipage}
\end{center}
In particular, a modular form $h$ is said to be cuspidal (or a cusp form) 
if it satisfies the condition $c_0(h|_{k+1/2}M)=0$ for all $M \in {\rm SL}_2(\mathbb{Z})$. 
We denote by $\mathscr{M}_{k+1/2}(\Gamma_0(4N))^{(1)}$ and 
$\mathscr{S}_{k+1/2}(\Gamma_0(4N))^{(1)}$ 
the space consisting of all modular forms of weight $k+1/2$ on $\Gamma_0(4N)$ and its cuspidal subspace, respectively. 

\vspace*{2.5mm}
As one of the most significant properties of such forms of half-integral weight, Shimura \cite{Shim73} established that there exists a Hecke equivariant linear correspondence between $\mathscr{M}_{k+1/2}(\Gamma_0(4N))^{(1)}$ and $\mathscr{M}_{2k}(\Gamma_0(N))^{(1)}$, to which we refer as the {\it Shimura correspondence}. 
More precisely, Kohnen introduced 
the plus spaces $\mathscr{M}_{k+1/2}^{+}(\Gamma_0(4N))^{(1)}$ 
and $\mathscr{S}_{k+1/2}^{+}(\Gamma_0(4N))^{(1)}$ respectively to be the subspaces of $\mathscr{M}_{k+1/2}(\Gamma_0(4N))^{(1)}$ and $\mathscr{S}_{k+1/2}(\Gamma_0(4N))^{(1)}$ consisting of all forms $h$ with  
\[
c_m(h)=0 \textrm{ unless }(-1)^k m \equiv 0 \textrm{ or }1 \pmod{4}, 
\]
and showed that if either $k \ge 2$ or $k=1$ and $N$ is cubefree, 
the Shimura correspondence gives the diagram of linear isomorphisms  
\[
\begin{array}{ccc}
\mathscr{M}_{k+1/2}^{+}(\Gamma_0(4N))^{(1)} &\stackrel{\simeq}{\rightarrow}& \mathscr{M}_{2k}(\Gamma_0(N))^{(1)} \\
\cup & {} & \cup \\
\mathscr{S}_{k+1/2}^{+}(\Gamma_0(4N))^{(1)} & \stackrel{\simeq}{\rightarrow} &\mathscr{S}_{2k}(\Gamma_0(N))^{(1)}, 
\end{array}
\]
which is commutative with the actions of Hecke operators (cf. \cite{Koh80, Koh81, Koh82}). 
When $N=1$, 
the Shimura correspondence can be characterized explicitly in terms of Fourier expansions as follows: 
If $f=\sum_{m=1}^{\infty} a_m(f) q^m \in \mathscr{S}_{2k}(\mathrm{SL}_2(\mathbb{Z}))$ is a Hecke eigenform normalized as $a_1(f)=1$, and 
\[
h=\sum_{\scriptstyle m \ge 1, \atop {\scriptstyle (-1)^k m \equiv 0,1 \!\!\!\!\pmod{4}}} c_m(h) q^m \in \mathscr{S}_{k+1/2}^{+}(\Gamma_0(4))^{(1)}
\] 
corresponds to $f$ via the Shimura correspondence, then for each fundamental discriminant $\mathfrak{d}$ (i.e. 
$\mathfrak{d}$ is either $1$ or the discriminant of a quadratic field) with $(-1)^k \mathfrak{d} >0$, and $1 \le m \in \mathbb{Z}$, we have 
\begin{equation}
c_{
|\mathfrak{d}| m^2}(h)=c_{
|\mathfrak{d}|}(h) \sum_{d | m} \mu(d) \left({\,\mathfrak{d}\, \over d}\right) d^{k-1} a_{(m/d)}(f), 
\end{equation}
where $\mu(d)$ is the M{\"o}bius function, and $\left({\,\mathfrak{d}\, \over d}\right)$ the Kronecker character corresponding to $\mathfrak{d}$. We note that the inverse correspondence of the Shimura correspondence is 
determined uniquely up to scalar multiplication. It is because unlike the integral weight case, there is no canonical normalization of half-integral weight forms. 
Hence we should choose a suitable normalization of it in accordance with the intended use. 

\begin{rem}
As will be explained more precisely in \S\S 3.2 below, for integers $N \ge1$, $k \ge 2$ and a fundamental discriminant $\mathfrak{d}$ with $(-1)^k \mathfrak{d} >0$, Shintani \cite{Shin75} and Kohnen \cite{Koh81} constructed a theta lifting 
\[
\vartheta_{\mathfrak{d}} : \mathscr{S}_{2k}(\Gamma_0(N))^{(1)} \longrightarrow \mathscr{S}_{k+1/2}^{+}(\Gamma_0(4N))^{(1)},
\]
which 
gives an inverse correspondence of the Shimura correspondence admitting an algebraic normalization 
with respect to $(-1)^k \mathfrak{d}$. 
\end{rem}

\vspace*{2mm}On the other hand, for each integers $n$, $k \ge 1$ with $k> n+1$ and $n \equiv k \pmod{2}$, 
we define the ({\it holomorphic}) {\it Siegel Eisenstein series} of weight $k+n$ on ${\rm Sp}_{4n}(\mathbb{Z})$ as follows: 
for each $Z \in \mathfrak{H}_{2n}$, put 
\begin{eqnarray*}
E_{k+n}^{(2n)}(Z)&:=& 2^{-n} \zeta(1-k-n) \prod_{i=1}^{n} \zeta(1-2k-2n+2i) \\
&{}& \times \sum_{M=\left(\begin{smallmatrix}
* & * \\
C & D
\end{smallmatrix}\right) \in {\rm P}_{4n} \cap {\rm Sp}_{4n}(\mathbb{Z}) \backslash {\rm Sp}_4(\mathbb{Z})} \det(CZ+D)^{-k-n}.
\end{eqnarray*}
It is well-known that $E_{k+n}^{(2n)}$ is a non-cuspidal Hecke eigenform in $\mathscr{M}_{k+n}({\rm Sp}_{4n}(\mathbb{Z}))$. 
In addition, for each $T \in {\rm Sym}_{2n}^{*}(\mathbb{Z})$ with $T>0$, 
we decompose the associated discriminant $\mathfrak{D}_T:=(-1)^n \det(2T)$ 
into the form 
$$\mathfrak{D}_T = \mathfrak{d}_T\,\mathfrak{f}_T^2$$ with the fundamental discriminant 
$\mathfrak{d}_T$ corresponding to the quadratic field extension $\mathbb{Q}(\sqrt{\mathfrak{D}_T})/\mathbb{Q}
$ and an integer $\mathfrak{f}_T \ge 1$. 
Then the 
Fourier coefficient $A_T(E_{k+n}^{(2n)})$ 
is taken of the following form: 
\begin{equation}
A_T(E_{k+n}^{(2n)})=
L(1-k,\left({\mathfrak{d}_T \over *}\right)) 
\displaystyle\prod_{l | \mathfrak{f}_T } 
F_l(T;\, l^{k-n-1}
 ), 
\end{equation}
where $L(s,\left({\mathfrak{d}_T \over *}\right))
:=\sum_{m =1}^{\infty}\left({\mathfrak{d}_T \over m}\right)m^{-s}
$, 
and for each prime $l$, 
$F_l(T;\, X)
$ denotes the polynomial in one variable $X$ with coefficients in $\mathbb{Z}$ appearing in the factorization of the formal power series
\[
b_l(T;\,X):=
\sum_{R \in {\rm Sym}_{2n}(\mathbb{Q}_l)/{\rm Sym}_{2n}(\mathbb{Z}_l)} e_l({\rm trace}(TR)) 
X^{v_l(\nu_R)},
\]
where $\nu_R=[\mathbb{Z}_l^{2n} + \mathbb{Z}_l^{2n} R : \mathbb{Z}_l^{2n}]$, that is, 
\begin{equation}
b_l(T;\,X)
=
\displaystyle{(1-X)
\prod_{i=1}^{n} (1-l^{2i} X^2) \over 1- \left({\mathfrak{d}_T \over l}\right)l^n X}\, F_l(T;\,X)
\end{equation}
(cf. \cite{Shim73, Shim94b, 
Kit84,
Fei86}). 
%
Moreover, it is known that 
$F_l(T;\,X)$ is the polynomial of degree $2v_l(\mathfrak{f}_T)$ with $F_l(T;\,0)=1$ 
and satisfies the functional equation 
\begin{equation}
F_l(T;\, l^{-2n-1} X^{-1})=(l^{2n+1} X^2)^{- v_l(\mathfrak{f}_T)} F_l(T;\, X)
\end{equation}
(cf. \cite{Kat99}). 
In particular, we have $F_l(T;\,X)=1$ if $v_l(\mathfrak{f}_T)=0$. 
We easily see that $F_l(uT;\, X)=F_l(T;\,X)$ for each $u \in \mathbb{Z}_l^{\times}$. 

\begin{rem}
For each prime $l$, the formal power series $b_l(T;\,X)$ gives rise to the local Siegel series $b_l(T;\,s):=b_l(T;\,l^{-s})$ for $s \in \mathbb{C}$ with ${\rm Re}(s)>0$. In particular, for each even integer $\kappa 
 > 2n+1$, then the value 
$b_l(T;\,\kappa)$ coincides with the local density 
\begin{eqnarray*}
\lefteqn{
\alpha_l(T,\,H_{2\kappa}):=\lim_{r \to \infty} (l^r)^{-4n\kappa+n(2n+1)}
} \\
&& \times \# \!\left\{\,
U \in {\rm Mat}_{2\kappa \times 2n}(\mathbb{Z}_l/l^r \mathbb{Z}_l)
\, |\, 
{}^t U H_{2\kappa} U - T
\in l^r {\rm Sym}_{2n}^{*}(\mathbb{Z}_l) \,
\right\}, 
\end{eqnarray*}
where $H_{2\kappa}={1 \over \,2\,} \left(\begin{smallmatrix}
0_{\kappa} & 1_{\kappa} \\
1_{\kappa} & 0_{\kappa}
\end{smallmatrix}
\right)$. In this connection, there have been numerous papers focusing on the local densities of quadratic forms. 
\end{rem}

\vspace*{2mm}
Following \cite{Ike01}, we construct the the Duke-Imamo{\={g}}lu lifting as follows: 

\begin{ikedathm}[Theorems 3.2 and 3.3 in \cite{Ike01}]
Suppose that $n$, $k$ are positive integers with $n \equiv k \pmod{2}$.  
Let $f=\sum_{m=1}^{\infty} a_m(f) q^m \in \mathscr{S}_{2k}(\mathrm{SL}_2(\mathbb{Z}))$ be a normalized Hecke eigenform, and $h=\sum_{m \ge 1} c_m(h) q^m \in \mathscr{S}_{k+1/2}^{+}(\Gamma_0(4))^{(1)}$ a corresponding Hecke eigenform as in $(2)$. 
Then for each $0 < T \in {\rm Sym}_{2n}^{*}(\mathbb{Z})$ with discriminant $\mathfrak{D}_T
=\mathfrak{d}_T \,\mathfrak{f}_T^2$, put  
\begin{equation}
A_{T}({\rm Lift}^{(2n)}(f)) := c_{
|\mathfrak{d}_T|
}(h) \prod_{l | \mathfrak{f}_T} 
\alpha_l(f)^{v_l(\mathfrak{f}_T)} F_l(T;\, l^{-k-n}\beta_l(f)), 
\end{equation}
where for each prime $l$, we denote by $(\alpha_l(f),\beta_l(f))$ the ordered pair of the roots of 
$
X^2 - a_l(f) X + l^{2k-1} =0 
$
with $v_l(\alpha_l(f)) \le v_l(\beta_l(f))$. 
Then the Fourier expansion 
\[
{\rm Lift}^{(2n)}(f):=\displaystyle\sum_{\scriptstyle T \in {\rm Sym}_{2n}^{*}(\mathbb{Z}), 
\atop {\scriptstyle T > 0}
} A_{T}({\rm Lift}^{(2n)}(f))\, q^{T} 
\]
gives rise to a Hecke eigenform in $\mathscr{S}_{k+n}({\rm Sp}_{4n}(\mathbb{Z}))$ such that 
\[
L(s,\, {\rm Lift}^{(2n)}(f),\,{\rm st})=\zeta(s)\prod_{i=i}^{2n}L(s+k+n-i,\, f).
\]
\end{ikedathm}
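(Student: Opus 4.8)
\emph{Proof proposal.} The plan is to treat the Fourier series
\[
F := {\rm Lift}^{(2n)}(f) = \sum_{0 < T \in {\rm Sym}_{2n}^{*}(\mathbb{Z})} A_T(F)\, q^{T}
\]
displayed in the statement, with coefficients $A_T(F)$ as prescribed there, as a \emph{definition}, and then to establish --- in this order --- that $F$ (i) converges to a holomorphic function on $\mathfrak{H}_{2n}$; (ii) is invariant in weight $k+n$ under all of ${\rm Sp}_{4n}(\mathbb{Z})$, hence lies in $\mathscr{S}_{k+n}({\rm Sp}_{4n}(\mathbb{Z}))$; and (iii) is a non-zero simultaneous Hecke eigenform whose standard $L$-function factors as asserted. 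Since replacing $h$ by a scalar multiple merely rescales $F$, one may fix any $h$ as in $(2)$. Assertion (i) and the invariance of $F$ under the Siegel parabolic are routine: holomorphy follows from a polynomial bound $|A_T(F)| \ll (\det T)^{C}$, which one reads off from the classical growth estimates for the Fourier coefficients of the cusp form $h$ and for the $a_m(f)$, together with $|\mathfrak{d}_T| \le \det(2T)$, $\mathfrak{f}_T \le (\det 2T)^{1/2}$ and the fact that $F_l(T;X)$ has degree $2v_l(\mathfrak{f}_T)$; and invariance under ${\rm P}_{4n} \cap {\rm Sp}_{4n}(\mathbb{Z})$ is automatic, since $Z \mapsto Z + B$ with $B \in {\rm Sym}_{2n}(\mathbb{Z})$ is built into the shape of the $q$-expansion, while invariance under $Z \mapsto {}^{t}U Z U$ with $U \in {\rm GL}_{2n}(\mathbb{Z})$ amounts to $A_{{}^{t}U T U}(F) = A_T(F)$, which holds because $\mathfrak{d}_T$, $\mathfrak{f}_T$ and the polynomials $F_l(T;X)$ depend only on the ${\rm GL}_{2n}(\mathbb{Z})$-equivalence class of $T$.

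The crux is the remaining modular relation $F|_{k+n}J_{4n} = F$, where $J_{4n}$ acts by $Z \mapsto -Z^{-1}$; combined with the parabolic invariance this gives modularity for a standard set of generators of ${\rm Sp}_{4n}(\mathbb{Z})$, and essentially all of the work concentrates here. I would argue along the lines of \cite{Ike01}: the precise recipe
\[
A_T(F) = c_{|\mathfrak{d}_T|}(h)\,\prod_{l \mid \mathfrak{f}_T} \alpha_l(f)^{v_l(\mathfrak{f}_T)}\, F_l\bigl(T;\, l^{-k-n}\beta_l(f)\bigr)
\]
is engineered so that, after inserting Katsurada's explicit formula for the Siegel polynomial $F_l(T;X)$ (cf. \cite{Kat99}) and its functional equation $F_l(T;\, l^{-2n-1}X^{-1}) = (l^{2n+1}X^{2})^{-v_l(\mathfrak{f}_T)}F_l(T;\,X)$, the generating series $F$ can be matched term by term with an explicitly modular object built from $h$. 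One concrete route is to compare $A_T(F)$ with the Fourier coefficients
\[
A_T\bigl(E_{k+n}^{(2n)}\bigr) = L\!\left(1-k,\, \left(\tfrac{\mathfrak{d}_T}{*}\right)\right)\prod_{l \mid \mathfrak{f}_T} F_l\bigl(T;\, l^{k-n-1}\bigr)
\]
of the genus-$2n$ Siegel Eisenstein series, the same Siegel-series identities letting one transport the modularity of the Eisenstein series to $F$; for $n=1$ this is just the classical Saito--Kurokawa construction, where the data is repackaged through the Fourier--Jacobi expansion and the Maass relations, so that modularity for ${\rm Sp}_4(\mathbb{Z})$ is classical. \emph{Carrying out this comparison for general $n$ --- that is, proving that the purely formally defined $F$ genuinely transforms under all of ${\rm Sp}_{4n}(\mathbb{Z})$ --- is the main obstacle}, the combinatorics of the local Siegel series becoming substantially heavier as $n$ increases.

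Once modularity is in hand, the remaining assertions are comparatively formal. Cuspidality follows from the Fourier support of $F$ on $\{T > 0\}$ via the Siegel $\Phi$-operator, and $F \ne 0$ because $h \ne 0$ forces $c_{|\mathfrak{d}_0|}(h) \ne 0$ for some fundamental discriminant $\mathfrak{d}_0$ with $(-1)^n\mathfrak{d}_0 > 0$ (by $(2)$), and $\mathfrak{d}_0$ occurs as $\mathfrak{d}_T$ for some $T > 0$ with $\mathfrak{f}_T = 1$, so $A_T(F) = c_{|\mathfrak{d}_0|}(h) \ne 0$. For the Hecke action I would fix a prime $l$, decompose the generators of the local Hecke algebra at $l$ into coset representatives acting on $q$-expansions, apply them to the recipe above for $A_T(F)$, and simplify using (a) the half-integral-weight Hecke-eigenform property of $h$ written through $(2)$ --- which expresses $c_{|\mathfrak{d}|m^{2}}(h)$ in terms of $c_{|\mathfrak{d}|}(h)$ and the coefficients $a_m(f)$ --- together with $\alpha_l(f) + \beta_l(f) = a_l(f)$ and $\alpha_l(f)\beta_l(f) = l^{2k-1}$, and (b) the functional equation and degree of $F_l(T;X)$. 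This identifies $F$ as a simultaneous eigenform and produces its Satake parameter at $l$ as $(\psi_0(l),\dots,\psi_{2n}(l))$ with $\psi_0(l) = l^{nk-n(n+1)/2}$, $\psi_i(l) = \alpha_l(f)\,l^{-k+i}$ for $1 \le i \le n$ and $\psi_i(l) = \beta_l(f)\,l^{-k+i-n}$ for $n+1 \le i \le 2n$. Finally, writing the standard $L$-function as $\prod_l\bigl[(1-l^{-s})\prod_{i=1}^{2n}(1-\psi_i(l)l^{-s})(1-\psi_i(l)^{-1}l^{-s})\bigr]^{-1}$, pairing the factors indexed by $i$ and by $n+i$, and using $\alpha_l(f)^{-1} = \beta_l(f)\,l^{1-2k}$, one verifies that
\[
\prod_{i=1}^{2n}(1-\psi_i(l)l^{-s})(1-\psi_i(l)^{-1}l^{-s}) = \prod_{i=1}^{2n}(1-\alpha_l(f)\,l^{-s-k-n+i})(1-\beta_l(f)\,l^{-s-k-n+i}),
\]
whence the Euler product collapses to $\zeta(s)\prod_{i=1}^{2n}L(s+k+n-i,\,f)$, the asserted identity.
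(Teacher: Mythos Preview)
The paper does not prove this theorem; it is quoted as background from Ikeda \cite{Ike01} (labelled Theorem~I in \S2.1) with no argument supplied. So there is no ``paper's own proof'' to compare against --- only Ikeda's original one.

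Your outline is structurally sound, and you correctly isolate the genuine difficulty: the relation $F|_{k+n}J_{4n} = F$. The auxiliary pieces --- holomorphy from a polynomial bound on $A_T(F)$, invariance under the Siegel parabolic, cuspidality from the Fourier support, non-vanishing via a $T$ with $\mathfrak{f}_T=1$, and the final Euler-product manipulation giving the standard $L$-function --- are all essentially correct and match what one does once modularity is known. (Your Hecke computation and the resulting Satake parameters coincide with what the paper records in Remark~2.3, though that remark \emph{assumes} Ikeda's theorem rather than reproving the eigenform property.)

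The crux, however, is not supplied, and the route you suggest for it does not work as stated. You propose to ``transport'' the modularity of the Siegel Eisenstein series $E_{k+n}^{(2n)}$ to $F$ on the grounds that both have Fourier coefficients assembled from the same local polynomials $F_l(T;X)$, merely evaluated at different points. But the modularity of $E_{k+n}^{(2n)}$ comes from its very definition as an average over ${\rm P}_{4n}\cap{\rm Sp}_{4n}(\mathbb{Z})\backslash{\rm Sp}_{4n}(\mathbb{Z})$, not from any transferable feature of its Fourier coefficients; a formal resemblance of local factors gives no mechanism for passing automorphy from one series to another. Ikeda's actual proof is of a quite different nature and substantially deeper: he reinterprets the normalized local factors $X^{v_l(\mathfrak{f}_T)}F_l(T;\,l^{-n-1/2}X)$ as values of explicit Whittaker functions on the metaplectic double cover of ${\rm SL}_2(\mathbb{Q}_l)$, expands $F$ along its Fourier--Jacobi coefficients, and proves that each of these is a genuine Jacobi form by a representation-theoretic argument using the Weil representation and the compatibility of local Whittaker models. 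None of this is short-circuited by a comparison with Eisenstein Fourier coefficients. So while your proposal is candid about where the obstacle lies, it does not provide --- and the comparison you sketch could not provide --- a way past it.
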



\begin{rem}
For a given Hecke eigenform $F \in \mathscr{M}_{k+n}({\rm Sp}_{4n}(\mathbb{Z}))$ with the Satake parameter $(\psi_0(l),\psi_1(l), \cdots, \psi_{2n}(l)) \in (\mathbb{C}^{\times})^{2n+1}/W_{2n}$ 
for each prime $l$, then the spinor $L$-function $L(s,\, F,\,{\rm spin})$ and the standard $L$-function $
L(s,\, F,\,{\rm st})$ associated with $F$ are respectively defined as follows:
\begin{eqnarray*}
\lefteqn{
\hspace*{1.5mm}
L(s,\, F,\,{\rm spin})} \\
&:=&
\displaystyle\prod_{l < \infty} 
\left\{(1- \psi_0(l)l^{-s})\prod_{r=1}^{2n} \prod_{1 \le i_1 < \cdots < i_r \le 2n}
(1-\psi_0(l)\psi_{i_1\!}(l) \cdots \psi_{i_r\!}(l) l^{-s})\right\}^{-1}, 
\end{eqnarray*}
\begin{flushleft}
\hspace*{1.5mm}
$L(s,\,F,\,{\rm st})
:= \displaystyle\prod_{l < \infty}
\left\{ (1-l^{-s})\prod_{i=1}^{2n} (1-\psi_i(l) l^{-s})(1-\psi_i(l)^{-1} l^{-s})\right\}^{-1}$,
\end{flushleft}
Then 
it follows from the explicit form of 
$L(s,\,{\rm Lift}^{(2n)}(f),\,{\rm st})$ 
and 
the fundamental equation 
$\psi_0(l)^2 \psi_1(l) \cdots \psi_{2n}(l) = l^{2n(k+n)-n(2n+1)}$ 
that  
the Satake parameter of ${\rm Lift}^{(2n)}(f)$ is taken as 
\begin{equation}
{
\psi_i(l)=\left\{
\begin{array}{ll}
l^{nk-n(n+1)/2} & \textrm{if }i=0, \\[1.5mm]
\alpha_l(f) l^{-k+i} & \textrm{if }1 \le i \le n, \\[2mm]
\beta_l(f) l^{-k+i-n} & \textrm{if }n+1 \le i \le 2n. 
\end{array}\right.
}
\end{equation}
Hence the spinor $L$-function $L(s,\,{\rm Lift}^{(2n)}(f),\,{\rm spin})$ can be also written explicitly in terms of 
the symmetric power $L$-functions $L(s,\,f,\,{\rm sym}^{r})$ of $f$ with some $0 \le r \le n$ (cf. \cite{Mur02, Sch10}). 
\end{rem}

\vspace*{1mm}
According to the equation (3), we may formally look at the Siegel Eisenstein series $E_{k+n}^{(2n)}$ as the Duke-Imamo{\={g}}lu lifting of the normalized elliptic Eisenstein series \[
E_{2k}^{(1)}=
{\zeta(1-2k) \over 2}+ \sum_{m =1}^{\infty} \sigma_{2k-1}(m) q^m \in \mathscr{M}_{2k}({\rm SL}_2(\mathbb{Z})), 
\] 
where $\sigma_{2k-1}(m)=\displaystyle\sum_{\scriptstyle 0<d | m
} d^{2k-1}$. Indeed, 
\vspace*{-3mm}we easily see that for each prime $l$, 
$$(\alpha_l(E_{2k}^{(1)}), \beta_l(E_{2k}^{(1)}))=(1,l^{2k-1}),$$
and it is well-known that 
Cohen's Eisenstein series 
$H_{k+1/2} 
\in \mathscr{M}_{k+1/2}^{+}(\Gamma_0(4))^{(1)}$ corresponds to $E_{2k}^{(1)}$ via the Shimura correspondence, which possesses the Fourier coefficient 
$$c_{|\mathfrak{d}|}(H_{k+1/2})=L(1-k,\,\left({\,\mathfrak{d}\, \over *}\right))$$  
for each fundamental discriminant $\mathfrak{d}$ with $(-1)^k \mathfrak{d}>0$ (cf. \cite{Coh75, E-Z85}). 

\vspace*{3mm}
We also note that the Duke-Imamo{\={g}}lu lifting 
does not vanishes identically. Indeed, for each $0 < T \in {\rm Sym}_{2n}^{*}(\mathbb{Z})$ with $\mathfrak{D}_T
=\mathfrak{d}_T$ (i.e. $\mathfrak{f}_T=1$), the equation (6) yields the equation 
\[
A_T({\rm Lift}^{(2n)}(f))=c_{|\mathfrak{d}_T|}(h). 
\] 
Hence the non-vanishing of $A_T({\rm Lift}^{(2n)}(f))$ is guaranteed by the well-studied non-vanishing theorem for 
Fourier coefficients of $h$ as follows: 
\begin{lem}
For each integer $k \ge 6$, let $f \in \mathscr{S}_{2k}({\rm SL}_2(\mathbb{Z}))$ 
and $h \in \mathscr{S}_{k+1/2}^{+}(\Gamma_0(4))^{(1)}$ be taken as above. 
Then there exists a fundamental discriminant $\mathfrak{d}$ 
with $(-1)^k \mathfrak{d}>0$
such that  
\[c_{
|\mathfrak{d}|}(h) \ne 0. \] 
Moreover, for each prime $p$, a similar statement remains valid under the additional condition either $\mathfrak{d} \equiv 0 \pmod{p}$ or $\mathfrak{d} \not\equiv 0 \pmod{p}$. 
\end{lem}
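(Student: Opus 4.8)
The plan is to reduce the half-integral weight non-vanishing statement to the well-known non-vanishing results for twisted $L$-values of the elliptic eigenform $f$, via the Shimura–Kohnen correspondence, and then upgrade to the congruence conditions modulo $p$ by a standard averaging/sieving argument. First I would recall that since $h \in \mathscr{S}_{k+1/2}^{+}(\Gamma_0(4))^{(1)}$ corresponds to $f$ under the Shimura correspondence, Waldspurger's theorem (in the form due to Kohnen--Zagier, see \cite{Koh80,Koh81,Koh82}) gives, for each fundamental discriminant $\mathfrak{d}$ with $(-1)^k \mathfrak{d}>0$, the proportionality
\[
c_{|\mathfrak{d}|}(h)^2 = (\text{nonzero constant depending only on } f,k) \cdot |\mathfrak{d}|^{k-1/2}\, L\!\left(k,\, f \otimes \left(\tfrac{\mathfrak{d}}{*}\right)\right),
\]
so that $c_{|\mathfrak{d}|}(h) \ne 0$ if and only if the central twisted $L$-value $L(k, f\otimes(\tfrac{\mathfrak{d}}{*}))$ is nonzero. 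Thus the first assertion follows immediately from the classical theorem that there are infinitely many fundamental discriminants $\mathfrak{d}$ of a prescribed sign for which $L(k, f\otimes(\tfrac{\mathfrak{d}}{*})) \ne 0$ --- this is a consequence of the work of Waldspurger together with the non-vanishing results of Bump--Friedberg--Hoffstein, Murty--Murty, Iwaniec, Ono--Skinner, and others; alternatively, and more elementary here, one may invoke directly the non-vanishing theorems for Fourier coefficients of half-integral weight cusp forms proved by Kohnen--Zagier and Bruinier, which already assert the existence of such $\mathfrak{d}$ for $k \ge 6$.

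For the refined statement with the extra congruence condition at $p$, I would argue as follows. Fix a prime $p$. Case (a): we want $\mathfrak{d}$ with $p \nmid \mathfrak{d}$. Suppose for contradiction that $c_{|\mathfrak{d}|}(h)=0$ for \emph{every} fundamental discriminant $\mathfrak{d}$ with $(-1)^k\mathfrak{d}>0$ and $p \nmid \mathfrak{d}$. Using the explicit Shimura relation $(2)$, which expresses $c_{|\mathfrak{d}|m^2}(h)$ in terms of $c_{|\mathfrak{d}|}(h)$ and the Hecke eigenvalues of $f$, the vanishing of all such $c_{|\mathfrak{d}|}(h)$ with $p \nmid \mathfrak{d}$ forces the vanishing of $c_n(h)$ for all $n$ not divisible by $p$ in the relevant congruence classes; combined with Case (a)'s own dichotomy this would eventually force $h \equiv 0$, contradicting that $h$ corresponds to the nonzero eigenform $f$. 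More robustly, one invokes the stronger form of the Kohnen--Zagier / Bruinier non-vanishing theorem, which produces $\mathfrak{d}$ in any prescribed congruence class modulo a fixed auxiliary modulus (in particular modulo $p$), since the underlying analytic input --- nonvanishing of $L(k, f\otimes\chi)$ as $\chi$ ranges over quadratic characters --- can be arranged with local conditions at $p$. Case (b): we want $\mathfrak{d}$ with $p \mid \mathfrak{d}$; here one applies the same non-vanishing input to quadratic twists ramified at $p$, e.g. discriminants of the form $p^{*}\mathfrak{d}'$, which again is covered by the Kohnen--Zagier--Bruinier results allowing prescribed behaviour at a finite set of primes.

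The main obstacle is ensuring that the non-vanishing of $c_{|\mathfrak{d}|}(h)$ can be achieved \emph{simultaneously} with a prescribed splitting behaviour at $p$; a priori the classical theorems only guarantee \emph{some} admissible $\mathfrak{d}$, and one must check that the sieving of $\mathfrak{d}$ into a fixed residue class modulo $p$ does not destroy the positive-proportion non-vanishing. This is where I would lean on the quantitative versions (Kohnen--Zagier's $\gg X$ count of $\mathfrak{d}$ with $|\mathfrak{d}|\le X$ and $c_{|\mathfrak{d}|}(h)\ne 0$, refined by Bruinier and by Ono--Skinner to allow congruence constraints), together with the elementary observation via $(2)$ that one may always pass between $c_{|\mathfrak{d}|}(h)$ and $c_{|\mathfrak{d}|\ell^2}(h)$ for an auxiliary prime $\ell$ with $a_\ell(f)$ suitably chosen, trading a bad prime for a good one. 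I expect the write-up to be short: cite the non-vanishing theorem in the strong form, note that it already includes the freedom at $p$, and record the translation through Waldspurger's formula.
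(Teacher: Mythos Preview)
Your approach is essentially the same as the paper's: reduce via the Kohnen--Zagier formula relating $c_{|\mathfrak{d}|}(h)^2$ to the central twisted $L$-value $L_{\mathfrak{d}}(k,f)$, and then invoke the non-vanishing theorems for such twists (the paper cites Bump--Friedberg--Hoffstein and Waldspurger). The paper's treatment of the congruence conditions at $p$ is in fact terser than yours---it simply asserts that the cited non-vanishing results already allow the extra local condition---so your additional sieving discussion is more than is needed, but not incorrect.
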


\begin{proof}
For each fundamental discriminant $\mathfrak{d}$ with $(-1)^k \mathfrak{d} >0$, 
Kohnen-Zagier \cite{K-Z81} established the equation  
\begin{equation}
{
c_{|\mathfrak{d}|}(h)^2 \over 
\|h\|^2 }
={(k-1)! \over \pi^k} |\mathfrak{d}|^{k-1/2}{L_{\mathfrak{d}}(k,\,f) \over 
\|f\|^2}, 
\end{equation}
where $L_{\mathfrak{d}}(s,f):=\sum_{m=1}^{\infty} \left({\,\mathfrak{d}\, \over m}\right) a_m(f) m^{-s}$, and we denote by $\|f\|^2$ and $\|h\|^2 $ the Petersson norms square of $f$ and $h$ respectively, that is,  
\begin{eqnarray*}
\|f\|^2&=\langle f,\,f \rangle:=&\displaystyle\int_{{\rm SL}_2(\mathbb{Z}) \backslash \mathfrak{H}_1} |f(x+\sqrt{-1}y)|^2 y^{2k-2}dxdy, \\ 
\|h\|^2&=\langle h,\,h \rangle:=&\displaystyle{1 \over \,6\,}\int_{\Gamma_0(4) \backslash \mathfrak{H}_1} |h(x+\sqrt{-1}y)|^2 y^{k-3/2}dxdy.
\end{eqnarray*}
Hence the existence of a fundamental discriminant $\mathfrak{d}$ 
with desired properties 
follows immediately from the non-vanishing theorem for $L_{\mathfrak{d}_T}(k,\,f)$ (cf. 
\cite{BFH90,Wal91}). We complete the proof. 
\end{proof}
 
\vspace*{3mm}
For the convenience in the sequel, we describe the Fourier expansion of ${\rm Lift}^{(2n)}(f)$ a little more precisely. 
For each prime $l$ dividing $\mathfrak{f}_T$, by virtue of the functional equation (5), we have 
\[
\alpha_l(f)^{v_l(\mathfrak{f}_T)} F_l(T;\, l^{-k-n}\beta_l(f))
=\beta_l(f)^{v_l(\mathfrak{f}_T)} F_l(T;\, l^{-k-n}\alpha_l(f)), 
\]
and hence $\alpha_l(f)^{v_l(\mathfrak{f}_T)} F_l(T, l^{-k-n}\beta_l(f))$ can be written in terms of $\alpha_l(f)+\beta_l(f)=a_l(f)$ and $l^{-k}\alpha_l(f)\beta_l(f)=l^{k-1}$. In fact, Kohnen showed 
that for each 
$l$, 
\begin{equation}
\alpha_l(f)^{v_l(\mathfrak{f}_T)} F_l(T;\, l^{-k-n}\beta_l(f))
=\sum_{i=0}^{v_l(\mathfrak{f}_T)} \phi_T(l^{v_l(\mathfrak{f}_T)-i}) 
(l^{k-1})^{v_l(\mathfrak{f}_T)-i} a_{l^{i}}(f), 
\end{equation}
with some arithmetic function $\phi_T(d)$ with values in $\mathbb{Z}$ defined for each integer $d\ge1$ dividing $\mathfrak{f}_T$ (cf. \cite{Koh02}). Hence we obtain another explicit form 
\[
A_{T}({\rm Lift}^{(2n)}(f)) = c_{|\mathfrak{d}_T|}(h) \prod_{l | \mathfrak{f}_T} 
\sum_{i=0}^{v_l(\mathfrak{f}_T)} \phi_T(l^{v_l(\mathfrak{f}_T)-i}) 
l^{(v_l(\mathfrak{f}_T)-i)(k-1)} a_{l^{i}}(f). 
\]
We will make use of this equation as well as (6) in the sequel. 

\begin{rem}
For a given $f \in \mathscr{S}_{2k}({\rm SL}_{2}(\mathbb{Z}))$, 
Ikeda's construction of 
${\rm Lift}^{(2n)}(f)$ 
obviously depends on the choice of 
$h \in \mathscr{S}_{k+1/2}^{+}(\Gamma_0(4))^{(1)}$. 
By combining the equation (2) with Kohnen's refinement 
of the Fourier expansion of ${\rm Lift}^{(2n)}(f)$, 
we may realize the Duke-Imamo{\={g}}lu lifting 
as an explicit linear mapping $\mathscr{S}_{k+1/2}^{+}(\Gamma_0(4))^{(1)} \to \mathscr{S}_{k+n}({\rm Sp}_{4n}(\mathbb{Z}))$. 
Moreover, Kohnen-Kojima \cite{K-Ko05} and Yamana \cite{Yam10} characterized the image of the mapping in terms of a relation between Fourier coefficients, 
which can be regarded as a generalization of Maass' characterization of 
${\rm Lift}^{(2)}(f)$. 
\end{rem}

\subsection{$\Lambda$-adic Siegel modular forms}
In this subsection, we introduce the notion of $\Lambda$-adic Siegel modular forms of arbitrary genus $g \ge 1$ 
from point of view of Fourier expansions. 

\vspace*{2mm}
Let $\varGamma=1+p\mathbb{Z}_p$ be the maximal torsion-free subgroup of $\mathbb{Z}_p^{\times}$. We choose and fix a topological generator $\gamma \in \varGamma$ such that $\varGamma=\gamma^{\mathbb{Z}_p}$. 
Let $\Lambda=\mathbb{Z}_p[[\varGamma]]$ and $\Lambda_1=\mathbb{Z}_p[[\mathbb{Z}_p^{\times}]]$ be the completed group rings on $\varGamma$ and on $\mathbb{Z}_p^{\times}$ over $\mathbb{Z}_p$, respectively. We easily see that $\Lambda_1$ has a natural $\Lambda$-algebra structure induced from the natural isomorphism $\Lambda_1 \simeq \Lambda[
\mu_{p-1}
]$, where $\mu_{p-1}$ denotes the maximal torsion subgroup consisting of all $(p-1)$-th roots of unity. 

\begin{rem}
As is well-known, $\Lambda$ is isomorphic to the power series ring $\mathbb{Z}_p[[X]]$ in one variable $X$ with coefficients in $\mathbb{Z}_p$ under 
$\gamma
\mapsto 1+X$. 
In addition, it is also known that $\mathbb{Z}_p[[X]]
$ is isomorphic to the ring ${\rm Dist}(\mathbb{Z}_p,\mathbb{Z}_p)$ consisting of all distributions on $\mathbb{Z}_p$ with values in $\mathbb{Z}_p$. 
Indeed, any distribution $\mu \in {\rm Dist}(\mathbb{Z}_p,\mathbb{Z}_p)$ corresponds to  
the power series
\[
A_{\mu}(X) =\int_{\mathbb{Z}_p} (1+X)^x d\mu(x) = \sum_{m=0}^{\infty} \int_{\mathbb{Z}_p} 
\binom{x}{m} 
d\mu(x) X^m \in \mathbb{Z}_p[[X]],
\]
where $\displaystyle\binom{x}{m}$ is the binomial function. 
Therefore we obtain \vspace*{-2mm}
\[
\Lambda \simeq \mathbb{Z}_p[[X]] \simeq {\rm Dist}(\mathbb{Z}_p,\mathbb{Z}_p), 
\]
which allows us to consider the definition of $\Lambda$-adic Siegel modular forms below from a different point of view.  
\end{rem}

To begin with, 
we introduce the $\Lambda$-adic analytic spaces as an alternative notion for the weights of holomorphic Siegel modular forms in the following: 

\begin{defn}[$\Lambda$-adic analytic spaces]
For each $\Lambda_1$-algebra $R$ finite flat over $\Lambda$, we define the 
{\it $\Lambda$-adic analytic space} $\mathfrak{X}(R)$ associated with $R$ as 
\[
\mathfrak{X}(R):={\rm Hom}_{\rm cont}(R, \overline{\mathbb{Q}}_p), 
\]
on which the following arithmetic data are introduced: 
\begin{itemize}
\item[(i)]
A point $P \in \mathfrak{X}(R)$ is said to be {\it arithmetic} if 
there exists an integer $\kappa \geq 2$ such that the restriction of $P$ to $\mathfrak{X}(\Lambda) 
:={\rm Hom}_{\rm cont}(\Lambda, \overline{\mathbb{Q}}_p)
\simeq {\rm Hom}_{\rm cont}(\varGamma, \overline{\mathbb{Q}}_p^{\times})
$ corresponds to a continuous character $
P_{\kappa} : \varGamma \to \overline{\mathbb{Q}}_p^{\times}$ satisfying
$
P_{\kappa}(\gamma) = \gamma^{\kappa}.
$
We denote by $\mathfrak{X}_{\rm alg}(R)$ the subset consisting of all arithmetic points in $\mathfrak{X}(R)$. 
\item[(ii)]
An arithmetic 
point $P \in \mathfrak{X}_{\rm alg}(R)$ is said to be of {\it signature} $(\kappa,\,\varepsilon)$ if 
there exist an integer $\kappa \geq 2$ and a finite character $\varepsilon : \mathbb{Z}_p^{\times} \to \overline{\mathbb{Q}}_p^{\times}$ such that $P$ lies over 
the point $P_{\kappa,\varepsilon} \in \mathfrak{X}_{\rm alg}(\Lambda_1)\simeq {\rm Hom}_{\rm cont}(\mathbb{Z}_p^{\times}, \overline{\mathbb{Q}}_p^{\times})$ corresponding to the character 
$P_{\kappa,\varepsilon}(y)= y^{\kappa}\varepsilon(y)$ on $\mathbb{Z}_p^{\times}$. 
For simplicity, we denote such $P$ by $P=(\kappa, \varepsilon)$ and often refer to it as the arithmetic point of {\it weight} $\kappa$ with {\it Nebentypus} $\varepsilon \omega^{-\kappa}$ 
in the sequel. 
\end{itemize}
\end{defn}

We note that $\mathfrak{X}(\Lambda)$ has a natural analytic structure induced from the identification ${{\rm Hom}_{\rm cont}(\Lambda, \overline{\mathbb{Q}}_p)
\simeq {\rm Hom}_{\rm cont}(\Gamma, \overline{\mathbb{Q}}_p)}$. Moreover, 
restrictions 
to $\Lambda_1$ and then to $\Lambda$
induce a surjective finite-to-one mapping 
\[
\pi : \mathfrak{X}(R) \twoheadrightarrow \mathfrak{X}(\Lambda_1) \twoheadrightarrow \mathfrak{X}(\Lambda), 
\]
which allows us to define analytic charts around all points of $\mathfrak{X}_{\rm alg}(R)$. 
Indeed, it is established by Hida 
that each $P \in \mathfrak{X}_{\rm alg}(R)$ is unramified over $\mathfrak{X}(\Lambda)$, and consequently 
there exists a natural local section of $\pi$ 
\[
S_{P}: U_{P} \subseteq \mathfrak{X}(\Lambda) \to \mathfrak{X}(R)
\]
defined on a neighborhood $U_{P}$ of $
\pi(P)
$ such that $S_{P}(
\pi(P))={P}$. These local sections 
endow $\mathfrak{X}(R)$ with analytic charts around points in $\mathfrak{X}_{\rm alg}(R)$. For each $P \in \mathfrak{X}_{\rm alg}(R)$, a function ${{\bf f} : \mathfrak{U} \subseteq \mathfrak{X}(R) \to \overline{\mathbb{Q}}_p}$ defined on $\mathfrak{U}=S_{P}(U_{P})$ is called {\it analytic} if ${{\bf f} \circ S_{P} : U_{P} 
\to \overline{\mathbb{Q}}_p}$ is analytic. 
In parallel, an open subset $\mathfrak{U} \subseteq \mathfrak{X}(R)$ containing some $P \in \mathfrak{X}_{\rm alg}(R)$ is called an {\it analytic neighborhood} of $P$ if $\mathfrak{U}=S_{P}(U_{P})$. 
For instance, we easily see that each element ${\bf a} \in R$ gives rise to an analytic function 
${\bf a} : \mathfrak{X}_{\rm alg}(R)\to \overline{\mathbb{Q}}_p$ defined by $\mathbf{a}(P)=P(\mathbf{a})$. 
In most generality, if $P \in \mathfrak{X}(R)$ is unramified over $\mathfrak{X}(\Lambda)$, then each 
element ${\bf a} \in R_{(P)}$ gives rise to an analytic function defined on some analytic neighborhood of $P$, where $R_{(P)}$ denotes the localization of $R$ at $P$, and gives rise to a discrete valuation ring finite and 
unramified over $\Lambda$ (cf. Corollary 1.4 in \cite{Hid86a}). 
From this point of veiw, we refer to the evaluation $\mathbf{a}(P)$ at $P \in \mathfrak{X}_{\rm alg}(R)$ 
as the {\it specialization} of $\mathbf{a}$ at $P$ according to the custom. 

\vspace*{3mm}
Following \cite{G-S93, Hid93} and \cite{Pan00}, we define the $\Lambda$-adic Siegel modular forms 
as follows: 

\begin{defn}[$\Lambda$-adic Siegel modular forms]
Let $R$ be a $\Lambda_1$-algebra finite flat over $\Lambda$.  
%
For each integer $g \ge 1$, 
pick $P_0 =({\kappa_0},\,\omega^{{\kappa_0}})\in \mathfrak{X}_{\rm alg}(R)$ with ${\kappa_0}> g+1$. 
A 
formal Fourier expansion 
\[
\mathbf{F} = \displaystyle\sum_{\scriptstyle T \in {\rm Sym}_g^{*}(\mathbb{Z}), 
\atop {\scriptstyle T \ge 0}
}
\mathbf{a}_{T}\,q^{T} \in R_{(P_0)}
[[q]]^{(g)}
\]
is called a {\it $\Lambda$-adic Siegel modular form} of genus $g$ and of level $1$ 
if there exists an analytic neighborhood $\mathfrak{U}_0$ of $P_0$ such that 
for each arithmetic point 
$P =(\kappa, \omega^{\kappa})\in \mathfrak{U}_0
$ 
with $\kappa \geq {\kappa_0}
$, 
the specialization 
\[
\mathbf{F}({P}) := \sum_{T} \mathbf{a}_{T}(P)
 \, q^{T}
 \in \overline{\mathbb{Q}}_p[[q]]^{(g)} 
\]
gives rise to the Fourier expansion of 
a holomorphic Siegel modular form in
$\mathscr{M}_{\kappa}(\Gamma_0(p))^{(g)}$. 
In particular, a $\Lambda$-adic Siegel modular form $\mathbf{F}$ 
is said to be {\it cuspidal} (or a {\it cusp form}\,) if $\mathbf{F}(P) \in \mathscr{S}_{\kappa}(\Gamma_0(p))^{(g)}$ 
for almost all $P \in \mathfrak{U}_0$. 
\end{defn}

If there exists a $\Lambda$-adic Siegel modular form ${\bf F} \in R_{(P_0)}[[q]]^{(g)}$, then each coefficient ${\bf a}_{T} \in R_{(P_0)}$ of ${\bf F}$ gives rise to an analytic function defined on $\mathfrak{U}_0
$. Hence every specialization ${\bf F}(P)$ gives a holomorphic Siegel modular form whose Fourier coefficients are $p$-adic analytic functions on $\mathfrak{U}_0$. In this context, we mean by a {\it $p$-adic analytic family}  
the infinite collection 
of holomorphic Siegel modular forms $\{{\bf F}(P) \in \mathscr{M}_{\kappa}(\Gamma_0(p))^{(g)}\}$ 
parametrized by varying arithmetic points $P =(\kappa, \omega^{\kappa})\in \mathfrak{U}_0$. 
In addition, by identifying such $P \in \mathfrak{U}_0$ 
with the element $(\kappa, \kappa\!\!\pmod{p-1})$ 
in Serre's $p$-adic weight space 
\[
\mathbb{Z}_p \times \mathbb{Z}/(p-1)\mathbb{Z} \simeq \varprojlim_{m \ge 1} \mathbb{Z}/(p-1)p^{m-1}\mathbb{Z},
\]
we may also regard 
$\{{\bf F}(P)\}$ as a usual $p$-adic analytic family 
parametrized by varying integers $\kappa \ge \kappa_0
$ with 
${\kappa \equiv {\kappa_0} \pmod{(p-1)p^{m-1}}}$ 
for some sufficiently large $m \ge 1$.

\begin{rem}
On purpose to 
construct a $\Lambda$-adic Siegel modular form ${\bf F}
$, we should take a $P_0=({\kappa_0}, \omega^{{\kappa_0}}) \in \mathfrak{X}_{\rm alg}(R)
$ having a smallest possible ${\kappa_0} \in \mathbb{Z}
$ such that 
${\bf F}(P_0)
$ coincides with an actual holomorphic Siegel modular form $F_{\kappa_0} \in \mathscr{M}_{{\kappa_0}}(\Gamma_0(p))^{(g)}$. 
For this reason, the condition ${\kappa_0} > g+1$ set out above, will be practically required 
in the subsequent arguments for the Duke-Imamo{\=g}lu lifting and the holomorphic Siegel Eisenstein series. Indeed, this is neither more nor less than the condition of holomorphy of the Siegel Eisenstein series of genus $g$.  
However, in the same context, it should better to assume a more general condition $\kappa_0 \ge g+1$, which is evident form the fact that the smallest possible weight for holomorphic Siegel modular forms of genus $g$ occurring in the de Rham cohomology is $g+1$. 
\end{rem}


\section{Cuspidal $\Lambda$-adic modular forms of genus 1}

In this section, we review Hida's construction of a cuspidal $\Lambda$-adic modular form of genus $1$ and of level $1$, and the $\Lambda$-adic Shintani lifting due to Stevens. 

\subsection{Hida's universal ordinary $p$-stabilized newforms}

For each integer $r \ge 1$, let $X_1(p^r)=\Gamma_1(p^r)\backslash \mathfrak{H}_1 \cup \mathbb{P}^{1}(\mathbb{Q})$ be the compactified modular curve, and $V_r=H^1(X_1(p^r),\mathbb{Z}_p)$ the simplicial cohomology group of $X_1(p^r)$ with values in $\mathbb{Z}_p$. It is well-known that $V_r$ is canonically isomorphic to the parabolic cohomology group $H_{\rm par}^1(\Gamma_1(p^r),\mathbb{Z}_p) \subseteq H^1(\Gamma_1(p^m),\mathbb{Z}_p)$, which is defined to be the image of the compact-support cohomology group 
under the natural map (cf. \cite{Shim94a}).  
We denote the {\it abstract $\Lambda$-adic Hecke algebra} of tame level $1$ by the free polynomial algebra 
\[
\mathbb{T}:=\Lambda_1[{\rm T}_m\,|\,1 \le m \in \mathbb{Z}]
\]
generated by ${\rm T}_m$ over $\Lambda_1$. Since $\mathbb{T} \simeq \mathbb{Z}_p[{\rm T}_m, \mathbb{Z}_p^{\times}]$, a natural action of $\mathbb{T}$ on $V_r$ is defined by regarding the generator ${\rm T}_m$ acts via the $m$-th Hecke correspondence and elements of $\mathbb{Z}_p^{\times}$ act via the usual Nebentypus actions. 
For each pair of positive integers $(r_1,r_2)$ with $r_1 \geq r_2 $, the natural inclusion $\Gamma_1(p^{r_1}) \hookrightarrow
 \Gamma_1(p^{r_2})$ induces the corestriction $V_{r_1} \to V_{r_2}$, which commutes with the actoin of $\mathbb{T}$. Hence we may consider the projective limit 
\[
V_{\infty} := \varprojlim_{r \ge 1}
 V_r
\]
with a $\mathbb{T}$-algebra structure. We denote by $V_{\infty}^{\rm ord}$ the direct factor of $V_{\infty}$
cut out by Hida's ordinary idempotent $e_{\rm ord}=\displaystyle\lim_{m \to \infty} {\rm T}_p^{m!}$, that is, 
\[
V_{\infty}^{\rm ord}=e_{\rm ord} \cdot V_{\infty}, 
\]
on which ${\rm T}_p$ acts invertibly. We note that $V_{\infty}^{\rm ord}$ is a $\Lambda$-algebra free of finite rank (cf. Theorem 3.1 in \cite{Hid86a}). 
Moreover, let $\mathcal{L}={\rm Frac}(\Lambda)$ be the fractional field of $\Lambda$, Hida constructed an idempotent $e_{\rm prim}$ in the image of $\mathbb{T}\otimes_{\Lambda} \mathcal{L}$ in ${\rm End}_{\mathcal{L}} (V_{\infty}^{\rm ord} \otimes_{\Lambda} \mathcal{L})$, which can be regarded as an analogue to the projection to the space of primitive Hecke eigenforms in Atkin-Lehner theory (cf. \cite{Hid86a}, pp.250, 252). 
Then we define the {\it universal ordinary parabolic cohomology group} of tame level $1$ by 
the $\mathbb{T}$-algebra
\[
\mathbb{V}^{\rm ord} := V_{\infty}^{\rm ord} \cap e_{\rm prim} (V_{\infty}^{\rm ord} \otimes_{\Lambda} \mathcal{L}), 
\]
which is a reflexive $\Lambda$-algebra of finite rank and is consequently locally free of finite rank over $\Lambda$. 
Then the {\it universal $p$-ordinary Hecke algebra} of tame level $1$ is defined to be 
the image $R^{\rm ord}$ of $\mathbb{T}$ in ${\rm End}_{\Lambda_1}(\mathbb{V}^{\rm ord})$ under the homomorphism 
\[
h: \mathbb{T} \longrightarrow
{\rm End}_{\Lambda_1}(\mathbb{V}^{\rm ord}). 
\]
We note that $R^{\rm ord}$ is naturally equipped with a formal $q$-expansion 
\begin{equation}
{\bf f}_{\rm ord}=\sum_{m=1}^{\infty} {\bf a}_m\, q^m \in R^{\rm ord}[[q]], \hspace*{5mm} 
{\bf a}_m=h({\rm T}_m), 
\end{equation}
which is called the {\it universal $p$-stabilized ordinary form} of tame level $1$. 

\vspace*{3mm}
Next, we introduce the global data to be interpolated by ${\bf f}_{\rm ord}$ as follows: 
\begin{defn}[ordinary $p$-stabilized newforms]
For given integers $\kappa \ge 2$ and $r \ge 1$, a Hecke eigenform $f_\kappa^{*} \in \mathscr{S}_{\kappa}(\Gamma_1(p^r))^{(1)}$ 
is called an {\it ordinary $p$-stabilized newform} if one of the following conditions holds true: 
\begin{itemize}
\item[(i)] $f_\kappa^*$ is a $p$-ordinary Hecke eigenform in $\mathscr{S}_{\kappa}^{\rm new}(\Gamma_1(p^r))^{(1)}$, where we denote by $\mathscr{S}_{\kappa}^{\rm new}(\Gamma_1(p^r))^{(1)}$ the subspace consisting of all newforms in $\mathscr{S}_{\kappa}(\Gamma_1(p^r))^{(1)}$. 

\item[(ii)] If $r=1$, then there exists a normalized ordinary Hecke eigenform $f_\kappa=\sum_{m=1}^{\infty} a_m(f_\kappa) q^m \in \mathscr{S}_{\kappa}({\rm SL}_2(\mathbb{Z}))
$ 
such that  
\[
f_\kappa^{*}(z)=f_\kappa(z) - \beta_p(f_{\kappa}) f_\kappa(pz)\,\,\,\, (z \in \mathfrak{H}_1), 
\]
where 
$\beta_p(f_\kappa)$ is the non-unit root of 
$X^2 - a_p(f_\kappa) X + p^{\kappa-1} =0$. 
\end{itemize}
\end{defn}

\begin{rem}
It follows from the definition that ordinary $p$-stabilized newforms 
are literally $p$-ordinary Hecke eigenforms. 
Indeed, the assertion is trivial in the case of (i).  
If $f_{\kappa}^{*} \in \mathscr{S}_{\kappa}(\Gamma_1(p))^{(1)}$ is taken as in (ii), then for each prime $l$, we have 
\[
a_l(f_{\kappa}^{*})=
\left\{
\begin{array}{cl}
a_l(f_{\kappa}) & \textrm{ if }l \ne p, \\[1mm]
\alpha_p(f_{\kappa}) & \textrm{ if }l = p,
\end{array}\right.
\]
where $\alpha_p(f_{\kappa})$ denotes the $p$-adic unit appearing in the factorization  
\[
X^2 - a_p(f_{\kappa}) X + p^{\kappa-1} =(X-\alpha_p(f_{\kappa}))(X-\beta_p(f_{\kappa})). 
\]
Hence we have \[L(s,\, f_{\kappa}^{*})=L^{(p)}(s,\,f_{\kappa}) \cdot(1-\alpha_p(f_{\kappa})p^{-s})^{-1},\] 
where $L^{(p)}(s,\,f_{\kappa})$ denotes Hecke's $L$-function of $f_{\kappa}$ with the 
Euler factor at $p$ removed. 
For a given $p$-ordinary Hecke eigenform in $\mathscr{S}_{\kappa}({\rm SL}_2(\mathbb{Z}))$, this type of $p$-adic normalization process selecting half the Euler factor at $p$ is called the {\it ordinary $p$-stabilization}. 
However, we should note that each 
ordinary $p$-stabilized newform $f_{\kappa}^{*} \in 
\mathscr{S}_{\kappa}(\Gamma_1(p))^{(1)}$ 
is actually an oldform except for $\kappa=2$. 
Indeed, if $f_{\kappa}^{*} \in \mathscr{S}_{\kappa}^{\rm new}(\Gamma_1(p))^{(1)}$, 
then we have 
\[
|a_p(f_{\kappa}^{*})|
=p^{\kappa/2-1}
\]
(cf. Theorem 4.6.17 (ii) in \cite{Miy06}). 
If $\kappa >2$, this contradicts the assumption that $f_{\kappa}^{*}$ is ordinary at $p$. 
Hence we summarize that each ordinary $p$-stabilized newform 
is in fact a $p$-ordinary Hecke eigenform occurring in either 
$\mathscr{S}_{2}^{\rm new}(\Gamma_1(p))^{(1)}$, 
$\mathscr{S}_{\kappa}^{\rm old}(\Gamma_1(p))^{(1)}:= \mathscr{S}_{\kappa}(\Gamma_1(p))^{(1)} -  \mathscr{S}_{\kappa}^{\rm new}(\Gamma_1(p))^{(1)}$ with $\kappa > 2$, or $\mathscr{S}_{\kappa}^{\rm new}(\Gamma_1(p^r))^{(1)}$ with some $\kappa \ge 2$ and $r > 1$. 
\end{rem}

Then the following theorem has been established by Hida: 

\begin{hidathm}[cf. 
Theorem 2.6 in \cite{G-S93}]
Let $r$ be a fixed positive integer, and ${\bf f}_{\rm ord}=\sum_{m = 1}^{\infty} \mathbf{a}_m q^m \in R^{\rm ord}[[q]]$ 
the universal ordinary $p$-stabilized form of tame level $1$ introduced above. 
Then for each $P \in \mathfrak{X}_{\rm alg}(R^{\rm ord})$, the specialization 
\[
{{\bf f}_{\rm ord}(P)=
\sum_{m = 1}^{\infty} \mathbf{a}_m(P) 
q^m \in \overline{\mathbb{Q}}_p[[q]]}
\] 
induces a one-to-one correspondence 
\begin{eqnarray*}
\lefteqn{
\left\{
P=(\kappa, \varepsilon) \in 
\mathfrak{X}_{\rm alg}(R^{\rm ord}) \left|\, 
2 \le \kappa \in \mathbb{Z},\,\,
\varepsilon : \mathbb{Z}_p^{\times} \to \overline{\mathbb{Q}}_p^{\times}\,(\textrm{finite character})
\right.\right\}
} \\
&\stackrel{1:1}{\longleftrightarrow}&
\left\{
f_{\kappa}^{*} \in \mathscr{S}_{\kappa}(\Gamma_0(p^r),\varepsilon \omega^{-\kappa}
)^{(1)} \left|
\begin{array}{c}
\textrm{ordinary }p\textrm{-stabilized newform} \\
\textrm{of tame level }1
\end{array}\right.
\right\}.
\end{eqnarray*}
\end{hidathm}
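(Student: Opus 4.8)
The plan is to derive the asserted bijection from Hida's vertical control theorem for the ordinary part of the parabolic cohomology of the tower $\{X_1(p^r)\}_{r\ge1}$, combined with the Eichler--Shimura isomorphism; this is essentially the way the statement is packaged in \cite[Theorem 2.6]{G-S93}, so I would reproduce that argument in a few steps. Fix $P=(\kappa,\varepsilon)\in\mathfrak{X}_{\rm alg}(R^{\rm ord})$, so that the restriction of $P$ to $\Lambda$ is $\gamma\mapsto\gamma^{\kappa}$ with $\kappa\ge2$ and its restriction to $\Lambda_1$ is $y\mapsto y^{\kappa}\varepsilon(y)$. The technical heart is Hida's theorem that $V_\infty^{\rm ord}$ is free of finite rank over $\Lambda$ and that, for each such $P$ (with $\varepsilon$ of conductor dividing $p^r$), there is a canonical Hecke-equivariant control isomorphism
\[
V_\infty^{\rm ord}\otimes_{\Lambda_1,P}\overline{\mathbb{Q}}_p\;\xrightarrow{\ \sim\ }\;e_{\rm ord}\cdot H^1_{\rm par}\bigl(\Gamma_1(p^{r}),\,L_{\kappa}(\overline{\mathbb{Q}}_p)\bigr)
\]
onto weight-$\kappa$ ordinary parabolic cohomology at the finite level $\Gamma_1(p^r)$, compatible with all the ${\rm T}_m$ and with the diamond operators. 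Applying the idempotent $e_{\rm prim}$ then identifies $\mathbb{V}^{\rm ord}\otimes_{\Lambda_1,P}\overline{\mathbb{Q}}_p$ with the span, inside weight-$\kappa$ ordinary parabolic cohomology, of the Hecke eigensystems attached to the ordinary $p$-stabilized newforms of weight $\kappa$ and Nebentypus $\varepsilon\omega^{-\kappa}$ of tame level $1$; via Eichler--Shimura these are the eigensystems of the corresponding holomorphic cusp forms in $\mathscr{S}_\kappa(\Gamma_0(p^r),\varepsilon\omega^{-\kappa})^{(1)}$.

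Granting this, the map is built as follows. An arithmetic point $P$ is in particular a $\Lambda_1$-algebra homomorphism $R^{\rm ord}\to\overline{\mathbb{Q}}_p$; since $R^{\rm ord}$ acts faithfully on $\mathbb{V}^{\rm ord}$, $P$ cuts out a common Hecke eigenline in $\mathbb{V}^{\rm ord}\otimes_{\Lambda_1,P}\overline{\mathbb{Q}}_p$, hence by the previous paragraph a Hecke eigensystem $\{{\bf a}_m(P)\}_m=\{P(h({\rm T}_m))\}_m$ that agrees with that of some ordinary $p$-stabilized newform $f_\kappa^{*}\in\mathscr{S}_\kappa(\Gamma_0(p^r),\varepsilon\omega^{-\kappa})^{(1)}$ of tame level $1$. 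Since $f_\kappa^{*}$ is normalized, its $q$-expansion is determined by its Hecke eigenvalues, so ${\bf f}_{\rm ord}(P)=f_\kappa^{*}$ as formal $q$-expansions; in particular ${\bf f}_{\rm ord}(P)$ is the Fourier expansion of an honest cusp form of the prescribed weight, level and Nebentypus.

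For injectivity, if ${\bf f}_{\rm ord}(P)={\bf f}_{\rm ord}(P')$ then $P$ and $P'$ agree on every ${\rm T}_m$, hence on $R^{\rm ord}$ as a ring; moreover the common value is an ordinary $p$-stabilized newform whose weight $\kappa$ and Nebentypus $\varepsilon\omega^{-\kappa}$ are intrinsic to it, and $(\kappa,\,\varepsilon\omega^{-\kappa})$ determines $\varepsilon$ and hence the restriction of $P$ to $\Lambda_1$, so $P=P'$. For surjectivity, starting from an ordinary $p$-stabilized newform $f_\kappa^{*}$ of tame level $1$, weight $\kappa$ and Nebentypus $\varepsilon\omega^{-\kappa}$, its Hecke eigensystem defines a homomorphism $\mathbb{T}\to\overline{\mathbb{Q}}_p$; ordinarity (the $U_p$-eigenvalue is a $p$-adic unit) makes it factor through $e_{\rm ord}$, primitivity of tame level $1$ through $e_{\rm prim}$, so it descends to a point $P\in\mathfrak{X}(R^{\rm ord})$, and reading off the action of $\Lambda_1\subset\mathbb{T}$ (diamond operators act by $\varepsilon\omega^{-\kappa}$ and $\gamma$ by $\gamma^{\kappa}$) shows that $P$ is arithmetic of signature $(\kappa,\varepsilon)$; as before ${\bf f}_{\rm ord}(P)=f_\kappa^{*}$.

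The main obstacle is the control isomorphism invoked in the first step: the $\Lambda$-freeness of $V_\infty^{\rm ord}$ together with the exact identification of its specialization at an arithmetic point of weight $\kappa\ge2$ with the full ordinary weight-$\kappa$ parabolic cohomology at the corresponding finite level --- no more and no less. This is the vertical control theorem at the foundation of Hida's theory (cf. \cite{Hid86a}), and everything else here is essentially formal bookkeeping: the idempotent $e_{\rm prim}$ to isolate the tame-level-$1$ primitive part, the Eichler--Shimura comparison to pass from cohomology to holomorphic cusp forms, and the normalization $a_1(f_\kappa^{*})=1$ to pin down $q$-expansions from eigenvalues.
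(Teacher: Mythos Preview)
The paper does not give its own proof of this statement: Theorem~II is quoted (with the attribution ``cf.\ Theorem~2.6 in \cite{G-S93}'') as an input from Hida's theory and is used as a black box in the rest of \S3.1. So there is nothing in the paper to compare your argument against.

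That said, your outline is the standard route to this result and is essentially correct. The core step is indeed Hida's vertical control theorem (freeness of $V_\infty^{\rm ord}$ over $\Lambda$ together with the exact identification of its specialization at an arithmetic point with ordinary parabolic cohomology at finite level), followed by Eichler--Shimura to pass to holomorphic cusp forms, the idempotent $e_{\rm prim}$ to single out the primitive part, and the normalization $a_1=1$ to recover the $q$-expansion from the Hecke eigensystem. Your injectivity and surjectivity arguments are the expected ones. If you wanted to make this self-contained you would also need to justify that $R^{\rm ord}$ is reduced (so that distinct $\overline{\mathbb{Q}}_p$-points really correspond to distinct Hecke eigensystems) and that every ordinary $p$-stabilized newform of tame level~$1$ genuinely lies in the image of $e_{\rm prim}$; both are part of Hida's package in \cite{Hid86a}.
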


\begin{rem}
In most generality, for each positive integer $N$ prime to $p$,  
Hida constructed the universal ordinary $p$-stabilized newform of tame level $N$ whose specialization 
gives holomorphic cusp forms on $\Gamma_0(N p^r)$ with $r \ge 1$.  
\end{rem}

By applying Theorem II 
for $r=1$,  
each $P =(2k, \omega^{2k}) \in \mathfrak{X}_{\rm alg}(R^{\rm ord})$ 
corresponds to an ordinary $p$-stabilized newform ${\bf f}_{\rm ord}(P)=
f_{2k}^{*} \in \mathscr{S}_{2k}(\Gamma_0(p))^{(1)}$ associated with 
a $p$-ordinary normalized Hecke eigenform $f_{2k} \in \mathscr{S}_{2k}({\rm SL}_2(\mathbb{Z}))$ via the ordinary $p$-stabilization. However, 
we note that $\dim_{\mathbb{C}} \mathscr{S}_{2k}({\rm SL}_2(\mathbb{Z}))=0$ for $k <6$, and hence 
${\bf f}_{\rm ord}(P)$ vanishes identically at $\{P =(2k, \omega^{2k}) \in \mathfrak{X}_{\rm alg}(R^{\rm ord})\,|\,1 < k< 6\}$. 
Therefore, for a fixed $P_0=(2k_0, \omega^{2k_0})\in \mathfrak{X}_{\rm alg}(R^{\rm ord})$ with $k_0 \ge 6$, we may regard ${\bf f}_{\rm ord} \in R^{\rm ord}[[q]]$ as a $\Lambda$-adic cusp form of genus $1$, 
and 
we consequently obtain a $p$-adic analytic family of ordinary $p$-stabilized newforms
$\{{\bf f}_{\rm ord}(P)=f_{2k}^{*}\}$ 
parametrized by 
$P =(2k, \omega^{2k})\in \mathfrak{X}_{\rm alg}(R^{\rm ord})$ with $k_0 \le k \in \mathbb{Z}$.  

\vspace*{3mm}
In our setting, the choice of $P_0 = (2k_0, \omega^{2k_0})$
having the smallest possible weight $
2k_0$ is obviously taken as $k_0=6$, that is, $P_0$ corresponds to Ramanujan's $\Delta$-function \[
f_{12}=q\prod_{m=1}^{\infty}(1 - q^m)^{24}=\sum_{m=1}^{\infty} \tau(m) q^m
 \in \mathscr{S}_{12}({\rm SL}_2(\mathbb{Z})). 
 \]
In addition, we may choose any analytic neighborhood $\mathfrak{U}_0$ of such $P_0$ in $\mathfrak{X}(R^{\rm ord})$. 
Since we will apply some 
lifting for 
${\bf f}_{\rm ord}$ in the sequel, the choices of $k_0$, 
$P_0$ and $\mathfrak{U}_0$ may vary depending on the intended use. 
For readers' convenience, 
we present a list of ordinary primes with respect to the unique normalized Hecke eigenforms $f_{2k_0} \in \mathscr{S}_{2k_0}({\rm SL}_2(\mathbb{Z}))$ with $k_0 \in \{6,\,8,\,9,\,10,\,11,\,13\}$, that is, rational primes at which $f_{2k_0}$ is ordinary:  
\begin{table}[htbp]
\begin{tabular}{|c|c|}
\hline
$k_0$  & $p$ \\
\hline
6 & $11\le p \le 2399
,\,2417 \le p \le 19597$ \\
\hline
8 & $17 \le p \le 53
, \, 61 \le p \le 15269
, \, 15277 \le p \le 19597$ \\
\hline
9 & $17 \le p \le 14879$ \\
\hline
10 & $19 \le p \le 3361
,\, 3373 \le p \le 9973$ \\
\hline
11 & $p=11,\, 23 \le p \le 9973$ \\
\hline
13 & $29 \le p \le 9973$ \\
\hline 
\end{tabular}
\caption{Ordinary primes for $f_{2k_0}$}
\end{table}

\vspace*{-4mm}
For the smallest ordinary prime $p=11$ with respect to 
$f_{12}$, we give a numerical example of another components of the Hida family: 

\begin{ex}
Since $6+(11-1)=16$, we focus on the 
2-dimensional space $\mathscr{S}_{32}({\rm SL}_2(\mathbb{Z}))$.  
Then we may take a normalized Hecke eigenform $f_{32} \in \mathscr{S}_{32}({\rm SL}_2(\mathbb{Z}))$ 
determined uniquely up to Galois conjugation
such that
\begin{eqnarray*}
f_{32} 
&=&q + x q^2 + (432x + 50220)q^3 + (39960x + 87866368)q^4 \\
&& - (1418560x- 18647219790)q^5 + (17312940x + 965671206912)q^6 \\
&& -(71928864x - 16565902491320)q^7 
-(462815680x - 89324586639360)
  q^8  \\
&& + (7500885120x - 200500912849563) q^9 \\
&&- (38038437810x + 3170978118696960)q^{10} \\
&&+ (29000909200x - 4470615038375388)q^{11} +
\cdots 
\in K_{32}[[q]],
\end{eqnarray*}
where $K_{32}$ denotes the real quadratic field \[
\mathbb{Q}[x]/(x^2 - 39960x - 2235350016). 
\] 
We easily check that the norm of the difference $a_{11}(f_{12}) - a_{11}(f_{32})$ is factored into 
\[
2^8 \cdot 3^3 \cdot 5^4 \cdot 11 \cdot 368789 \cdot 99988481 \cdot 7376353157. 
\]
Therefore we obtain a congruence between $f_{12}$ and $f_{32}$ modulo a prime ideal of the ring $\mathcal{O}_{K_{32}}$ of integers in $K_{32}$ 
lying over $11$, which implies that their ordinary $11$-stabilizations 
$f_{12}^{*}$ and $f_{32}^{*}$ 
reside both in the Hida family for $p=11$.  
\end{ex}

\vspace*{4mm}
\subsection{$\Lambda$-adic Shintani lifting}
As mentioned in the previous \S\S2.1, we reveiw the construction of 
an inverse correspondence of the 
Shimura correspondence in the sense of Shintani \cite{Shin75} and Kohnen \cite{Koh85}. Moreover, we introduce a similar lifting for the universal ordinary $p$-stabilized newform ${\bf f}_{\rm ord}$, which was constructed by Stevens \cite{Ste94}. \\

For simplicity, suppose that $N \ge 1$ is odd squarefree and $k \ge 2$. 
Let $\mathfrak{D}$ be an integer with $\mathfrak{D} \equiv 0,\,1 \pmod{4}$ 
and $(-1)^k \mathfrak{D}>0$.  
We denote by $\mathcal{L}(\mathfrak{D})$ the set of all primitive matrices $Q \in {\rm Sym}_2^{*}(\mathbb{Z})$ 
with discriminant $
-\det(2Q)=\mathfrak{D}$. 
We may naturally identify each element $Q=\left(\begin{smallmatrix}
a & b/2 \\
b/2 & c
\end{smallmatrix}\right) \in \mathcal{L}(\mathfrak{D})$
with an integral binary quadratic form $
Q(x,y)=ax^2 +bxy+cy^2$ with $\gcd(a,b,c)=1$. 
For simplicity, 
we write $Q=[a,b,c]$ instead of $\left(\begin{smallmatrix}
a & b/2 \\
b/2 & c
\end{smallmatrix}\right)$ in the sequel. 
We also denote by $\mathcal{L}_N(\mathfrak{D})$ the subset of $\mathcal{L}(\mathfrak{D})$ consisting of all elements $[a,b,c]$ with $a \equiv 0 \pmod{N}$. We easily see that if $\mathfrak{D} \equiv 0 \pmod{N}$, 
then 
\[
\mathcal{L}_N(\mathfrak{D})=\{\,[a,b,c] \in \mathcal{L}(\mathfrak{D})\,|\, a \equiv b \equiv 0 \!\!\!\pmod{N}\,\}. 
\]
We note that the congruence subgroup $\Gamma_0(N) \subseteq {\rm SL}_2(\mathbb{Z})$ acts on $\mathcal{L}_N(\mathfrak{D})$ as 
\[
\begin{array}{ccl}
\mathcal{L}_N(\mathfrak{D}) 
\times 
\Gamma_0(N)
& \longrightarrow & \mathcal{L}_N(\mathfrak{D}) \\[2mm]
(Q, M) & \longmapsto & Q \circ M:=
{}^t M Q M,  
\end{array}
\]
and we easily see that $
\mathcal{L}_N(\mathfrak{D})/\Gamma_0(N)$ is finite. 
For each $Q=[a,b,c] \in \mathcal{L}_N(\mathfrak{D})$, we associate it with a geodesic cycle $C_Q$ 
in $\Gamma_0(N)\backslash \mathfrak{H}_1$ that is defined as the image of the semicircle $$az^2+b{\rm Re}(z)+c=0$$ oriented either from left to right (resp. from right to left) if $a>0$ (resp. $a<0$) or from $-c/b$ to $\sqrt{-1} \infty$ according as $a\ne0$ or $a=0$. 
Then for each $f \in \mathscr{S}_{2k}(\Gamma_0(N))$, we define a cycle integral associated with $f$ by
\[
r_{Q}(f):=\displaystyle\int_{C_Q
} f(z) 
Q(z,1)^{k-1} dz. 
\]
Then the following theorem is given by Shintani and Kohnen: 

\begin{shintanithm}[cf. Theorem 2 in \cite{Koh81}]
Let $\mathfrak{d}$ be a fixed fundamental discriminant with $(-1)^k \mathfrak{d} >0$. 
For each $f \in \mathscr{S}_{2k}(\Gamma_0(N))$, put  
\begin{eqnarray*}
\lefteqn{
\vartheta_{\mathfrak{d}}(f):=\sum_{\scriptstyle m \ge 1, \atop {\scriptstyle (-1)^k m \equiv 0,1 \hspace{-2mm}\pmod{4}}} 
\!\!\left\{
\sum_{0<d | N} \mu(d) \left({\,\mathfrak{d}\,\over d}\right) d^{k-1} \right.} \\
&&\hspace*{35mm}\times\left.
\sum_{Q \in \mathcal{L}_{(Nd)}(\mathfrak{d}md^2)/\Gamma_0(Nd)} \chi_{\mathfrak{d}}(Q)\, r_{Q}(f)
\right\} q^m,
\end{eqnarray*}
where 
$\chi_{\mathfrak{d}}
$ denotes the generalized genus character associated with ${\mathfrak{d}_0}$ {\rm (cf.} \cite{GKZ87}{\rm ).} 
Then we have $\vartheta_{\mathfrak{d}}(f) \in \mathscr{S}_{k+1/2}^{+}(\Gamma_0(4N))^{(1)}$. Moreover, 
the mapping 
\[
\vartheta_{\mathfrak{d}} : \mathscr{S}_{2k}(\Gamma_0(N))^{(1)} \longrightarrow \mathscr{S}_{k+1/2}^{+}(\Gamma_0(4N))^{(1)} 
\]
is Hecke equivariant 
in the sense of the Shimura correspondence.  
\end{shintanithm}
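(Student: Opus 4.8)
The plan is to realise $\vartheta_{\mathfrak d}$ as a theta lifting and to extract both the modularity properties and the explicit cycle-integral formula from the transformation behaviour of a single kernel function; this is the method of Shintani~\cite{Shin75} for $N=1$, together with the refinements of Kohnen~\cite{Koh81} that produce the plus-space condition and accommodate squarefree level $N$. First I would introduce a kernel $\Omega_{k,N,\mathfrak d}(\tau,z)$ on $\mathfrak H_1\times\mathfrak H_1$ built as a (suitably normalised, convergent) sum over the integral binary quadratic forms $Q=[a,b,c]\in\mathcal L_{Nd}(\,\cdot\,)$ carrying the polynomial factor $Q(z,1)^{k-1}$ and a Gaussian in $Q(z,1)/\mathrm{Im}(z)$, weighted by $e\bigl(\tfrac14(b^2-4ac)\,\tau\bigr)$ and by the genus character $\chi_{\mathfrak d}(Q)$, with the outer sum $\sum_{d\mid N}\mu(d)\bigl(\tfrac{\mathfrak d}{d}\bigr)d^{k-1}$ incorporated. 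One then sets $\vartheta_{\mathfrak d}(f)(\tau):=\langle f(\cdot),\,\Omega_{k,N,\mathfrak d}(\tau,\cdot)\rangle_{\Gamma_0(N)}$, the Petersson product in the $z$-variable; this converges because $f$ is a cusp form and the Gaussian factor controls the forms with $a=0$.

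The first block of the argument is the transformation law of $\Omega_{k,N,\mathfrak d}$ in each variable. In the $z$-variable the substitution $Q\mapsto{}^tMQM$ for $M\in\Gamma_0(N)$, combined with the $\Gamma_0(N)$-invariance of $\chi_{\mathfrak d}$, shows that $\Omega_{k,N,\mathfrak d}(\tau,\cdot)$ behaves like a weight-$2k$ form, so the Petersson integral is well defined and independent of the choice of fundamental domain; the same substitution shows $r_{Q\circ M}(f)=r_Q(f)$, which is what makes the inner sums over $\mathcal L_{Nd}(\mathfrak d md^2)/\Gamma_0(Nd)$ in the statement meaningful. In the $\tau$-variable, Poisson summation over $(a,b,c)$ — equivalently the classical theta multiplier computation — shows that $\Omega_{k,N,\mathfrak d}(\cdot,z)$ transforms with weight $k+1/2$ on $\Gamma_0(4N)$ through the automorphy factor $j(M,\tau)^{2k+1}$, while the congruence $a\equiv 0\pmod N$ built into $\mathcal L_N$ forces the Kohnen condition that the $m$-th Fourier coefficient vanishes unless $(-1)^k m\equiv 0,1\pmod 4$. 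Hence $\vartheta_{\mathfrak d}(f)\in\mathscr S_{k+1/2}^{+}(\Gamma_0(4N))^{(1)}$, cuspidality being inherited because the contributions of the cusps to $\Omega_{k,N,\mathfrak d}$ pair to zero against the cusp form $f$.

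Next I would compute the Fourier expansion. Expanding $\Omega_{k,N,\mathfrak d}(\tau,z)=\sum_m\omega_m(z)\,q^m$ and unfolding the Petersson integral, the $m$-th coefficient of $\vartheta_{\mathfrak d}(f)$ collapses to a finite sum over $\Gamma_0(Nd)$-classes of forms $Q$ of discriminant $\mathfrak d md^2$ of an integral of $f(z)\,Q(z,1)^{k-1}$ against a profile concentrated along the geodesic $C_Q$; a Stokes/contour-shift argument, using the holomorphy of $f$ and its decay at the endpoints of $C_Q$, identifies this integral with the cycle integral $r_Q(f)$, and reassembling yields exactly the displayed formula. Finally, Hecke equivariance reduces to a lattice computation: for each prime $\ell\nmid 4N$ one checks that the Hecke operator $T_\ell$ acting on the $z$-variable of the kernel coincides with the operator $T_{\ell^2}$ (in the Shimura normalisation) acting on the $\tau$-variable; combined with the self-adjointness of Hecke operators under the Petersson product, this gives $\vartheta_{\mathfrak d}(T_\ell f)=T_{\ell^2}\,\vartheta_{\mathfrak d}(f)$, i.e.\ Hecke equivariance in the sense of the Shimura correspondence. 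The main obstacle is the first block: pinning down the precise half-integral weight multiplier of $\Omega_{k,N,\mathfrak d}$ and the plus-space condition requires careful control of the theta multiplier system and of the genus-character twist at the primes dividing $N$, and the passage from the unfolded integral to $r_Q(f)$ requires the delicate Stokes argument; these are precisely the points where the treatment of Kohnen~\cite{Koh81} is needed beyond Shintani's original construction.
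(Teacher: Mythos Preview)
The paper does not prove this statement at all: Theorem~III is quoted as a known result with the attribution ``cf.\ Theorem~2 in \cite{Koh81}'' and is used as a black box in the construction of the $\Lambda$-adic Shintani lifting. There is therefore no proof in the paper to compare your proposal against.

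That said, your outline is broadly the correct strategy---realising $\vartheta_{\mathfrak d}$ as a Petersson pairing against a two-variable kernel, checking the automorphy in each variable, unfolding to obtain the cycle integrals, and verifying Hecke equivariance on the kernel---and it is indeed the line taken in \cite{Shin75} and \cite{Koh81}. One point worth flagging: the Gaussian profile you invoke is characteristic of Shintani's original non-holomorphic theta kernel, whereas Kohnen's refinement in \cite{Koh81} (and in \cite{Koh85}, which the paper also cites for the formula~(11)) works instead with a \emph{holomorphic} kernel built directly as a Poincar\'e-type series over quadratic forms, from which the plus-space condition and the cycle-integral Fourier coefficients fall out without a separate Stokes step. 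If you intend to reproduce the argument in full, it is Kohnen's holomorphic-kernel version you should follow, since that is what yields the exact formula in the statement with the M\"obius sum over $d\mid N$ and the plus-space landing.
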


\vspace*{2mm}
This type of lifting from integral weight to half-integral weight was firstly introduced by Shintani \cite{Shin75}, and afterwards was reformulated by Kohnen \cite{Koh81}. According to the custom, we refer to $\vartheta_{\mathfrak{d}}$ as the {\it $\mathfrak{d}$-th Shintani lifting}. \\

For a given normalized Hecke eigenform $f \in \mathscr{S}_{2k}(\Gamma_0(N))$, we note that all of 
the Shintani lifting $\vartheta_{\mathfrak{d}}(f)$ 
give rise to Hecke eigenforms in $\mathscr{S}_{k+1/2}^{+}(\Gamma_0(4N))^{(1)}$ corresponding to $f$ via the Shimura correspondence, however they differ from each other by the normalization 
of the Fourier coefficients depending on $\mathfrak{d}$. 
Indeed, suppose for simplicity that $N=1$, $f \in \mathscr{S}_{2k}({\rm SL}_2(\mathbb{Z}))$ is a normalized Hecke eigenform and $h \in \mathscr{S}_{k+1/2}^{+}(\Gamma_0(4))^{(1)}$ a corresponding Hecke eigenform via the Shimura correspondence as in (2). Then 
for each integer $m \ge 1$ with $(-1)^k m \equiv 0,\,1 \pmod{4}$, 
we have 
\begin{eqnarray}
{c_{|\mathfrak{d}|}(h) c_m(h) \over \|h\|^2}&=&{(-1)^{[k/2]} 2^k \over \|f\|^2} 
\!\!\sum_{Q \in \mathcal{L}(\mathfrak{d}m)/{\rm SL}_2(\mathbb{Z})} \!\!\chi_{\mathfrak{d}}(Q)\, r_{Q}(f) \\
&=& {(-1)^{[k/2]} 2^k \over \|f\|^2} c_m(\vartheta_{\mathfrak{d}}(f)) \nonumber
\end{eqnarray}
(cf. Theorem 3 in \cite{Koh85}). 
Namely, the choice of $\mathfrak{d}$ determines the normalization datum. 

\begin{rem}
We note that the $\mathfrak{d}$-th Shintani lifting $\vartheta_{\mathfrak{d}}(f)$ admits a nice algebraic property. Indeed, by the equations (8) and (11), we have 
\[
c_{|\mathfrak{d}|}(\vartheta_{\mathfrak{d}}(f))=(-1)^{[k/2]}{(k-1)! \over (2\pi)^{k}} \cdot |\mathfrak{d}|^{k-1/2} L_{\mathfrak{d}}(k,\,f). 
\]
On the other hand, by Manin \cite{Man73} and Shimura \cite{Shim82}, we may associate $f$ with two complex periods $\Omega^{+}$ and $\Omega^{-}$ such that for each critical point $s \in \mathbb{Z}$ with $0 < s < 2k$, the special value 
$\pi^{-s}L_{\mathfrak{d}}(s,\,f)/\Omega^{\epsilon}$ resides in the field 
$
K_{f}(\sqrt{|\mathfrak{d}|}),$
where $\epsilon \in \{\pm\}$ is the signature of 
$(-1)^k \left({\mathfrak{d} \over -1}\right)$. 
Therefore we obtain 
${c_{|\mathfrak{d}|}(\vartheta_{\mathfrak{d}}(f))/\Omega^{\epsilon
} \in 
K_{f}(\sqrt{|\mathfrak{d}|})}$. Moreover, let $\mathcal{O}_f$ be 
the ring of integers in $K_f
$. Then by combining Kohnen's theory and a result of Stevens (cf. Proposition 2.3.1 in \cite{Ste94}), 
we have 
$${1\over \Omega^{\epsilon}}\,\vartheta_{\mathfrak{d}}(f) \in \mathcal{O}_f[[q]]$$
after taking a suitable normalization. 
\end{rem}

Now, let us consider a $\Lambda$-adic analogue of the Shintani lifting for the universal ordinary $p$-stabilized newform ${\bf f}_{\rm ord} \in R^{\rm \,ord}[[q]]$ (cf. (10) in \S\S 3.1): 
We define the {\it metaplectic double covering} of the universal ordinary Hecke algebra $R^{\rm \,ord}$ 
by 
\[
\widetilde{R}^{\rm \,ord} := R^{\rm \,ord} \otimes_{\Lambda_1,\sigma} \Lambda_1, 
\]
where 
the tensor product is taken with respect to the ring homomorphism $\sigma : \Lambda_1 \to \Lambda_1$ corresponding to the group homomorphism $y \mapsto y^2$ on $\mathbb{Z}_p^{\times}$. We note that $\widetilde{R}^{\rm \,ord}$ has a natural $\Lambda_1$-algebra structure induced from the homomorphism $\lambda \mapsto 1 \otimes \lambda$ on $\Lambda_1$. Therefore we may define the associated $\Lambda$-adic analytic space $\mathfrak{X}(\widetilde{R}^{\rm ord})$ and its subset $\mathfrak{X}_{\rm alg}(\widetilde{R}^{\rm ord})$ of arithmetic points 
as well as $R^{\rm ord}$. 
However, we should mention that the ring homomorphism 
\[
\begin{array}{lcc}
R^{\rm ord}& \longrightarrow &\widetilde{R}^{\rm \,ord} \\
\,\mathbf{a} &\longmapsto &\mathbf{a} \otimes 1
\end{array}
\]
is not a $\Lambda_1$-algebra homomorphism. This causes the fact that the mapping induced by pullback on $\Lambda$-adic analytic spaces 
$\mathfrak{X}(\widetilde{R}^{\rm ord}) \to \mathfrak{X}(R^{\rm ord})$ 
does not preserve the signatures of arithmetic points. Indeed, we easily see that if $\widetilde{P}=(\kappa,\varepsilon) \in\mathfrak{X}_{\rm alg}(\widetilde{R}^{\rm ord})$ lies over $P \in \mathfrak{X}_{\rm alg}(R^{\rm ord})$, then $P=(2\kappa, \varepsilon^2)$. 

\vspace*{3mm}
Then the following theorem is a refinement of Stevens' $\Lambda$-adic Shintani lifting for ${\bf f}_{\rm ord} \in R^{\rm \,ord}[[q]]$: 

\begin{stevensthm}[cf. Theorem 3 in \cite{Ste94}]
For a fixed $P_0=(2k_0, \omega^{2k_0 }) \in \mathfrak{X}_{\rm alg}(R^{\rm ord})$ with $k_0 >1$, let 
$\mathfrak{d}_0$ be a fundamental discriminant with $(-1)^{k_0} \mathfrak{d}_0>0$ and $\mathfrak{d}_0 \equiv 0 \pmod{p}$. 
Then there exist 
a formal $q$-expansion \[
\Theta_{\mathfrak{d}_0}=\sum_{m\ge1} {\bf b}(\mathfrak{d}_0;\,m) \,q^m \in \widetilde{R}^{\rm \,ord}[[q]]\] 
and a choice of $p$-adic periods $\Omega_P \in \overline{\mathbb{Q}}_p$ for $P \in \mathfrak{X}_{\rm alg}(R^{\rm ord})$ {\rm (cf. \cite{G-S93})} satisfying the following: 
\begin{itemize}
\item[(i)] $\Omega_{P_0} \ne 0$. 

\item[(ii)] For each $\widetilde{P}=(k,\omega^{k}) \in \mathfrak{X}_{\rm alg}(\widetilde{R}^{\rm \,ord})$, the specialization 
{\[
\Theta_{\mathfrak{d}_0}(\widetilde{P})= \sum_{m \ge 1} {\bf b}(\mathfrak{d}_0;\,m) (\widetilde{P})\,q^m  \in \overline{\mathbb{Q}}_p[[q]]
\]}
\hspace*{-0.5mm}gives rise to a holomorphic cusp form in $\mathscr{S}_{k+1/2}^{+}(\Gamma_0(4p))^{(1)}$. \\
In particular, there exists an analytic neighborhood $\mathfrak{U}_0$ of $P_0$ \\
such that for each $\widetilde{P}$ lying over $P=(2k, \omega^{2k})
 \in \mathfrak{U}_0$, we have 
{\[
\Theta_{\mathfrak{d}_0}(\widetilde{P})={\Omega_{P}
\over \Omega^{\epsilon}(P)
}\, \vartheta_{\mathfrak{d}_0}({\bf f}_{\rm ord}(P)), 
\]}
where $\Omega^{\epsilon}(P)$ denotes 
the complex periods of ${\bf f}_{\rm ord}(P)$ with \\
\,\,signature $\epsilon \in \{\pm \}$. 
\end{itemize}
\end{stevensthm}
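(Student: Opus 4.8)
\emph{Proof strategy.} The plan is to realise the $\mathfrak{d}_0$-th Shintani lifting of Theorem III cohomologically, to replace the cuspidal modular symbol appearing there by Hida's universal ordinary modular symbol, and to replace the Shintani cycles by their $p$-adic interpolations, which exist precisely because $\mathfrak{d}_0\equiv 0\pmod p$. Write $\mathcal{P}_{w}$ for the local system on the modular curve $X_0(p)$ attached to the $w$-th symmetric power of the standard representation of $\mathrm{SL}_2$. For $f\in\mathscr{S}_{2k}(\Gamma_0(p))^{(1)}$ the cycle integral $r_Q(f)$ equals, up to the complex period $\Omega^{\epsilon}$, the value obtained by pairing the (sign-$\epsilon$ component of the) modular symbol $\xi_f\in H^1_{\mathrm{par}}(\Gamma_0(p),\mathcal{P}_{2k-2})$ attached to $f$ against the relative homology class of the oriented cycle $C_Q$ weighted by $Q(z,1)^{k-1}\in\mathcal{P}_{2k-2}$. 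Hence the $m$-th Fourier coefficient of $\Omega^{\epsilon}(f)^{-1}\vartheta_{\mathfrak{d}_0}(f)$ is the pairing of $\Omega^{\epsilon}(f)^{-1}\xi_f$ against the finite ``Shintani cycle''
\[
\mathcal{C}_{\mathfrak{d}_0,m}:=\sum_{d\mid p}\mu(d)\left({\mathfrak{d}_0\over d}\right)d^{k-1}\sum_{Q\in\mathcal{L}_{pd}(\mathfrak{d}_0 m d^2)/\Gamma_0(pd)}\chi_{\mathfrak{d}_0}(Q)\,\bigl[\,C_Q\otimes Q(z,1)^{k-1}\,\bigr]
\]
(a restatement of the formula in Theorem III with $N=p$, in which $d$ ranges over $\{1,p\}$).

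First I would replace $\Omega^{\epsilon}(f)^{-1}\xi_f$ by the universal ordinary object: Hida's universal ordinary parabolic cohomology group $\mathbb{V}^{\mathrm{ord}}$ of tame level $1$, locally free of finite rank over $\Lambda$ with its $R^{\mathrm{ord}}$-action, can be realised as a module of $\Lambda$-adic (distribution-valued) modular symbols, and I take a generator $\boldsymbol{\xi}^{\mathrm{ord}}$ whose specialisation at each arithmetic $P=(2k,\omega^{2k})$ spans the $\mathbf{f}_{\mathrm{ord}}(P)$-isotypic part of $H^1_{\mathrm{par}}(\Gamma_0(p),\mathcal{P}_{2k-2})$. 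Comparing $\boldsymbol{\xi}^{\mathrm{ord}}(P)$ with the integrally normalised symbol $\Omega^{\epsilon}(P)^{-1}\xi_{f_{2k}^{*}}$ defines the $p$-adic period $\Omega_{P}\in\overline{\mathbb{Q}}_p$ in the sense of Greenberg--Stevens \cite{G-S93}, and $\Omega_{P_0}\neq 0$ by the analysis of these periods in \cite{G-S93} (cf.\ \cite{Ste94}). Second I would interpolate $\mathcal{C}_{\mathfrak{d}_0,m}$: since $p\mid\mathfrak{d}_0$, the discriminant $\mathfrak{d}_0 m d^2$ is divisible by $pd$, so every $Q=[a,b,c]\in\mathcal{L}_{pd}(\mathfrak{d}_0 m d^2)$ has $p\mid a$ and $p\mid b$, while primitivity forces $p\nmid c$; thus $Q(z,1)=az^2+bz+c\equiv c\pmod p$ with $c\in\mathbb{Z}_p^{\times}$, and $\kappa\mapsto Q(z,1)^{\kappa}=c^{\kappa}\bigl(1+(az^2+bz)/c\bigr)^{\kappa}$ interpolates $p$-adically via the binomial series, so $Q(z,1)^{k-1}$ extends to a $\mathbb{Z}_p$-valued measure along $C_Q$. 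Interpolating the factors $d^{k-1}$ by the group-like elements $[d]\in\Lambda_1$ turns $\mathcal{C}_{\mathfrak{d}_0,m}$ into a measure-valued cycle $\widetilde{\mathcal{C}}_{\mathfrak{d}_0,m}$, and the shift from elliptic weight $2k$ to half-integral weight $k+1/2$ is exactly what forces the coefficients to land in the metaplectic double cover $\widetilde{R}^{\mathrm{ord}}=R^{\mathrm{ord}}\otimes_{\Lambda_1,\sigma}\Lambda_1$ rather than in $R^{\mathrm{ord}}$.

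With these in hand I would set $\mathbf{b}(\mathfrak{d}_0;m):=\langle\boldsymbol{\xi}^{\mathrm{ord}},\widetilde{\mathcal{C}}_{\mathfrak{d}_0,m}\rangle$ and $\Theta_{\mathfrak{d}_0}:=\sum_{m\ge 1}\mathbf{b}(\mathfrak{d}_0;m)\,q^m$. The plus-space support condition is automatic from the shape of the cycles, and for an arithmetic $\widetilde{P}$ lying over $P=(2k,\omega^{2k})$ the pairing specialises to the classical pairing of $\boldsymbol{\xi}^{\mathrm{ord}}(P)$ against $\mathcal{C}_{\mathfrak{d}_0,m}$, whence $\Theta_{\mathfrak{d}_0}(\widetilde{P})=\Omega_P\,\Omega^{\epsilon}(P)^{-1}\,\vartheta_{\mathfrak{d}_0}(\mathbf{f}_{\mathrm{ord}}(P))$ on the analytic neighbourhood $\mathfrak{U}_0$ of $P_0$ where $\boldsymbol{\xi}^{\mathrm{ord}}$ is analytic; by Theorem III the right-hand side lies in $\mathscr{S}^{+}_{k+1/2}(\Gamma_0(4p))^{(1)}$. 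Holomorphy of every specialisation $\Theta_{\mathfrak{d}_0}(\widetilde{P})$ as an element of $\mathscr{S}^{+}_{k+1/2}(\Gamma_0(4p))^{(1)}$ then follows from Stevens' control theorem identifying specialisations of $\Lambda$-adic modular symbols with classical ones (cf.\ \cite{Ste94}) together with Kohnen's half-integral weight theory recalled in \S\S 2.1, and $\Theta_{\mathfrak{d}_0}(\widetilde{P})\neq 0$ for $\widetilde{P}$ over $P\in\mathfrak{U}_0$ follows, after shrinking $\mathfrak{U}_0$ if necessary, from $\Omega_{P_0}\neq 0$ together with the non-vanishing of $c_{|\mathfrak{d}_0|}(\vartheta_{\mathfrak{d}_0}(f_{2k_0}))$, which is guaranteed by the non-vanishing theorem for the twisted central values $L_{\mathfrak{d}_0}(k_0,f_{2k_0})$ (cf.\ \cite{BFH90,Wal91}).

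The step I expect to be the main obstacle is the integrality-and-equivariance assertion $\mathbf{b}(\mathfrak{d}_0;m)\in\widetilde{R}^{\mathrm{ord}}$: one has to verify that $\widetilde{\mathcal{C}}_{\mathfrak{d}_0,m}$ is genuinely $\Lambda_1$-integral, that it transforms correctly under the Hecke correspondences so that pairing it against $\boldsymbol{\xi}^{\mathrm{ord}}$ respects the $R^{\mathrm{ord}}$-module structure, and that the resulting value lies in $\widetilde{R}^{\mathrm{ord}}$ and not merely in ${\rm Frac}(\widetilde{R}^{\mathrm{ord}})$. This is precisely where the hypothesis $p\mid\mathfrak{d}_0$ is essential---it is what makes $Q(z,1)^{k-1}$ interpolable along the cycles $C_Q$---and where Stevens' refined analysis of the integral structure of $\Lambda$-adic modular symbols \cite{Ste94} is used; once this is settled, holomorphy, the plus-space property and the precise specialisation formula are formal consequences of the classical material of \S 2 and \S 3.
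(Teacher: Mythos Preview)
The paper does not prove this statement; it is quoted as Theorem~IV with the attribution ``cf.\ Theorem~3 in \cite{Ste94}'' and used as a black box in the subsequent arguments (Proposition~3.7 and Theorem~4.4). So there is no ``paper's own proof'' to compare against.

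That said, your sketch is a faithful outline of Stevens' actual argument in \cite{Ste94}: the cohomological reinterpretation of the Shintani cycle integrals via modular symbols, the replacement of the classical symbol by Hida's universal ordinary symbol $\boldsymbol{\xi}^{\mathrm{ord}}$, the $p$-adic interpolation of the weighted cycles $C_Q\otimes Q(z,1)^{k-1}$ made possible precisely by $p\mid\mathfrak{d}_0$ (forcing $Q(z,1)\equiv c\in\mathbb{Z}_p^{\times}\pmod p$ along the relevant cycles), and the appearance of the metaplectic cover $\widetilde{R}^{\mathrm{ord}}$ from the weight-halving. Your identification of the integrality step $\mathbf{b}(\mathfrak{d}_0;m)\in\widetilde{R}^{\mathrm{ord}}$ as the crux is accurate; this is where Stevens does the real work. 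One minor point: you fold the non-vanishing of $\Theta_{\mathfrak{d}_0}$ into the theorem, but in the paper that is stated separately in the paragraph following Theorem~IV, not as part of the theorem itself.
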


We note that a non-vanishing property of $\Theta_{\mathfrak{d}_0}$ is naturally induced from the one of the classical Shintani lifting $\vartheta_{\mathfrak{d}_0}$. 
Indeed, if $\vartheta_{\mathfrak{d}_0}({\bf f}_{\rm ord}(P_0))$ is non-zero, then by virtue of the property (i), $\Theta_{\mathfrak{d}_0}$ does not vanish on an analytic neighborhood $\mathfrak{U}_0$ of $P_0$. 
Hence a suitable choice of $(\mathfrak{d}_0, \mathfrak{U}_0)$ yields a $p$-adic analytic family of non-zero half-integral weight forms $\{{\Omega_{P}
\over \Omega^{\epsilon}(P)
}\,\vartheta_{\mathfrak{d}_0}({\bf f}_{\rm ord}(P))\}$ parametrized by varying arithmetic points $P=(2k, \omega^{2k}) \in \mathfrak{U}_0$ with $k \ge k_0$. 
For further details on the non-vanishing properties of the classical Shintani lifting and its generalizations, see \cite{Wal80, Wal91}.  

\vspace*{3mm}
For each $P=(2k, \omega^{2k}) \in \mathfrak{U}_0$ with $k \ge k_0$, 
let $f_{2k} \in \mathscr{S}_{2k}({\rm SL}_2(\mathbb{Z}))$ be 
a $p$-ordinary normalized Hecke eigenform corresponding to ${\bf f}_{\rm ord}(P)
 \in \mathscr{S}_{2k}(\Gamma_0(p))^{(1)}$ (i.e. ${\bf f}_{\rm ord}(P)=f_{2k}^{*}$). 
Then by Theorem IV, we may also resolve the $p$-adic interpolation problem for some Fourier coefficients of the classical Shintani lifting $\vartheta_{\mathfrak{d}_0}(f_{2k}) \in \mathscr{S}_{k+1/2}^{+}(\Gamma_0(4))^{(1)}$. 

\vspace*{3mm}
For such occasions, we prepare the following: 

\begin{lem}
Suppose that $\mathfrak{D} \equiv 0 \pmod{p}$. Then we have 
\begin{itemize}
\item[(i)] For each $Q 
\in \mathcal{L}(\mathfrak{D})
$, there exists $Q'
 \in \mathcal{L}_p(\mathfrak{D})$ such that 
 \[
 Q' \equiv Q \pmod{{\rm SL}_2(\mathbb{Z})}
 . \]

\item[(ii)] If $\mathfrak{D} \equiv 0 \pmod{p^2}$ and $\left({\mathfrak{D}/p^2 \over p}\right)=1$, then 
for each $Q 
\in \mathcal{L}_p(\mathfrak{D})
$, there exists $[a,b,c]
 \in \mathcal{L}_p(\mathfrak{D})$ such that $a \equiv 0 \pmod{p^3}$ and  
 \[
Q \equiv [a,b,c] \pmod{\Gamma_0(p)}
 . \]

\item[(iii)] 
The identification  
$[a,b,c] 
\,\,\mathrm{mod}\,\, \Gamma_0(p) 
\mapsto 
[a,b,c]$
$\mathrm{mod}\,\, {\rm SL}_2(\mathbb{Z})
$ induces a one-to-one correspondence between 
$\mathcal{L}_p(\mathfrak{D})/\Gamma_0(p)$ and $\mathcal{L}_p(\mathfrak{D})/{\rm SL}_2(\mathbb{Z})$. 

\item[(iv)] If $\mathfrak{D} \not\equiv 0 \pmod{p^2}$, then the mapping 
$[a,b,c] 
\,\,\mathrm{mod}\,\, \Gamma_0(p) 
 \mapsto [p^{-1}a,b,pc] $ 
 $\mathrm{mod}\,\, {\rm SL}_2(\mathbb{Z})
$ induces a one-to-one correspondence between 
$\mathcal{L}_p(\mathfrak{D})/\Gamma_0(p)$ and $\mathcal{L}(\mathfrak{D})/{\rm SL}_2(\mathbb{Z})$. 

\end{itemize}
\end{lem}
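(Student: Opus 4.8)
The plan is to treat the four assertions in sequence, with (i) and (ii) handled by explicit reduction arguments on binary quadratic forms and (iii), (iv) following formally once (i) and (ii) are in place. For (i), given $Q = [a,b,c] \in \mathcal{L}(\mathfrak{D})$ with $\mathfrak{D} \equiv 0 \pmod p$, I would observe that since $Q$ is primitive and $p \mid \mathfrak{D} = b^2 - 4ac$, the form $Q$ represents values divisible by $p$; concretely, the number of $\mathrm{SL}_2(\mathbb{Z})$-classes of forms of discriminant $\mathfrak{D}$ representing a multiple of $p$ as the leading coefficient is nonempty. The cleanest way: choose integers $(x,y)$ with $\gcd(x,y)=1$ and $Q(x,y) \equiv 0 \pmod p$ — possible because modulo $p$ the conic $Q(x,y) \equiv 0$ has a nontrivial solution (the discriminant being $\equiv 0$, the quadratic form degenerates, so it factors as a perfect square of a linear form times a unit, hence has a rational point), and primitivity lets us complete $(x,y)$ to a matrix in $\mathrm{SL}_2(\mathbb{Z})$. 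Applying that matrix moves $Q$ to an $\mathrm{SL}_2(\mathbb{Z})$-equivalent form $Q'$ whose leading coefficient is $Q(x,y) \equiv 0 \pmod p$, i.e. $Q' \in \mathcal{L}_p(\mathfrak{D})$.

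For (ii), I would argue similarly but one step further. Starting from $Q = [a,b,c] \in \mathcal{L}_p(\mathfrak{D})$ with $p^2 \mid \mathfrak{D}$ and $\left(\tfrac{\mathfrak{D}/p^2}{p}\right)=1$, note first that $p \mid a$ and $p^2 \mid b^2 - 4ac = \mathfrak{D}$ forces $p \mid b$ (since $p$ is odd, $4ac \equiv 0 \pmod{p^2}$ as $p \mid a$ gives $p^2 \mid 4ac$ once we also check $p \mid c$ is not needed — rather, $p^2 \mid b^2$, so $p \mid b$). Write $a = p a_1$, $b = p b_1$; then $\mathfrak{D}/p^2 = b_1^2 - 4 a_1 c$, and the hypothesis that $\mathfrak{D}/p^2$ is a square mod $p$ means this reduced form is split mod $p$, so it represents $0$ nontrivially mod $p$. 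Using the $\Gamma_0(p)$-action (which by Lemma's statement preserves $\mathcal{L}_p$) I would pick $M \in \Gamma_0(p)$ — i.e. $M = \left(\begin{smallmatrix} * & * \\ 0 & * \end{smallmatrix}\right) \pmod p$ is not quite what we want; rather $M = \left(\begin{smallmatrix} \alpha & \beta \\ p\gamma & \delta \end{smallmatrix}\right)$ — chosen so that the new leading coefficient $Q(\alpha, p\gamma)$ acquires an extra factor of $p$. Tracking powers of $p$: $Q(\alpha, p\gamma) = a\alpha^2 + b\alpha p\gamma + c p^2 \gamma^2 \equiv a \alpha^2 \pmod{p^2}$, and choosing $\alpha$ so that $a_1 \alpha^2 \equiv$ (cancellable term coming from the split structure mod $p$) lets us gain the desired $p^3 \mid a$. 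The split condition on $\mathfrak{D}/p^2$ is exactly what makes this possible; this is the step I expect to be fussiest, so I would set it up by reducing mod $p$ and using that $b_1^2 - 4a_1 c$ being a square mod $p$ lets us write $\mathfrak{D}/p^2 \equiv t^2$ and then solve $a_1 \alpha^2 + b_1 \alpha \gamma' + c(\gamma')^2 \equiv 0 \pmod p$ with $\gcd(\alpha, \gamma') = 1$.

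For (iii), the natural map $\mathcal{L}_p(\mathfrak{D})/\Gamma_0(p) \to \mathcal{L}_p(\mathfrak{D})/\mathrm{SL}_2(\mathbb{Z})$ is surjective by definition; injectivity amounts to showing that if $Q, Q' \in \mathcal{L}_p(\mathfrak{D})$ are $\mathrm{SL}_2(\mathbb{Z})$-equivalent then they are already $\Gamma_0(p)$-equivalent. I would prove this by a counting/stabilizer argument: the index $[\mathrm{SL}_2(\mathbb{Z}) : \Gamma_0(p)] = p+1$, and the $\mathrm{SL}_2(\mathbb{Z})$-orbit of $Q$ meets $\mathcal{L}_p(\mathfrak{D})$ in a set on which $\Gamma_0(p)$ acts; comparing the size of this intersection (the forms in the orbit with $p \mid$ leading coefficient correspond to the cusps/edges fixed appropriately under the $\mathrm{mod}\ p$ reduction of the quadratic form, and one checks there is exactly one such up to $\Gamma_0(p)$ per $\mathrm{SL}_2(\mathbb{Z})$-class when $p \mid \mathfrak{D}$, because the reduced-mod-$p$ form is a perfect square of a linear form, determining a single line $\mathbb{P}^1(\mathbb{F}_p)$-point, hence a single $\Gamma_0(p)$-coset of "transporters"). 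Then (iv) is the analogous bookkeeping in the case $p \| \mathfrak{D}$: here $p \mid a$ but $p \nmid c$ (else $p^2 \mid \mathfrak{D}$), so the map $[a,b,c] \mapsto [p^{-1}a, b, pc]$ lands in $\mathcal{L}(\mathfrak{D})$ (note $p^{-1}a \in \mathbb{Z}$, and the new discriminant is $b^2 - 4(p^{-1}a)(pc) = b^2 - 4ac = \mathfrak{D}$), is visibly well-defined on $\Gamma_0(p)$-orbits because conjugating by $\mathrm{diag}(1,p)$ intertwines the $\Gamma_0(p)$-action on $\mathcal{L}_p(\mathfrak{D})$ with the $\mathrm{SL}_2(\mathbb{Z})$-action on $\mathcal{L}(\mathfrak{D})$, and is a bijection with explicit inverse $[a',b',c'] \mapsto [pa', b', p^{-1}c']$. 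The main obstacle throughout is (ii): pinning down the exact congruence class of $\alpha$ that produces the extra factor $p^3$ while keeping $\gcd(\alpha, p\gamma)=1$, which forces me to use the splitting hypothesis $\left(\tfrac{\mathfrak{D}/p^2}{p}\right)=1$ in an essential way rather than just $p^2 \mid \mathfrak{D}$.
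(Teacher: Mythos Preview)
Your overall strategy is sound and aligns with the paper's proof, but two points deserve comment.

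For (ii), you have a bookkeeping slip. From $Q=[a,b,c]\in\mathcal{L}_p(\mathfrak{D})$ with $p\mid a$ and $p^2\mid\mathfrak{D}$, you correctly get $p\mid b$, but then primitivity forces $p\nmid c$, and $p^2\mid b^2-4ac$ together with $p^2\mid b^2$ gives $p^2\mid 4ac$, hence $p^2\mid a$. So you should write $a=p^2a_2$, $b=pb_1$; the form with discriminant $\mathfrak{D}/p^2$ is $[a_2,b_1,c]$, not $[a_1,b_1,c]$ with $a_1=a/p$. With this correction your argument goes through exactly as the paper's: for $M=\left(\begin{smallmatrix}\alpha&\beta\\p\gamma'&\delta\end{smallmatrix}\right)\in\Gamma_0(p)$ the new leading coefficient satisfies $p^{-2}a'=[a_2,b_1,c](\alpha,\gamma')$, and the split hypothesis $\bigl(\tfrac{\mathfrak{D}/p^2}{p}\bigr)=1$ guarantees a nontrivial zero of $[a_2,b_1,c]$ mod $p$ with $p\nmid\alpha$ (since $p\nmid c$).

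For (iii), your counting/stabilizer route works, but the paper's argument is a three-line direct computation that you may prefer: if $[a',b',c']=[a,b,c]\circ\left(\begin{smallmatrix}\alpha&\beta\\\gamma&\delta\end{smallmatrix}\right)$ with both forms in $\mathcal{L}_p(\mathfrak{D})$, then $a'=a\alpha^2+b\alpha\gamma+c\gamma^2$; since $p\mid a,a',b$ and $p\nmid c$ (primitivity), one reads off $p\mid\gamma$. This is really the same content as your ``unique $\mathbb{P}^1(\mathbb{F}_p)$-point'' observation, just written out. For (iv) the paper likewise proves only injectivity explicitly (via the same kind of coefficient computation, forcing $p\mid\beta$ this time), deferring surjectivity to the reference; your conjugation-by-$\mathrm{diag}(1,p)$ picture is the right heuristic but note the explicit inverse $[a',b',c']\mapsto[pa',b',p^{-1}c']$ is only defined on representatives with $p\mid c'$, which exist by (i) applied after swapping $a\leftrightarrow c$.
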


\begin{proof}
The assertions (i), (ii), (iii) and (iv) have appeared respectively as Lemmas 1, 3, 2 of \cite{Gue00} and Proposition in \S I.1 of \cite{GKZ87}. For the readers' convenience, we present all of their proofs. 
For each $[a,b,c] \in \mathcal{L}(\mathfrak{D})$ with $b \not\equiv 0 \pmod{p}$, the assumption $\mathfrak{D} \equiv 0 \pmod{p}$ yields 
$a \not\equiv 0 \pmod{p}$. Then we put $\beta \equiv -b/(2a) \pmod{p}$ and  
\[
{
[a',b',c']:=[a,b,c]\circ 
\left(
\begin{array}{cc}
1 & \beta \\
0 & 1
\end{array}\right) \equiv [a,b,c] \pmod{{\rm SL}_2(\mathbb{Z})}. 
}
\]
We note that $b' = 2a \beta +b \equiv 0 \pmod{p}$. 
Hence we may assume that $b \equiv 0 \pmod{p}$. 
If $[a,b,c] \in \mathcal{L}(\mathfrak{D})$ satisfies $a \not\equiv 0 \pmod{p}$, then we easily see that $c \equiv 0 \pmod{p}$. Hence we have 
\[
{
[a,b,c] \equiv [a,b,c]\circ 
\left(
\begin{array}{cc}
p & p-1 \\
1 & 1
\end{array}\right) \in \mathcal{L}_p(\mathfrak{D}) \pmod{{\rm SL}_2(\mathbb{Z})}
}
\]
and we obtain the assertion (i). 
For each $Q=[a,b,c] \in \mathcal{L}_p(\mathfrak{D})$ with $\mathfrak{D}\equiv 0 \pmod{p^2}$, we easily see that $a \equiv 0 \pmod{p^2}$. 
If $Q'=[a',b',c']\in \mathcal{L}_p(\mathfrak{D})$ satisfies $Q'
=
Q
 \circ \left(\begin{smallmatrix}
\alpha & \beta \\
\gamma & \delta
\end{smallmatrix}\right)$ 
with some $\left(\begin{smallmatrix}
\alpha & \beta \\
\gamma & \delta
\end{smallmatrix}\right) \in \Gamma_0(p)
$,  
then the condition $\left({\mathfrak{D}/p^2 \over p}\right)=1$ yields that the quadratic form $[p^{-2}a, p^{-1}b, c] \in \mathcal{L}_p(\mathfrak{D}/p^2)$ satisfies 
\[
[p^{-2}a, p^{-1}b, c](\alpha, p^{-1}\gamma ) = (p^{-2}a) \alpha^2 + (p^{-1}b) \alpha (p^{-1}\gamma) + c(p^{-1}\gamma)^2 \equiv 0 \pmod{p}. 
\]
This equation implies the fact that $Q'$ satisfies $p^{-2}a' \equiv 0 \pmod{p}$, and hence we obtain the assertion (ii). 
For the proof of the assertion (iii), it suffices to show the injectivity of the mapping 
$[a,b,c] 
\,\,\mathrm{mod}\,\, \Gamma_0(p) 
\mapsto 
[a,b,c] \,\,\mathrm{mod}\,\, {\rm SL}_2(\mathbb{Z})
$. 
Indeed, for $[a,b,c]$, $[a',b',c'] \in \mathcal{L}_p(\mathfrak{D})$, if 
there exists $\left(\begin{smallmatrix}
\alpha & \beta \\
\gamma & \delta
\end{smallmatrix}\right) \in {\rm SL}_2(\mathbb{Z})
$ such that 
\[
[a',b', c'] =[a,b, c] \circ \left(
\begin{array}{cc}
\alpha & \beta \\
\gamma & \delta
\end{array}\right),\]
then we have $a'=a\alpha^2 +b \alpha \gamma + c \gamma^2$. Since $a, a', b \equiv 0 \pmod{p}$ and $\gcd(c,p)=1$, we obtain 
$\gamma \equiv 0 \pmod{p}$. Namely, $[a,b,c] \equiv [a',b',c'] \pmod{\Gamma_0(p)}$. 
In order to prove the assertion (iv), it also suffices to show that the injectivity of the mapping  
$[a,b,c] \,\,\mathrm{mod}\,\, \Gamma_0(p) 
\mapsto [p^{-1}a,b,pc] \,\, \mathrm{mod}\,\, {\rm SL}_2(\mathbb{Z})
$. 
Indeed, suppose that $[a,b,c]$, $[a',b',c'] \in \mathcal{L}_p(\mathfrak{D})$ 
satisfy 
\[
[p^{-1}a',b', pc'] =[p^{-1}a,b, pc] \circ \left(
\begin{array}{cc}
\alpha & \beta \\
\gamma & \delta
\end{array}\right)\]
with $\left(\begin{smallmatrix}
\alpha & \beta \\
\gamma & \delta
\end{smallmatrix}\right) \in {\rm SL}_2(\mathbb{Z})
$. The assumption $\mathfrak{D} \not\equiv 0 \pmod{p^2}$ yields $a \not\equiv 0 \pmod{p^2}$. 
Since $pc'=p^{-1}a \beta^2 + b\beta \delta + p c \delta^2$, 
we have $\beta \equiv 0 \pmod{p}$. Then we have 
\[
[a',b', c'] =[a,b, c]\circ \left(
\begin{array}{cc}
\alpha & p^{-1}\beta \\
p\gamma & \delta
\end{array}\right), 
\]
and hence $[a',b',c'] \equiv [a,b,c] \pmod{\Gamma_0(p)}$. 
We complete the proof. 
\end{proof}

As a consequence of Theorem IV, we have the following: 

\begin{prop} 
Under the same notation and assumptions as above,  
let
$\mathfrak{d}_0$ be a fixed fundamental discriminant with $(-1)^{k_0}\mathfrak{d}_0 >0$, $\mathfrak{d}_0 \equiv 0 \pmod{p}$ and $c_{|\mathfrak{d}_0|}(\vartheta_{\mathfrak{d}_0}(f_{2k_0})) \ne 0$ {\rm (cf. Lemma 2.4)}. 
Then for each integer $k \ge k_0$ and each fundamental discriminant $\mathfrak{d}$ with $(-1)^k \mathfrak{d}>0$, 
there exist an analytic neighborhood $\mathfrak{U}_0$ of $P_0=(2k_0, \omega^{2k_0}) \in \mathfrak{X}_{\rm alg}(R^{\rm ord})$ and an element ${\bf c}(\mathfrak{d}_0;\,{|\mathfrak{d}|}) \in (\widetilde{R}^{\rm ord})_{(\widetilde{P}_0)}$ such that 
\[
{\bf c}(\mathfrak{d}_0;\,{|\mathfrak{d}|})(\widetilde{P})={\Omega_{P} \over \Omega^{\epsilon}(P)}\,\left(1- \left({\,\mathfrak{d}\, \over p}\right) \beta_p(f_{2k})\, p^{-k}\right) c_{|\mathfrak{d}|}(
\vartheta_{\mathfrak{d}_0}(f_{2k}))
\]
for each $\widetilde{P} =(k,\omega^{k})\in \mathfrak{X}_{\rm alg}(\widetilde{R}^{\rm ord})$ lying over $P=(2k,\omega^{2k}) \in \mathfrak{U}_0$. 
\end{prop}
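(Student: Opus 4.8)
The strategy is to extract from Stevens' $\Lambda$-adic Shintani lifting (Theorem IV) the interpolation of a single Fourier coefficient — namely the $|\mathfrak{d}|$-th one — and then account for the difference between the half-integral weight form attached to the $p$-stabilized newform ${\bf f}_{\rm ord}(P)=f_{2k}^{*}$ on $\Gamma_0(4p)$ and the classical Shintani lifting $\vartheta_{\mathfrak{d}_0}(f_{2k})$ on $\Gamma_0(4)$. First I would set ${\bf c}(\mathfrak{d}_0;\,|\mathfrak{d}|):={\bf b}(\mathfrak{d}_0;\,|\mathfrak{d}|)$, the $|\mathfrak{d}|$-th coefficient of the $\Lambda$-adic form $\Theta_{\mathfrak{d}_0}\in\widetilde{R}^{\rm ord}[[q]]$ furnished by Theorem IV; this visibly lies in $(\widetilde{R}^{\rm ord})_{(\widetilde{P}_0)}$ since each coefficient of $\Theta_{\mathfrak{d}_0}$ does (after shrinking $\mathfrak{U}_0$ if necessary so that $\Theta_{\mathfrak{d}_0}$ is defined and non-vanishing there, which is possible because $c_{|\mathfrak{d}_0|}(\vartheta_{\mathfrak{d}_0}(f_{2k_0}))\ne0$ forces $\Theta_{\mathfrak{d}_0}(P_0)\ne0$ and hence $\Omega_{P_0}\ne0$ and $\vartheta_{\mathfrak{d}_0}({\bf f}_{\rm ord}(P_0))\ne0$). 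By Theorem IV (ii), for $\widetilde P$ lying over $P=(2k,\omega^{2k})\in\mathfrak U_0$ we get
\[
{\bf c}(\mathfrak{d}_0;\,|\mathfrak{d}|)(\widetilde P)
={\Omega_P\over\Omega^\epsilon(P)}\,c_{|\mathfrak{d}|}\bigl(\vartheta_{\mathfrak{d}_0}({\bf f}_{\rm ord}(P))\bigr),
\]
so the task reduces to the purely classical identity relating $c_{|\mathfrak{d}|}(\vartheta_{\mathfrak{d}_0}(f_{2k}^{*}))$ to $c_{|\mathfrak{d}|}(\vartheta_{\mathfrak{d}_0}(f_{2k}))$.

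The heart of the argument is this classical comparison. The Shintani lifting of $f_{2k}^{*}=f_{2k}(z)-\beta_p(f_{2k})f_{2k}(pz)$ on $\Gamma_0(4p)$ is computed as a sum of cycle integrals over $\mathcal L_{(pd)}(\mathfrak d_0 |\mathfrak d| d^2)/\Gamma_0(pd)$ — i.e. over quadratic forms $[a,b,c]$ with $p\mid a$ — whereas $\vartheta_{\mathfrak{d}_0}(f_{2k})$ on $\Gamma_0(4)$ uses cycles over $\mathcal L(\mathfrak d_0|\mathfrak d|)/{\rm SL}_2(\mathbb Z)$. This is exactly where Lemma~2.5 enters: since $\mathfrak d_0\equiv0\pmod p$ we have $\mathfrak d_0|\mathfrak d|\equiv0\pmod p$, and parts (iii), (iv) of Lemma~2.5 give the bijections between $\mathcal L_p/\Gamma_0(p)$ and, respectively, $\mathcal L_p/{\rm SL}_2(\mathbb Z)$ (when $p^2\mid\mathfrak d_0|\mathfrak d|$) or $\mathcal L/{\rm SL}_2(\mathbb Z)$ (when $p^2\nmid$), the latter via $[a,b,c]\mapsto[p^{-1}a,b,pc]$. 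Tracking how the integrand $f_{2k}^{*}(z)\,Q(z,1)^{k-1}dz$ transforms under $z\mapsto pz$ and under these reindexings — the $f_{2k}(pz)$ term contributes via the substitution that sends $[a,b,c]$ to $[p^{-1}a,b,pc]$, picking up a factor of $p^{k-1}$ from $Q(z,1)^{k-1}$ and the $U_p$-type scaling — produces precisely the Euler-type correction factor $\bigl(1-\bigl({\mathfrak d\over p}\bigr)\beta_p(f_{2k})p^{-k}\bigr)$ multiplying $c_{|\mathfrak d|}(\vartheta_{\mathfrak d_0}(f_{2k}))$. (This is the genus-one analogue of the mechanism by which a newform's $\vartheta$-lift on $\Gamma_0(4)$ relates to its $p$-stabilization's lift on $\Gamma_0(4p)$, and it is exactly Guerzhoy's computation in \cite{Gue00}.)

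The main obstacle is the bookkeeping in that last step: one must carry the genus character $\chi_{\mathfrak d_0}$ through the bijections of Lemma~2.5 and verify it is preserved (it depends only on the ${\rm SL}_2(\mathbb Z)$- or $\Gamma_0(p)$-class and the residue of the represented primes modulo $\mathfrak d_0$), and one must correctly match the geodesic cycles $C_Q$ under the maps $Q\mapsto[p^{-1}a,b,pc]$ and $Q\circ M$ for $M\in\Gamma_0(p)\setminus{\rm SL}_2(\mathbb Z)$, including the orientation conventions when $a=0$. Once the orbit combinatorics and the change-of-variables in the cycle integral are pinned down, the factor $\bigl(1-\bigl({\mathfrak d\over p}\bigr)\beta_p(f_{2k})p^{-k}\bigr)$ drops out and, substituting back, we obtain the asserted formula for ${\bf c}(\mathfrak d_0;\,|\mathfrak d|)(\widetilde P)$; the non-vanishing of the right-hand side at $P_0$ is then immediate from $\Omega_{P_0}\ne0$, $c_{|\mathfrak d_0|}(\vartheta_{\mathfrak d_0}(f_{2k_0}))\ne0$, and $v_p(\beta_p(f_{2k_0}))>0$, which makes the correction factor a $p$-adic unit.
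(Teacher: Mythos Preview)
Your plan to take ${\bf c}(\mathfrak{d}_0;|\mathfrak{d}|):={\bf b}(\mathfrak{d}_0;|\mathfrak{d}|)$ and reduce to a classical comparison of $\vartheta_{\mathfrak{d}_0}(f_{2k}^{*})$ with $\vartheta_{\mathfrak{d}_0}(f_{2k})$ via the bijections in Lemma~3.6 is exactly what the paper does in the first two cases, and your description of the mechanism (change of variables $z\mapsto pz$, the reindexing $[a,b,c]\mapsto[p^{-1}a,b,pc]$, and tracking $\chi_{\mathfrak{d}_0}$) is correct there. However, you are missing a genuine trichotomy on~$\mathfrak{d}$, and the third case cannot be handled by your method.

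The paper splits according to (I) $p\nmid\mathfrak{d}$, (II) $p\mid\mathfrak{d}$ and $\bigl(\frac{\mathfrak{d}_0|\mathfrak{d}|/p^2}{p}\bigr)=1$, (III) $p\mid\mathfrak{d}$ and $\bigl(\frac{\mathfrak{d}_0|\mathfrak{d}|/p^2}{p}\bigr)=-1$. Your sketch covers~(I) using parts (i), (iii), (iv) of Lemma~3.6; note that (iv) requires $\mathfrak{d}_0|\mathfrak{d}|\not\equiv 0\pmod{p^2}$, so it is unavailable whenever $p\mid\mathfrak{d}$. In case~(II) the $f_{2k}(pz)$-contribution is shown to \emph{vanish} by a character-sum argument that relies on Lemma~3.6(ii), which in turn needs the Legendre symbol to be~$+1$. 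In case~(III) neither (ii) nor (iv) applies, and the paper explicitly says one \emph{cannot} argue as in (I)--(II). Instead it introduces an auxiliary fundamental discriminant $\mathfrak{d}_1$ with $p\nmid\mathfrak{d}_1$ and $c_{|\mathfrak{d}_1|}(\vartheta_{\mathfrak{d}_0}(f_{2k_0}))\ne0$, uses the proportionality relation (11) among Shintani liftings to write
\[
c_{|\mathfrak{d}|}(\vartheta_{\mathfrak{d}_0}(f_{2k}))
=\frac{c_{|\mathfrak{d}_1|}(\vartheta_{\mathfrak{d}}(f_{2k}))\cdot c_{|\mathfrak{d}_0|}(\vartheta_{\mathfrak{d}_0}(f_{2k}))}{c_{|\mathfrak{d}_1|}(\vartheta_{\mathfrak{d}_0}(f_{2k}))},
\]
and then defines
\[
{\bf c}(\mathfrak{d}_0;|\mathfrak{d}|):=\frac{{\bf b}(\mathfrak{d};|\mathfrak{d}_1|)\cdot{\bf b}(\mathfrak{d}_0;|\mathfrak{d}_0|)}{{\bf b}(\mathfrak{d}_0;|\mathfrak{d}_1|)}\in(\widetilde{R}^{\rm ord})_{(\widetilde{P}_0)},
\]
each factor being covered by cases (I) or (II). This quotient construction is precisely why the statement places ${\bf c}$ in the \emph{localization} $(\widetilde{R}^{\rm ord})_{(\widetilde{P}_0)}$ rather than in $\widetilde{R}^{\rm ord}$ itself, and why one must possibly shrink $\mathfrak{U}_0$ to avoid zeros of the denominator. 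Your proposal, taking ${\bf c}={\bf b}$ uniformly, would land in $\widetilde{R}^{\rm ord}$ and does not explain the need for localization; that is the visible symptom of the missing case.
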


\begin{proof}
The following proof is essentially the same as the one of the main theorem in \cite{Gue00}: 
By virtue of Theorem IV, there exists an element ${\bf b}(\mathfrak{d}_0;\,{|\mathfrak{d}|}) \in \widetilde{R}^{\rm ord}$
such that for each $\widetilde{P} \in \mathfrak{X}_{\rm alg}(\widetilde{R}^{\rm ord})$ 
lying over $P=(2k,\omega^{2k}) \in \mathfrak{U}_0$, 
\begin{eqnarray*}
{\bf b}(|\mathfrak{d}_0|;\,\mathfrak{d}) (\widetilde{P})&=&{\Omega_{P} \over \Omega^{\epsilon}(P)}\,c_{|\mathfrak{d}|}(\vartheta_{\mathfrak{d}_0}(f_{2k}^{*})) \\
&=&{\Omega_{P} \over \Omega^{\epsilon}(P)}\,\sum_{Q \in \mathcal{L}_{p}(\mathfrak{d}_0|\mathfrak{d}|)/\Gamma_0(p)} \chi_{\mathfrak{d}_0}(Q)\, r_{Q}(f_{2k}^{*}). 
\end{eqnarray*} 
(I) Suppose that $\mathfrak{d} \not\equiv 0 \pmod{p}$. Then by (i), (iii) of Lemma 3.6, we have 
\begin{eqnarray*}
&& \sum_{Q \in \mathcal{L}_{p}(\mathfrak{d}_0|\mathfrak{d}|)/\Gamma_0(p)} \chi_{\mathfrak{d}_0}(Q)\, r_{Q}(f_{2k}^{*})
= \sum_{Q \in \mathcal{L}(\mathfrak{d}_0|\mathfrak{d}|)/{\rm SL}_2(\mathbb{Z})} \chi_{\mathfrak{d}_0}(Q)\, r_{Q}(f_{2k}) \hspace*{25mm} \\
&& \hspace*{0mm}- \beta_p(f_{2k}) \sum_{[a,b,c] \in \mathcal{L}_{p}(\mathfrak{d}_0|\mathfrak{d}|)/\Gamma_0(p)} \chi_{\mathfrak{d}_0}([a,b,c])\, 
\displaystyle\int_{C_{[a,b,c]}
} f_{2k}(pz)
(az^2+bz +c)^{k-1} dz.  
\end{eqnarray*}
Here we easily see that 
\[
\chi_{\mathfrak{d}_0}([a,b,c])=\left({\,\mathfrak{d}\,\over p}\right)\chi_{\mathfrak{d}_0}([p^{-1}a,b,pc])
\]
for each $[a,b,c] \in \mathcal{L}_{p}(\mathfrak{d}_0|\mathfrak{d}|)$, 
and hence it follows from (iv) of Lemma 3.6 that 
\begin{eqnarray*}
\lefteqn{\sum_{[a,b,c] \in \mathcal{L}_{p}(\mathfrak{d}_0|\mathfrak{d}|)/\Gamma_0(p)} \chi_{\mathfrak{d}_0}([a,b,c])\, 
\displaystyle\int_{C_{[a,b,c]}
} f_{2k}(pz)
(az^2+bz +c)^{k-1} dz} \\
&=&
\left({\,\mathfrak{d}\,\over p}\right)
\sum_{[a,b,c] \in \mathcal{L}(\mathfrak{d}_0|\mathfrak{d}|)/{\rm SL}_2(\mathbb{Z})} \chi_{\mathfrak{d}_0}([a,b,c])\, 
\\
&& \hspace*{10mm} \times \displaystyle\int_{C_{[a,b,c]}
} f_{2k}(pz)
\{a(pz)^2+b(pz) +c\}^{k-1} \cdot p^{1-k} dz \\
&=&
\left({\,\mathfrak{d}\,\over p}\right) p^{-k}
\sum_{Q=[a,b,c] \in \mathcal{L}(\mathfrak{d}_0|\mathfrak{d}|)/{\rm SL}_2(\mathbb{Z})} \chi_{\mathfrak{d}_0}(Q)\, r_{Q}(f_{2k}), 
\end{eqnarray*}
where in the second equation we have made use of the transformation law with respect to $z \mapsto p^{-1}z$. Therefore we obtain that ${\bf c}(\mathfrak{d}_0;\,{|\mathfrak{d}|}) :={\bf b}(\mathfrak{d}_0;\,{|\mathfrak{d}|}) \in \widetilde{R}^{\rm ord}$ satisfies the equation 
\begin{eqnarray*}
\lefteqn{
{\bf c}(\mathfrak{d}_0;\,{|\mathfrak{d}|})(\widetilde{P}) 
} \\
&=&{\Omega_{P} \over \Omega^{\epsilon}(P)}\,\left(1- \left({\,\mathfrak{d}\, \over p}\right) \beta_p(f_{2k})\, p^{-k}\right) \sum_{Q \in \mathcal{L}(\mathfrak{d}_0|\mathfrak{d}|)/{\rm SL}_2(\mathbb{Z})} \chi_{\mathfrak{d}_0}(Q)\, r_{Q}(f_{2k}) \\
&=&{\Omega_{P} \over \Omega^{\epsilon}(P)}\,\left(1- \left({\,\mathfrak{d}\, \over p}\right) \beta_p(f_{2k})\, p^{-k}\right) c_{|\mathfrak{d}|}(
\vartheta_{\mathfrak{d}_0}(f_{2k})).   
\end{eqnarray*}
(II) Suppose that $\mathfrak{d} \equiv 0 \pmod{p}$ and $\left({\mathfrak{d}_0|\mathfrak{d}|/p^2\over p}\right)=1$. Similarly to the case (I), in order to show that ${\bf c}(\mathfrak{d}_0;\,{|\mathfrak{d}|}):={\bf b}(\mathfrak{d}_0;\,{|\mathfrak{d}|})$ satisfies the desired property, it suffices to show the equation 
\begin{equation}
{\sum_{[a,b,c] \in \mathcal{L}_{p}(\mathfrak{d}_0|\mathfrak{d}|)/\Gamma_0(p)} \chi_{\mathfrak{d}_0}([a,b,c])\, 
\displaystyle\int_{C_{[a,b,c]}
} f_{2k}(pz)
(az^2+bz +c)^{k-1} dz}=0. 
\end{equation}
Indeed, we easily see that for each $s \in \mathbb{Z}/p\mathbb{Z}$, the cycle integrals on the left-hand side of the equation (12) are invariant under the translation $z \mapsto z+p^{-1}s$, and hence we have 
\begin{eqnarray*}
\lefteqn{
\sum_{[a,b,c] \in \mathcal{L}_{p}(\mathfrak{d}_0|\mathfrak{d}|)/\Gamma_0(p)} \chi_{\mathfrak{d}_0}([a,b,c])\, 
\displaystyle\int_{C_{[a,b,c]}
} f_{2k}(pz)
(az^2+bz +c)^{k-1} dz
} \\
&=& p^{-1} \sum_{[a,b,c] \in \mathcal{L}_{p}(\mathfrak{d}_0|\mathfrak{d}|)/\Gamma_0(p)} 
\left(\displaystyle\int_{C_{[a,b,c]}
} f_{2k}(pz)
(az^2+bz +c)^{k-1} dz \right)
\\
&& \times 
\sum_{s \in \mathbb{Z}/p\mathbb{Z}} 
\chi_{\mathfrak{d}_0}\!\left(\left[a,\,2(p^{-1}a) s + b,\,(p^{-2}a) s^2 + (p^{-1}b) s + c\right]\right). 
\end{eqnarray*}
Then it follows from (ii) of Lemma 3.6 and a simple calculation that   
\begin{eqnarray*}
\lefteqn{
\sum_{s \in \mathbb{Z}/p\mathbb{Z}} 
\chi_{\mathfrak{d}_0}\!\left(\left[a,\,2(p^{-1}a) s + b,\,(p^{-2}a) s^2 + (p^{-1}b) s + c\right]\right)
} \\ 
&=& \left({\mathfrak{d}_0/p \over a}\right) \sum_{s \in \mathbb{Z}/p\mathbb{Z}} \left({p \over (p^{-2}a) s + (p^{-1}b)s + c}\right) \\
&=& \left({\mathfrak{d}_0/p \over a}\right) 
\sum_{s \in \mathbb{Z}/p\mathbb{Z}} \left({p \over (p^{-1}b)s + c}\right)=0.
\end{eqnarray*}
Hence we obtain the desired equation (12). 
(III) Suppose that $\mathfrak{d} \equiv 0 \pmod{p}$ and $\left({\mathfrak{d}_0|\mathfrak{d}|/p^2\over p}\right)=-1$. We note that we cannot prove the assertion along the same lines as (I) and (II). 
However, we may fortunately take an alternative route as follows: 
We note that by virtue of Lemma 2.4 and the equation (11), there exists a fundamental discriminant $\mathfrak{d}_1$ with $(-1)^{k_0} \mathfrak{d}_1 >0$, 
 $\mathfrak{d}_1 \not\equiv 0 \pmod{p}$ and $c_{|\mathfrak{d}_1|}(\vartheta_{\mathfrak{d}_0}(f_{2{k_0}}))\ne 0$. 
Then by taking the $\mathfrak{d}$-th Shintani lifting 
 $\vartheta_{\mathfrak{d}}(f_{2k})$ as well as $\vartheta_{\mathfrak{d}_0}(f_{2k})$, the equation (9) yields 
\begin{eqnarray*}
c_{|\mathfrak{d}|}(\vartheta_{\mathfrak{d}_0}(f_{2k}))
&=& {
c_{|\mathfrak{d}_1|}(\vartheta_{\mathfrak{d}}(f_{2k})) \cdot c_{|\mathfrak{d}_0|}(\vartheta_{\mathfrak{d}_0}(f_{2k})) \over 
c_{|\mathfrak{d}_1|}(\vartheta_{\mathfrak{d}_0}(f_{2k}))} \\
&=&{
\left(1- \left({\,\mathfrak{d}_1\, \over p}\right) \beta_p(f_{2k})\, p^{-k}\right)
c_{|\mathfrak{d}_1|}(\vartheta_{\mathfrak{d}}(f_{2k})) \cdot c_{|\mathfrak{d}_0|}(\vartheta_{\mathfrak{d}_0}(f_{2k})) \over 
\left(1- \left({\,\mathfrak{d}_1\, \over p}\right) \beta_p(f_{2k})\, p^{-k}\right)
c_{|\mathfrak{d}_1|}(\vartheta_{\mathfrak{d}_0}(f_{2k}))},  
\end{eqnarray*}
where in the second equation of the above, we note that the $p$-adic interpolation properties of 
the numerator and the denominator 
have been already proved at the previous steps (I) and (II). 
Therefore,  
by taking a smaller analytic neighborhood $
\mathfrak{U}_0$ of $P_0 \in \mathfrak{X}_{\rm alg}(R^{\rm ord})$ if it's necessary to avoid possible  vanishing of the denominator, we obtain that 
the element 
\[
{\bf c}(\mathfrak{d}_0;\,{|\mathfrak{d}|}):={
{\bf b}(\mathfrak{d};\,{|\mathfrak{d}_1|}) \cdot 
{\bf b}(\mathfrak{d}_0;\,{|\mathfrak{d}_0|})
\over {\bf b}(\mathfrak{d}_0;\,{|\mathfrak{d}_1|})} \in (\widetilde{R}^{\rm ord})_{(\widetilde{P}_0)}
\]
defines a quotient of analytic functions on $\mathfrak{U}_0$ and satisfies the desired interpolation property. We complete the proof. 
\end{proof}

\section{Main results}

In this section, we construct a $\Lambda$-adic Duke-Imamo{\={g}}lu lifting for the universal ordinary $p$-stabilized newform ${\bf f}_{\rm ord} \in R^{\rm ord}[[q]]$. 
Therefore, we first introduce a suitable $p$-stabilization process for the Duke-Imamo{\={g}}lu lifting of ordinary 
Hecke eigenforms in $\mathscr{S}_{2k}({\rm SL}_2(\mathbb{Z}))$. 
Moreover, to consider the $p$-adic interpolation problem in the sequel, we give an explicit form of its Fourier expansion. 

\vspace*{3mm}
Suppose that positive integers $n$ and $k$ are fixed as $n \equiv k \pmod{2}$.  
Let $f
 \in \mathscr{S}_{2k}({\rm SL}_2(\mathbb{Z}))$ be a normalized Hecke eigenform ordinary at $p$, and $h
\in \mathscr{S}_{k+1/2}^{+}(\Gamma_0(4))^{(1)}$ a corresponding Hecke eigenform via the Shimura correspondence. As mentioned in \S\S 2.1, the Duke-Imamo{\={g}}lu lifting  
${\rm Lift}^{(2n)}(f) \in \mathscr{S}_{k+n}({\rm Sp}_{4n}(\mathbb{Z}))$ 
is characterized as a Hecke eigenform such that for each prime $l$, the Satake parameter $(\psi_0(l),\,\psi_1(l),\,\cdots,\,\psi_{2n}(l)) \in (\overline{\mathbb{Q}}^{\times})^{2n+1}/W_{2n}$ is written explicitly in terms of 
the ordered pair $(\alpha_{l}(f), \beta_{l}(f))$ 
satisfying 
$X^2 - a_l(f) X + l^{2k-1}=(X- \alpha_l(f))(X -\beta_l(f))$ and $v_l(\alpha_l(f)) \le v_l(\beta_l(f))$ (cf. Remark 2.4). In particular, if $l=p$, the ordinarity condition implies that $\alpha_p(f)$ is a $p$-adic unit, and hence 
$v_p(\beta_p(f))=2k-1$. Then we recall that 
the Hecke polynomial $\Phi_p(Y)\in \overline{\mathbb{Q}}_p^{\times}[Y]$ associated with ${\rm Lift}^{(2n)}(f)$ at $p$ is 
defined as 
\[
\Phi_p(Y):=(Y- \psi_0(p))\prod_{r=1}^{2n}\, \prod_{1 \le i_1 < \cdots < i_r \le 2n}
(Y-\psi_0(p)\psi_{i_1\!}(p) \cdots \psi_{i_r\!}(p)).  
\] 
We note that the equation 
(7) yields that $\psi_0(p)=p^{nk-n(n+1)/2}$ and the product
\begin{equation}
\psi_0(p) \prod_{i=1}^{n} \psi_i(p) 
=\alpha_p(f)^{n}
\end{equation}
is a unique unit $p$-adic root of $\Phi_p(Y)$. Put 
\begin{eqnarray*}
\Phi_p^{*}(Y)
&:=&
\Phi_p(Y) \cdot \{(Y-\psi_0(p))(Y - \psi_0(p) 
\prod_{i=1}^{n} \psi_i(p)
 )
\}^{-1} \\
&=&\prod_{r=1}^{2n}\, \prod_{\scriptstyle 1 \le i_1 < \cdots < i_r \le 2n, 
\atop {\scriptstyle (i_1,\cdots,i_n) \ne (1,\cdots,n)}}
(Y-\psi_0(p)\psi_{i_1\!}(p) \cdots \psi_{i_r\!}(p)), 
\end{eqnarray*}
and
\begin{eqnarray*}
\Psi_p^{*}(Y) 
&:=&
(Y- \psi_0(p) \left(\prod_{i=1}^{n} \psi_i(p)\right) \psi_{2n}(p))  \\
&& \times \prod_{j=1}^{n-1} (Y - \psi_0(p) \left(\prod_{i=1}^{n} \psi_i(p)\right) \psi_{2n-j}(p)\psi_{2n-j+1}(p)) \nonumber \\
&& \times
 (Y - \psi_0(p) \left(\prod_{i=1}^{n-1} \psi_i(p)\right) \psi_{2n}(p)). \nonumber 
\end{eqnarray*}
Obviously, we have $\Psi_p^{*}(Y)\, |\, \Phi_p^{*}(Y)\, |\, \Phi_p(Y)$ and it follows from the equation (7) that 
$\Phi_p^{*}(Y)$ and $\Psi_p^{*}(Y)$ can be taken of the forms 
in Theorem 1.3. 

\vspace*{3mm}
On the other hand, 
the following Hecke operators at $p$ are fundamental in the $p$-adic theory of Siegel modular forms of arbitrary genus $g \ge 1$:  
\[
U_{p,i}:=
\left\{\begin{array}{ll}
{I_{2g}\, 
{\rm diag}(\underbrace{1,\cdots,1}_{g},\underbrace{p,\cdots,p}_{g})
\, I_{2g}}, & \textrm{ if }i=0, \\[5mm]
{I_{2g}\, 
{\rm diag}(\underbrace{1,\cdots,1}_{i}\underbrace{p,\cdots,p}_{g-i},
\underbrace{p^2,\cdots,p^2}_{i}\underbrace{p,\cdots,p}_{g-i})
\, I_{2g}}, & 
\textrm{ if }1 \le i \le g-1, 
\end{array}\right.
\]
where $I_{2g}
:=\{M \in {\rm GSp}_{2g}(\mathbb{Z}_p)\, | \, M \textrm{ mod }p \in {\rm B}_{2g}(\mathbb{Z}/p\mathbb{Z})\}$. 
We note that these Hecke operators $U_{p,0}$, $U_{p,1},\,\cdots,\, U_{p,g-1}$ generate the Hecke algebra 
$$\mathscr{H}_p(I_{2g}, {\rm S}_{2g}):=\{\, I_{2g} M I_{2g}\, \,|\,M \in I_{2g} \backslash {\rm S}_{2g}
 / I_{2g}\,\}, $$
where ${\rm S}_{2g} \subset {\rm GSp}_{2g}(\mathbb{Q}_p)
$ is a semi-group such that  
\[
[I_{2g} \cap M^{-1}{\rm S}_{2g}M : I_{2g}],\, 
[I_{2g} \cap M^{-1}{\rm S}_{2g}M : M^{-1}{\rm S}_{2g}M] < +\infty
\] 
for all $M \in {\rm S}_{2g}$. 
In particular, we are interested in the operator $U_{p,0}$ which plays a central role in these operators. 
For instance, if $F=
\sum_{T \ge 0} A_{T}(F) q^T \in \mathscr{M}_{\kappa}(\Gamma_0(N))^{(g)}$ with $\kappa \ge 0$ and $N \ge 1$, 
the action of $U_{p,0}$ on $F$ admits the Fourier expansion
\begin{equation}
F|_{\kappa} U_{p,0}=\sum_{T \ge 0} A_{pT}(F) q^T, 
\end{equation}
which can be regarded as a generalization of the Atkin-Lehner $U_p$-operator acting on elliptic modular forms. 
Then we easily see that 
\[
F |_{\kappa} U_{p,0} \in \left\{
\begin{array}{ll}
\mathscr{M}_{\kappa}(\Gamma_0(N))^{(g)}, & \textrm{ if }p \,|\, N, \\[2mm]
\mathscr{M}_{\kappa}(\Gamma_0(Np))^{(g)}, & \textrm{ if }p \!\!\not|\, N,
\end{array}
\right.
\]
and the action of $U_{p,0}$ commutes with those of all Hecke operators at each prime $l \ne p$. 
For further details on the operator $U_{p,0}$
, see \cite{A-Z95,Boe05}. 

\vspace*{3mm}
Now, we define a $p$-stabilization of ${\rm Lift}^{(2n)}(f)$ by 
\begin{equation}
{\rm Lift}^{(2n)}(f)^{*}:={\Psi_p^{*}(\alpha_p(f)^n) \over \Phi_p^{*}(\alpha_p(f)^n)}\cdot {\rm Lift}^{(2n)}(f)|_{k+n}\,\Phi_p^{*}(U_{p,0}). 
\end{equation}
Obviously, the equation (13) yields that 
$$
{\Psi_p^{*}(\alpha_p(f)^n) \over 
\Phi_p^{*}(\alpha_p(f)^n)}
\ne 0. 
$$
In particular, if $n=1$, we have $\Psi_p^{*}(Y
) = \Phi_p^{*}(Y
)=(Y-p^k)(Y-\beta_p(f))$, and hence ${\Psi_p^{*}(\alpha_p(f)) /
\Phi_p^{*}(\alpha_p(f))}=1$. 
It also follows immediately from the above-mentioned properties of $U_{p,0}$ 
that 
${\rm Lift}^{(2n)}(f)^{*} \in \mathscr{S}_{k+n}(\Gamma_0(p))^{(2n)}$ is a Hecke eigenform such that 
for each prime $l \ne p$, the Hecke eigenvalues coincide with those of ${\rm Lift}^{(2n)}(f)$. 

\vspace*{3mm}
Then the Fourier expansion of ${\rm Lift}^{(2n)}(f)^{*}$ is written explicitly as follows: 

\begin{thm}
Under the same notation and assumptions as above, 
if $0<T \in {\rm Sym}_{2n}^{*}(\mathbb{Z})$ satisfies $\mathfrak{D}_T
=\mathfrak{d}_{T}\mathfrak{f}_{T}^2$, then we have 
\begin{eqnarray*}
A_{T}({\rm Lift}^{(2n)}(f)^{*})
&=& 
\left( 
1 - \left(
{\mathfrak{d}_T \over p}
\right)
\beta_p(f) p^{-k} \right) 
c_{|\mathfrak{d}_T|}(h)   \\
&& \times\, \alpha_p(f)^{v_p(\mathfrak{f}_T)+n(n+1)} \prod_{\scriptstyle \,\,l | \mathfrak{f}_{T}, \atop {\scriptstyle l \ne p}} 
\alpha_l(f)^{v_l(\mathfrak{f}_T)} F_l(T;\, l^{-k-n}\beta_l(f)). 
\end{eqnarray*}
\end{thm}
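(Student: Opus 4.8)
The plan is to unwind the definition (15) into a finite linear combination of Fourier coefficients of the unstabilised lift, thereby reducing the statement to a single local identity for Siegel series at $p$, and then to settle that identity via the explicit factorisation (4) together with the functional equation (5). To start, I compute $A_T({\rm Lift}^{(2n)}(f)^{*})$ straight from (15): writing $\Phi_p^{*}(Y)=\sum_{j}c_j Y^{j}$ and iterating the Fourier rule (14), so that $F\mid_{k+n}U_{p,0}^{j}=\sum_{S\ge 0}A_{p^{j}S}(F)\,q^{S}$, one obtains
\[
A_T({\rm Lift}^{(2n)}(f)^{*})=\frac{\Psi_p^{*}(\alpha_p(f)^{n})}{\Phi_p^{*}(\alpha_p(f)^{n})}\sum_{j}c_j\,A_{p^{j}T}({\rm Lift}^{(2n)}(f)).
\]
Next I substitute Ikeda's formula (6) for each $A_{p^{j}T}({\rm Lift}^{(2n)}(f))$. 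Since $\mathfrak{D}_{p^{j}T}=(-1)^{n}\det(2p^{j}T)=p^{2nj}\mathfrak{D}_T$, we have $\mathfrak{d}_{p^{j}T}=\mathfrak{d}_T$ and $\mathfrak{f}_{p^{j}T}=p^{nj}\mathfrak{f}_T$; hence $c_{|\mathfrak{d}_{p^{j}T}|}(h)=c_{|\mathfrak{d}_T|}(h)$, while for every prime $l\ne p$ we have $v_l(\mathfrak{f}_{p^{j}T})=v_l(\mathfrak{f}_T)$ and $p^{j}\in\mathbb{Z}_l^{\times}$, so that $F_l(p^{j}T;X)=F_l(T;X)$; thus $c_{|\mathfrak{d}_T|}(h)$ and the product over $l\mid\mathfrak{f}_T$ with $l\ne p$ factor out of the sum. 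Dividing through by these common factors and by $\alpha_p(f)^{v_p(\mathfrak{f}_T)}$ (the claimed equality being trivial if any of them vanishes), and using $v_p(\mathfrak{f}_{p^{j}T})=v_p(\mathfrak{f}_T)+nj$, the theorem becomes equivalent to the purely local identity
\[
\frac{\Psi_p^{*}(\alpha_p(f)^{n})}{\Phi_p^{*}(\alpha_p(f)^{n})}\sum_{j}c_j\,\alpha_p(f)^{nj}\,F_p(p^{j}T;\,p^{-k-n}\beta_p(f))=\Bigl(1-\Bigl(\tfrac{\mathfrak{d}_T}{p}\Bigr)\beta_p(f)\,p^{-k}\Bigr)\,\alpha_p(f)^{\,n(n+1)}.
\]

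This local identity is the heart of the proof, and I expect it to be the main obstacle. It encodes the ``semi-ordinary collapse'': the $\Phi_p^{*}$-weighted sum of the Siegel series $F_p(p^{j}T;X)$ loses all dependence on the fine $p$-adic structure of $T$ beyond the symbol $\left(\tfrac{\mathfrak{d}_T}{p}\right)$, and in particular $F_p(T;\cdot)$ disappears from the answer. The structural input I would use is the rationality of the generating series $\sum_{j\ge 0}F_p(p^{j}T;X)\,Y^{j}$ --- equivalently, through (4), of $\sum_{j\ge 0}b_p(p^{j}T;X)\,Y^{j}$ --- whose denominator is governed, up to a monomial normalisation, by the Hecke polynomial $\Phi_p(Y)$ of (7); equivalently, the minimal polynomial of $U_{p,0}$ on the span of $\{{\rm Lift}^{(2n)}(f)\mid_{k+n}U_{p,0}^{j}\}_{j\ge 0}$ divides $\Phi_p(Y)$ (Andrianov's theory of Hecke operators acting on Fourier coefficients, cf. \cite{A-Z95}; alternatively one may invoke Katsurada's explicit formula for Siegel series). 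Granting this, $\sum_{j}c_j\,\alpha_p(f)^{nj}\,F_p(p^{j}T;X)$ is --- up to the factor $\Phi_p^{*}(\alpha_p(f)^{n})$ coming from clearing the two linear factors removed in passing from $\Phi_p$ to $\Phi_p^{*}$ --- the Fourier coefficient of the $U_{p,0}$-stabilisation of ${\rm Lift}^{(2n)}(f)$ with eigenvalue $\psi_0(p)\psi_1(p)\cdots\psi_n(p)=\alpha_p(f)^{n}$. Feeding this through the factorisation (4): the denominator $1-\left(\tfrac{\mathfrak{d}_T}{p}\right)p^{n}X$, evaluated at $X=p^{-k-n}\beta_p(f)$, contributes exactly $1-\left(\tfrac{\mathfrak{d}_T}{p}\right)\beta_p(f)\,p^{-k}$, while the numerator $(1-X)\prod_{i=1}^{n}(1-p^{2i}X^{2})$ contributes only monomials in $p$, $\alpha_p(f)$ and $\beta_p(f)$.

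It then remains to book-keep the scalars. Using $\psi_0(p)=p^{nk-n(n+1)/2}$, the identity (13) in the form $\psi_1(p)\cdots\psi_n(p)=\alpha_p(f)^{n}p^{-nk+n(n+1)/2}$, the relation $\alpha_p(f)\beta_p(f)=p^{2k-1}$, and the functional equation (5) --- which gives $\alpha_p(f)^{v_p(\mathfrak{f}_T)}F_p(T;p^{-k-n}\beta_p(f))=\beta_p(f)^{v_p(\mathfrak{f}_T)}F_p(T;p^{-k-n}\alpha_p(f))$ --- the powers of $p$ left over from the numerator of (4), together with the normalising ratio $\Psi_p^{*}(\alpha_p(f)^{n})/\Phi_p^{*}(\alpha_p(f)^{n})$ built into (15), collapse exactly to $\alpha_p(f)^{n(n+1)}$; this is a formal computation with the explicit shapes of $\Phi_p^{*}$ and $\Psi_p^{*}$ recorded in Theorem 1.3, which I would carry out but not reproduce. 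A more hands-on route --- probably the cleanest in practice, and bypassing Andrianov's machinery --- starts instead from Kohnen's refinement (9): the $p$-part of (6) for $p^{j}T$ reads $\sum_{i}\phi_{p^{j}T}\!\bigl(p^{\,v_p(\mathfrak{f}_T)+nj-i}\bigr)\,p^{(k-1)(v_p(\mathfrak{f}_T)+nj-i)}\,a_{p^{i}}(f)$, so the local identity becomes a combinatorial relation between $\phi_{p^{j}T}$ and $\phi_T$ together with the Hecke recursion $a_{p^{i+1}}(f)=a_p(f)a_{p^{i}}(f)-p^{2k-1}a_{p^{i-1}}(f)$; one then checks that the net effect of $\Phi_p^{*}(U_{p,0})$, after normalisation, is simply to substitute $a_{p^{i}}(f)\mapsto\alpha_p(f)^{i}$ in the $p$-part of (6) and to multiply $c_{|\mathfrak{d}_T|}(h)$ by $1-\left(\tfrac{\mathfrak{d}_T}{p}\right)\beta_p(f)\,p^{-k}$ --- the very factor already encountered in Proposition 3.7 --- which is the asserted formula.

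As a built-in consistency check, the formula just obtained satisfies $A_{p^{j}T}({\rm Lift}^{(2n)}(f)^{*})=\alpha_p(f)^{nj}A_T({\rm Lift}^{(2n)}(f)^{*})$ for all $j\ge 1$, because replacing $T$ by $p^{j}T$ multiplies the right-hand side by $\alpha_p(f)^{nj}$ through $v_p(\mathfrak{f}_{p^{j}T})=v_p(\mathfrak{f}_T)+nj$ and alters nothing else; this recovers the $U_{p,0}$-eigenvalue relation of Theorem 1.3(i), and it confirms in passing that $A_T({\rm Lift}^{(2n)}(f)^{*})$ is compatible with cuspidality and with the away-from-$p$ Hecke eigenvalues of an element of $\mathscr{S}_{k+n}(\Gamma_0(p))^{(2n)}$.
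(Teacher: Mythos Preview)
Your proposal is correct and follows essentially the same route as the paper: both arguments expand (15) via (14), factor out $c_{|\mathfrak{d}_T|}(h)$ and the away-from-$p$ Euler product using $\mathfrak{d}_{p^jT}=\mathfrak{d}_T$ and $F_l(p^jT;X)=F_l(T;X)$ for $l\ne p$, and reduce the theorem to a single local identity at $p$ expressing the $\Phi_p^{*}$-weighted sum of Siegel series $F_p(p^{j}T;X)$ as a $T$-independent scalar times $1-\bigl(\tfrac{\mathfrak{d}_T}{p}\bigr)p^{n}X$.

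The only noteworthy differences are in how the local identity is dispatched. The paper rewrites it in terms of the auxiliary variables $\xi_{p,i}(X)$, then \emph{proves the case $n=1$ by a direct computation} with Kaufhold's explicit formula for $F_p(T;X)$ on ${\rm Sym}_2^{*}$, and for $n>1$ passes through (4) to an equivalent statement about the formal power series $b_p(p^{j}T;X)$ and cites Zharkovskaya \cite{Zha75}, Kitaoka \cite{Kit86} and B{\"o}cherer--Sato \cite{B-S87}. Your structural argument via the rationality of $\sum_{j\ge 0}b_p(p^{j}T;X)Y^{j}$ and Andrianov's theory is the same idea dressed differently (those three references are precisely where the relevant rationality and denominator computation are carried out), and your observation that the denominator $1-\bigl(\tfrac{\mathfrak{d}_T}{p}\bigr)p^{n}X$ of (4) produces the factor $1-\bigl(\tfrac{\mathfrak{d}_T}{p}\bigr)\beta_p(f)p^{-k}$ is exactly the mechanism at play. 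What your write-up lacks, compared with the paper, is the explicit $n=1$ verification and the concrete scalar bookkeeping you defer; your alternative route through Kohnen's refinement (9) is not pursued in the paper's proof (though (9) is used later, in Theorem~4.4).
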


\begin{proof}
By virtue of the equations (6) and (13), 
we obtain 
\begin{eqnarray*}
\lefteqn{
A_{T}({\rm Lift}^{(2n)}(f)^{*})} \\
&=&
{\Psi_p^{*}(\alpha_p(f)^n) \over \Phi_p^{*}(\alpha_p(f)^n)}\,c_{|\mathfrak{d}_{T}|}(h) 
\prod_{\scriptstyle \,l\, | \mathfrak{f}_{T}, \atop {\scriptstyle l \ne p}} 
\alpha_l(f)^{v_l(\mathfrak{f}_T)}\, F_l(T;\,l^{-k-n}\beta_l(f)) 
\\
&& \times
\sum_{j=0}^{2^{2n}-2} (-1)^j 
s_j\!\left(\left\{\psi_0(p)\psi_{i_1}\!(p) \cdots \psi_{i_r}\!(p)\, \left|\, 
\begin{array}{c}
1\le r \le 2n, \\
1 \le i_1 < \cdots < i_r \le 2n, \\
(i_1,\cdots,i_n)\ne (1,\cdots,n)
\end{array}\right.
\right\}\right) \\
&& \hspace*{12mm}\times \alpha_p(f)^{v_p(\mathfrak{f}_T)+n(2^{2n}-2)-nj}
F_p(p^{2^{2n}-2-j}T;\,p^{-k-n} \beta_p(f)) \\
&=&
c_{|\mathfrak{d}_{T}|}(h) 
\alpha_p(f)^{v_p(\mathfrak{f}_T)+n(n+1)} 
\prod_{\scriptstyle \,l\, | \mathfrak{f}_{T}, \atop {\scriptstyle l \ne p}} 
\alpha_l(f)^{v_l(\mathfrak{f}_T)}\, F_l(T;\,l^{-k-n}\beta_l(f)) 
\\
&& \times 
{\alpha_p(f)^{-n(n+1)}\Psi_p^{*}(\alpha_p(f)^n) \over 
\alpha_p(f)^{-n(2^{2n}-2)}
\Phi_p^{*}(\alpha_p(f)^n)} \\
&& \times
\sum_{j=0}^{2^{2n}-2} (-1)^j 
s_j\!\left(\left\{\alpha_p(f)^{-n}\psi_0(p)\psi_{i_1}\!(p) \cdots \psi_{i_r}\!(p)\, \left|\, 
\begin{array}{c}
1\le r \le 2n, \\
1 \le i_1 < \cdots < i_r \le 2n, \\
(i_1,\cdots,i_n)\ne (1,\cdots,n)
\end{array}\right.
\right\}\right) \\
&& \hspace*{12mm}\times 
F_p(p^{2^{2n}-2-j}T;\,p^{-k-n} \beta_p(f)), 
\end{eqnarray*}
where $s_j(\{X_1,\cdots,X_{2^{2n}-2}\})$ denotes the $j$-th elementary symmetric polynomial in variables $X_1,\,\cdots,\,X_{2^{2n}-2}$. Here by the equation (7), we easily see that 
$\alpha_p(f)^{-n}\psi_0(p)=p^{n(n+1)/2}\{\beta_p(f) p^{-k-n}\}^n$,  
\[
\psi_i(p)=\left\{\begin{array}{ll}
p^{-i}\{\beta_p(f) p^{-k-n}\}^{-1}, & \textrm{if }1 \le i \le n, \\[2mm]
p^{i}\{\beta_p(f) p^{-k-n}\}, & \textrm{if }n+1 \le i \le 2n,   
\end{array}\right.
\]
and \begin{eqnarray*}
\lefteqn{
\alpha_p(f)^{-n(n+1)}\Psi_p^{*}(\alpha_p(f)^n)
} \\
&=&
(1-p^{2n} \{\beta_p(f) p^{-k-n}\})\prod_{i=1}^{n}(1- p^{2n+2i-1}\{\beta_p(f) p^{-k-n}\}^2). 
\end{eqnarray*}
Put 
\[
\xi_{p,i}(X):=\left\{\begin{array}{ll}
p^{n(n+1)/2}X^n,  & \textrm{if }i =0, \\[2mm]
p^{-i}X^{-1}, & \textrm{if }1 \le i \le n, \\[2mm]
p^{i}X, & \textrm{if }n+1 \le i \le 2n,   
\end{array}\right.
\]
Hence it suffices to show that the equation  
\begin{eqnarray*}
\lefteqn{(1-p^{2n}X)\prod_{i=1}^{n}(1- p^{2n+2i-1}X^2) } \\
&\times& \hspace*{-2mm}\sum_{j=0}^{2^{2n}-2} (-1)^j  
s_j
\!\!\left(\left\{\xi_{p,0}(X)\xi_{p,i_1\!}(X) \cdots \xi_{p,i_r\!}(X)\, \left|\, 
\begin{array}{c}
1\le r \le 2n, \\
1 \le i_1 < \cdots < i_r \le 2n, \\
(i_1,\cdots,i_n)\ne (1,\cdots,n)
\end{array}\right.
\right\}\right) 
\nonumber \\[3mm]
&& \hspace*{10mm}
\times F_p(p^{2^{2n}-2-j}T;\,X) \\[3mm]
&=&\left(1 - \left(
{\mathfrak{d}_{T} \over p}
\right)
 p^n X\right)\prod_{r=1}^{2n}\, \prod_{\scriptstyle 1 \le i_1 < \cdots < i_r \le 2n, 
\atop {\scriptstyle (i_1,\cdots,i_n) \ne (1,\cdots,n)}}
{(1-\xi_{p,0}(X)\xi_{p,i_1\!}(X) \cdots \xi_{p,i_r\!}(X))} \nonumber
\end{eqnarray*}
holds for each $T \in {\rm Sym}_{2n}^{*}(\mathbb{Z})$ with $T>0$. 
First, we prove the assertion for $n=1$. 
In that case, 
it suffices to show the equation 
\[
F_p(p^2 T;\, X)
-(p^2 X+p^3 X^2) F_p(p T;\,X) 
+p^5 X^3 F_p(T;\,X)
=1 - \left(
{\mathfrak{d}_{T} \over p}
\right)
 p X.
 \]
Indeed, for each $T \in {\rm Sym}_2^{*}(\mathbb{Z})$ with $T >0$, the polynomial $F_p(T;\,X)$ admits the explicit form 
\begin{eqnarray*}
\lefteqn{
F_p(T;\,X) } \\
&=& \sum_{i=0}^{v_p(\mathfrak{m}_T)} (p^2 X)^i \left\{ \sum_{j=0}^{v_p(\mathfrak{f}_T) - i} (p^3 X^2)^j
-\left({\mathfrak{d}_T \over p}\right)\,pX 
\sum_{j=0}^{v_p(\mathfrak{f}_T) - i-1} (p^3 X^2)^j \right\}, 
\end{eqnarray*}
where $\mathfrak{m}_T = \max\{ 0<m \in \mathbb{Z} \,|\, m^{-1} T \in {\rm Sym}_2^{*}(\mathbb{Z})\}$ (cf. \cite{Kau59
}). Therefore we have 
\begin{eqnarray*}
\lefteqn{
F_p(p^2 T;\, X)
-(p^2 X+p^3 X^2) F_p(p T;\,X) 
+p^5 X^3 F_p(T;\,X) } \\
&=& \sum_{i=0}^{v_p(\mathfrak{m}_T)+2} (p^2 X)^i 
\left\{ \sum_{j=0}^{v_p(\mathfrak{f}_T) - i+2} (p^3 X^2)^j
-\left(
{\mathfrak{d}_{T} \over p}
\right)\,pX 
\sum_{j=0}^{v_p(\mathfrak{f}_T) - i+1} (p^3 X^2)^j \right\} \\
&& \hspace*{5mm} -\sum_{i=0}^{v_p(\mathfrak{m}_T)+1} (p^2 X)^{i+1} 
\left\{ \sum_{j=0}^{v_p(\mathfrak{f}_T) - i+1} (p^3 X^2)^j
-\left(
{\mathfrak{d}_{T} \over p}
\right)\,pX 
\sum_{j=0}^{v_p(\mathfrak{f}_T) - i} (p^3 X^2)^j \right\} \\
&& - \sum_{i=0}^{v_p(\mathfrak{m}_T)+1} (p^2 X)^i 
\left\{ \sum_{j=0}^{v_p(\mathfrak{f}_T) - i+1} (p^3 X^2)^{j+1}
-\left(
{\mathfrak{d}_{T} \over p}
\right)\,pX 
\sum_{j=0}^{v_p(\mathfrak{f}_T) - i} (p^3 X^2)^{j+1} \right\} \\
&& \hspace*{5mm} + \sum_{i=0}^{v_p(\mathfrak{m}_T)} (p^2 X)^{i+1} 
\left\{ \sum_{j=0}^{v_p(\mathfrak{f}_T) - i} (p^3 X^2)^{j+1}
-\left(
{\mathfrak{d}_{T} \over p}
\right)\,pX 
\sum_{j=0}^{v_p(\mathfrak{f}_T) - i-1} (p^3 X^2)^{j+1} \right\} \\
&=& \sum_{i=0}^{v_p(\mathfrak{m}_T)+2} (p^2 X)^i 
\left\{ \sum_{j=0}^{v_p(\mathfrak{f}_T) - i+2} (p^3 X^2)^j
-\left(
{\mathfrak{d}_{T} \over p}
\right)\,pX 
\sum_{j=0}^{v_p(\mathfrak{f}_T) - i+1} (p^3 X^2)^j \right\} \\
&& \hspace*{5mm} -\sum_{i=1}^{v_p(\mathfrak{m}_T)+2} (p^2 X)^{i} 
\left\{ \sum_{j=0}^{v_p(\mathfrak{f}_T) - i+2} (p^3 X^2)^j
-\left(
{\mathfrak{d}_{T} \over p}
\right)\,pX 
\sum_{j=0}^{v_p(\mathfrak{f}_T) - i+1} (p^3 X^2)^j \right\} \\
&& - \sum_{i=0}^{v_p(\mathfrak{m}_T)+1} (p^2 X)^i 
\left\{ \sum_{j=0}^{v_p(\mathfrak{f}_T) - i+1} (p^3 X^2)^{j+1}
-\left(
{\mathfrak{d}_{T} \over p}
\right)\,pX 
\sum_{j=0}^{v_p(\mathfrak{f}_T) - i} (p^3 X^2)^{j+1} \right\} \\
&& \hspace*{5mm} + \sum_{i=1}^{v_p(\mathfrak{m}_T)+1} (p^2 X)^i 
\left\{ \sum_{j=0}^{v_p(\mathfrak{f}_T) - i+1} (p^3 X^2)^{j+1}
-\left(
{\mathfrak{d}_{T} \over p}
\right)\,pX 
\sum_{j=0}^{v_p(\mathfrak{f}_T) - i} (p^3 X^2)^{j+1} \right\} \\ %
&=& 
\left\{ \sum_{j=0}^{v_p(\mathfrak{f}_T)+2} (p^3 X^2)^j
-\left(
{\mathfrak{d}_{T} \over p}
\right)\,pX 
\sum_{j=0}^{v_p(\mathfrak{f}_T)+1} (p^3 X^2)^j \right\} \\
&& \hspace*{5mm} - \left\{ \sum_{j=0}^{v_p(\mathfrak{f}_T)+1} (p^3 X^2)^{j+1}
-\left(
{\mathfrak{d}_{T} \over p}
\right)\,pX 
\sum_{j=0}^{v_p(\mathfrak{f}_T)} (p^3 X^2)^{j+1} \right\} \\
&=& 
\left\{ \sum_{j=0}^{v_p(\mathfrak{f}_T)+2} (p^3 X^2)^j
-\left(
{\mathfrak{d}_{T} \over p}
\right)\,pX 
\sum_{j=0}^{v_p(\mathfrak{f}_T)+1} (p^3 X^2)^j \right\} \\
&& \hspace*{5mm} - \left\{ \sum_{j=1}^{v_p(\mathfrak{f}_T)+2} (p^3 X^2)^j
-\left(
{\mathfrak{d}_{T} \over p}
\right)\,pX 
\sum_{j=1}^{v_p(\mathfrak{f}_T)+1} (p^3 X^2)^j \right\} \\
&=& 1 - \left(
{\mathfrak{d}_{T} \over p}
\right) p X.  
\end{eqnarray*}
For each $n >1$, it follows from the equation (4) that the desired equation is equivalent to the following: 
\begin{eqnarray*}
\lefteqn{\sum_{j=0}^{2^{2n}-2} (-1)^j  
s_j\!
\left(\left\{\xi_{p,0}(X)\xi_{p,i_1\!}(X) \cdots \xi_{p,i_r\!}(X)\, \left|\, 
\begin{array}{c}
1\le r \le 2n, \\
1 \le i_1 < \cdots < i_r \le 2n, \\
(i_1,\cdots,i_n)\ne (1,\cdots,n)
\end{array}\right.
\right\}\right)} \\[2mm]
&& \hspace*{2mm}
\times b_p(p^{2^{2n}-2-j}T;\,X) \\[2mm]
&=& { 
\displaystyle(1-X)\prod_{i=1}^{n}(1- p^{2i}X^2) 
\prod_{r=1}^{2n}\, \prod_{\scriptstyle 1 \le i_1 < \cdots < i_r \le 2n, 
\atop {\scriptstyle (i_1,\cdots,i_n) \ne (1,\cdots,n)}}
(1-\xi_{p,0}(X)\xi_{p,i_1\!}(X) \cdots \xi_{p,i_r\!}(X))
\over 
(1-p^{2n}X)\displaystyle\prod_{i=1}^{n}(1- p^{2n+2i-1}X^2)
}. \nonumber
\end{eqnarray*}
This can be proved by making use of the same arguments as in \cite{Zha75} and 
\cite{Kit86} (see also \cite{B-S87}).    
Now we complete the proof. 
\end{proof}

\vspace*{2mm}
As a consequence of Theorem 4.1, we also have  

\begin{cor}Under the same assumption as above, 
we have
\[
{\rm Lift}^{(2n)}(f)^{*}|_{k+n}U_{p,0}
 = 
 \alpha_p(f)^n 
\cdot {\rm Lift}^{(2n)}(f)^{*}. 
\]
\end{cor}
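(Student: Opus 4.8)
The plan is to read off both sides directly from their Fourier expansions. For any $F \in \mathscr{M}_{k+n}(\Gamma_0(p))^{(2n)}$, the Fourier-expansion formula $(14)$ for $U_{p,0}$ gives $A_T(F|_{k+n}U_{p,0}) = A_{pT}(F)$ for every $T \in {\rm Sym}_{2n}^{*}(\mathbb{Z})$; and since ${\rm Lift}^{(2n)}(f)^{*}$ is a cusp form, both $A_T({\rm Lift}^{(2n)}(f)^{*})$ and $A_{pT}({\rm Lift}^{(2n)}(f)^{*})$ vanish unless $T>0$. Hence it suffices to establish
\[
A_{pT}({\rm Lift}^{(2n)}(f)^{*}) = \alpha_p(f)^{n}\, A_{T}({\rm Lift}^{(2n)}(f)^{*}) \qquad \textrm{for all } 0 < T \in {\rm Sym}_{2n}^{*}(\mathbb{Z}).
\]

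First I would record how the discriminant data transform under $T \mapsto pT$. Since $\mathfrak{D}_{pT} = (-1)^{n}\det(2pT) = p^{2n}\mathfrak{D}_T = \mathfrak{d}_T\,(p^{n}\mathfrak{f}_T)^{2}$ and $p$ is odd, the quadratic field $\mathbb{Q}(\sqrt{\mathfrak{D}_{pT}})$ coincides with $\mathbb{Q}(\sqrt{\mathfrak{D}_T})$, so that $\mathfrak{d}_{pT} = \mathfrak{d}_T$ and $\mathfrak{f}_{pT} = p^{n}\mathfrak{f}_T$. In particular $v_p(\mathfrak{f}_{pT}) = v_p(\mathfrak{f}_T) + n$, whereas for every prime $l \ne p$ one has $v_l(\mathfrak{f}_{pT}) = v_l(\mathfrak{f}_T)$ and, since $p \in \mathbb{Z}_l^{\times}$ and $F_l(uT;\,X) = F_l(T;\,X)$ for $u \in \mathbb{Z}_l^{\times}$, also $F_l(pT;\,X) = F_l(T;\,X)$. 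Likewise $c_{|\mathfrak{d}_{pT}|}(h) = c_{|\mathfrak{d}_T|}(h)$ and $\left({\mathfrak{d}_{pT} \over p}\right) = \left({\mathfrak{d}_T \over p}\right)$.

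Now substitute these identities into the formula of Theorem 4.1 applied with $T$ replaced by $pT$ (legitimate since $pT$ is again a positive definite element of ${\rm Sym}_{2n}^{*}(\mathbb{Z})$). The leading factor $1 - \left({\mathfrak{d}_T \over p}\right)\beta_p(f)p^{-k}$, the coefficient $c_{|\mathfrak{d}_T|}(h)$, and the finite product over the primes $l \mid \mathfrak{f}_T$ with $l \ne p$ are all unchanged, while the power of $\alpha_p(f)$ increases from $\alpha_p(f)^{\,v_p(\mathfrak{f}_T)+n(n+1)}$ to $\alpha_p(f)^{\,v_p(\mathfrak{f}_T)+n+n(n+1)}$. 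Comparing with the formula of Theorem 4.1 for $A_T({\rm Lift}^{(2n)}(f)^{*})$ then gives precisely $A_{pT}({\rm Lift}^{(2n)}(f)^{*}) = \alpha_p(f)^{n}\,A_T({\rm Lift}^{(2n)}(f)^{*})$, and the corollary follows at once from $(14)$. There is no real obstacle here: Theorem 4.1 has already done all the work, and the only step demanding any care is the identity $\mathfrak{f}_{pT} = p^{n}\mathfrak{f}_T$, which is transparent exactly because $p$ is odd, so that the square factor $p^{2n}$ leaves the fundamental discriminant untouched.
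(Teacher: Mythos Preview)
Your proof is correct and follows exactly the approach the paper indicates: the paper simply says ``the desired equation follows immediately from Theorem~4.1,'' and you have spelled out precisely that immediate deduction via the Fourier-expansion formula~(14) and the transformation $\mathfrak{d}_{pT}=\mathfrak{d}_T$, $\mathfrak{f}_{pT}=p^{n}\mathfrak{f}_T$. Nothing is missing.
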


\vspace*{2mm}
Indeed, the desired equation follows immediately from Theorem 4.1. \hfill $\Box$

\vspace*{3mm}
We should mention that Courtieu-Panchishkin \cite{C-P04} stated a general philosophy 
of the $p$-stabilization for Siegel modular forms. In accordance with it, we may also consider another $p$-stabilization 
\begin{eqnarray*}
{\rm Lift}^{(2n)}(f)^{\dag}&:=& {\rm Lift}^{(2n)}(f)|_{k+n} (Y-\psi_0(p))\cdot\Phi_p^{*}(U_{p,0}) \\
&=& {\rm Lift}^{(2n)}(f)|_{k+n} \Phi_p(U_{p,0})\cdot(U_{p,0}-\psi_0(p)\prod_{i=1}^n \psi_i(p))^{-1}. 
\end{eqnarray*}
Since $U_{p,0}$ annihilates the Hecke polynomial $\Phi_p(X)$ (cf. Proposition 6.10 in \cite{A-Z95}), 
it follows from the equation (13) that 
\begin{eqnarray*}
{\rm Lift}^{(2n)}(f)^{\dag}|_{k+n}U_{p,0}
 &=& \psi_0(p) \prod_{i=1}^{n} \psi_i(p)
\cdot {\rm Lift}^{(2n)}(f)^{\dag} \\
&=&\alpha_p(f)^n 
\cdot {\rm Lift}^{(2n)}(f)^{\dag}.  
\end{eqnarray*}
We easily verify that $U_{p,0}$ annihilates $\Phi_p^*(X)\cdot (X-\psi_0(p)\prod_{i=1}^n \psi_{i}(p)) 
$ as well, and hence we may also prove Corollary 4.2 along the same line as above. 
Since we have 
\[
{\Psi_p^{*}(\alpha_p(f)^n) \over \Phi_p^{*}(\alpha_p(f)^n)}\cdot{\rm Lift}^{(2n)}(f)^{\dag} = (1- p^{nk-n(n+1)/2})\cdot 
{\rm Lift}^{(2n)}(f)^{*}, 
\]
${\rm Lift}^{(2n)}(f)^{*}$ and ${\rm Lift}^{(2n)}(f)^{\dag}$ are essentially the same, however, the former can be regarded as the 
principle $p$-stabilization for ${\rm Lift}^{(2n)}(f)$ and the Siegel Eisenstein series $E_{k+n}^{(2n)}$ (cf. \S\S5.1 below).  

\begin{rem}[semi-ordinarity]
Unfortunately, even if $f$ is ordinary (and so is the associated Galois representation), ${\rm Lift}^{(2n)}(f)$ does not admit the ordinary $p$-stabilization in the sense of Hida (cf. \cite{Hid02,Hid04}). 
That is, ${\rm Lift}^{(2n)}(f)$ may not be an eingenfunction of the Hecke operators $U_{p,0},\,U_{p,1}\cdots,\,U_{p,2n-1}$ 
such that all the eigenvalues are $p$-adic units. It is because the fact that the $p$-local spherical representation associated with ${\rm Lift}^{(2n)}(f)$ is not equal to any induced representation of an unramified character.  
On the other hand, Corollary 4.2 implies that 
there 
exists a $p$-stabilization 
${\rm Lift}^{(2n)}(f)^{*}$ so that the eigenvalue of $U_{p,0}$ is a $p$-adic unit. 
Following \cite{S-U06}, we refer to it as the {\it semi-ordinary} $p$-stabilization of ${\rm Lift}^{(2n)}(f)$. 
In fact, it turns out that this condition is sufficient to adapt the ordinary theory. Since the operator $U_{p,0}$ is given by the trace of the Frobenius operator acting on the ordinary part of the cohomology of the Siegel variety, there is no need to use the overconvergence of canonical subgroup. 
\end{rem}


Now, for the universal ordinary $p$-stabilized newform 
${\bf f}_{\rm ord}=\sum_{m \ge 1}{\bf a}_m q^m \in R^{\rm ord}[[q]]$ interpolating the Hida family $\{f_{2k}^{*}\}$, 
we construct its $\Lambda$-adic lifting interpolating 
the semi-ordinary $p$-stabilized Duke-Imamo{\={g}}lu lifting 
$\{{\rm Lift}(f_{2k})^{*}\}$: 

\begin{thm} 
For each integer $n \ge 1$, and a fixed integer ${k_0 > n+1}$ with $k_0 \equiv n \pmod{2}$, let
$\mathfrak{d}_0$ and $\mathfrak{U}_0$ be a pair of a fundamental discriminant and an analytic neighborhood of 
$P_0 \in \mathfrak{X}_{\rm alg}(R^{\rm ord})$ taken as in Proposition 3.7. For each integer $k \ge k_0$ with 
$k \equiv k_0 \pmod{2}$, we denote by ${\rm Lift}_{\mathfrak{d}_0}^{(2n)}(f_{2k})$ the Duke-Imamo{\={g}}lu lifting of $f_{2k}$ associated with the ${\mathfrak{d}_0}$-th Shintani lifting 
$\vartheta_{\mathfrak{d}_0}(f_{2k})$. 
Then for $\widetilde{P}_0=(k_0, \omega^{k_0})\in \mathfrak{X}_{\rm alg}(\widetilde{R}^{\rm ord})$ lying over $P_0$, there exist a formal Fourier expansion $${\bf F}=\sum_{T >0} {\bf a}_T\, q^T \in (\widetilde{R}^{\rm ord})_{(\widetilde{P}_0)}[[q]]^{(2n)}$$ and a choice of $p$-adic period $\Omega_{P} \in \overline{\mathbb{Q}}_p$ for $P \in \mathfrak{U}_0
$ satisfying the following: 
\begin{itemize}
\item[(i)] $\Omega_{P_0} \ne 0$.   
 
\item[(ii)] For each $\widetilde{P}=(k, \omega^{k}) \in \mathfrak{X}_{\rm alg}(\widetilde{R}^{\rm ord})$ lying over $P=(2k, \omega^{2k}) \in \mathfrak{U}_0$ with 
$k \equiv k_0 \pmod{2}$, we have
\[
{\bf a}_T(\widetilde{P})={\Omega_{P} \over \Omega^{\epsilon}(P)}\,
A_T({\rm Lift}_{\mathfrak{d}_0}^{(2n)}(f_{2k})^{*}),
\]
where $\Omega^{\epsilon}(P) \in \mathbb{C}^{\times}$ is the complex period of $P$ with signature $\epsilon \in \{\pm \}$. 
\end{itemize}
\end{thm}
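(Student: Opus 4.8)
The plan is to reduce the $\Lambda$-adic statement to the explicit Fourier-coefficient formula of Theorem~4.1 combined with the $\Lambda$-adic Shintani lifting of Theorem~IV (in the refined form of Proposition~3.7). First I would recall that by Theorem~4.1, for each $0<T\in\mathrm{Sym}_{2n}^{*}(\mathbb Z)$ with $\mathfrak D_T=\mathfrak d_T\mathfrak f_T^2$,
\[
A_T\bigl({\rm Lift}_{\mathfrak d_0}^{(2n)}(f_{2k})^{*}\bigr)
=\Bigl(1-\bigl(\tfrac{\mathfrak d_T}{p}\bigr)\beta_p(f_{2k})p^{-k}\Bigr)
c_{|\mathfrak d_T|}\bigl(\vartheta_{\mathfrak d_0}(f_{2k})\bigr)
\cdot\alpha_p(f_{2k})^{v_p(\mathfrak f_T)+n(n+1)}\!\!\prod_{l\mid\mathfrak f_T,\;l\ne p}\!\!\alpha_l(f_{2k})^{v_l(\mathfrak f_T)}F_l(T;l^{-k-n}\beta_l(f_{2k})),
\]
where the half-integral weight form is normalized as the $\mathfrak d_0$-th Shintani lift. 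The key observation is that every factor on the right is, up to the common normalizing period $\Omega^{\epsilon}(P)$, the specialization of an element of $(\widetilde R^{\mathrm{ord}})_{(\widetilde P_0)}$: the factor $(1-(\tfrac{\mathfrak d_T}{p})\beta_p(f_{2k})p^{-k})\,c_{|\mathfrak d_T|}(\vartheta_{\mathfrak d_0}(f_{2k}))$ is exactly $\Omega^{\epsilon}(P)^{-1}\Omega_P^{-1}$ times the specialization of ${\bf c}(\mathfrak d_0;|\mathfrak d_T|)\in(\widetilde R^{\mathrm{ord}})_{(\widetilde P_0)}$ produced in Proposition~3.7 (note $\mathfrak d_T$ is a fundamental discriminant with $(-1)^k\mathfrak d_T>0$, so the proposition applies), while the remaining Euler-type product in the primes $l\mid\mathfrak f_T$ is a fixed polynomial expression in $a_l(f_{2k})=a_l(f_{2k}^{*})={\bf a}_l(P)$ and $l^{2k-1}$, hence the specialization of the corresponding universal Hecke eigenvalues ${\bf a}_l\in R^{\mathrm{ord}}\hookrightarrow\widetilde R^{\mathrm{ord}}$.

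Next I would package this. Using the refined Kohnen formula (9) of the excerpt, rewrite $\alpha_l(f)^{v_l(\mathfrak f_T)}F_l(T;l^{-k-n}\beta_l(f))=\sum_{i=0}^{v_l(\mathfrak f_T)}\phi_T(l^{v_l(\mathfrak f_T)-i})(l^{k-1})^{v_l(\mathfrak f_T)-i}a_{l^i}(f_{2k})$, a polynomial in $a_l(f_{2k})$ with coefficients in $\mathbb Z[l^{k-1}]$. The only delicate ingredient is the appearance of $l^{k-1}$ (and of $\alpha_p(f_{2k})^{v_p(\mathfrak f_T)+n(n+1)}$): here I would use that $\Lambda_1$ carries the universal character interpolating $y\mapsto y^{2k}$ over the relevant arithmetic points, so that $l^{k-1}$ and the $p$-unit power $\alpha_p(f_{2k})^{n(n+1)}$ are specializations of units in $\widetilde R^{\mathrm{ord}}$ — precisely here the passage to the metaplectic double cover $\widetilde R^{\mathrm{ord}}=R^{\mathrm{ord}}\otimes_{\Lambda_1,\sigma}\Lambda_1$ is needed, since the relevant weight at $\widetilde P=(k,\omega^k)$ is $k$, half the weight $2k$ at $P$. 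The factor $\alpha_p(f_{2k})^{v_p(\mathfrak f_T)}$ is the specialization of ${\bf a}_p^{v_p(\mathfrak f_T)}\in R^{\mathrm{ord}}$, which lies in $R^{\mathrm{ord}}{}^{\times}$ on $\mathfrak U_0$ after shrinking (it is a $p$-adic unit at $P_0$ by ordinarity, hence a unit on a neighborhood). Defining
\[
{\bf a}_T:={\bf c}(\mathfrak d_0;|\mathfrak d_T|)\cdot{\bf a}_p^{\,v_p(\mathfrak f_T)+n(n+1)}\cdot\prod_{l\mid\mathfrak f_T,\;l\ne p}\Bigl(\sum_{i=0}^{v_l(\mathfrak f_T)}\phi_T(l^{v_l(\mathfrak f_T)-i})\,\langle l\rangle^{(v_l(\mathfrak f_T)-i)}\,{\bf a}_{l^i}\Bigr)\in(\widetilde R^{\mathrm{ord}})_{(\widetilde P_0)},
\]
where $\langle l\rangle$ denotes the appropriate unit of $\widetilde R^{\mathrm{ord}}$ interpolating $l^{k-1}$, one gets ${\bf F}=\sum_{T>0}{\bf a}_T\,q^T$ with the required interpolation property. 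Property (i), $\Omega_{P_0}\ne0$, is inherited verbatim from Theorem~IV / Proposition~3.7, and the nonvanishing ${\bf F}(\widetilde P)\ne0$ (used in Theorem~1.3) follows from Lemma~2.4 applied to $\mathfrak d_T=\mathfrak d_0$, for which $c_{|\mathfrak d_0|}(\vartheta_{\mathfrak d_0}(f_{2k_0}))\ne0$ by the hypothesis of Proposition~3.7.

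I expect the main obstacle to be bookkeeping the compatibility of normalizations: one must check that the single period $\Omega_P$ (and complex period $\Omega^{\epsilon}(P)$) coming from the $\Lambda$-adic Shintani lifting is simultaneously the correct normalizing factor for every Fourier coefficient $A_T$, i.e. that the $T$-dependence of $A_T({\rm Lift}_{\mathfrak d_0}^{(2n)}(f_{2k})^{*})/\Omega^{\epsilon}(P)$ is genuinely $\Lambda$-adically interpolable with coefficients in the \emph{localization} at $\widetilde P_0$ (not merely in a ring depending on $T$). This is exactly why ${\rm Lift}^{(2n)}$ must be normalized via $\vartheta_{\mathfrak d_0}$ rather than via Kohnen's $h$: the Kohnen–Zagier formula (8) shows $c_{|\mathfrak d_0|}(h)$ is a transcendental-looking square root, whereas $c_{|\mathfrak d_0|}(\vartheta_{\mathfrak d_0}(f_{2k}))/\Omega^{\epsilon}(P)$ is algebraic and, crucially, $\Lambda$-adically interpolated by Theorem~IV. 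A secondary technical point is ensuring that the product over $l\mid\mathfrak f_T$ only involves finitely many primes and that $\phi_T$ and the local densities behave well at $p$; this is handled by the explicit Theorem~4.1 itself, which has already absorbed the $p$-Euler factor into the clean closed form, so no further local analysis at $p$ is needed beyond invoking ordinarity to guarantee $\alpha_p(f_{2k})\in\overline{\mathbb Z}_p^{\times}$ uniformly on $\mathfrak U_0$.
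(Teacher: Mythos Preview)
Your proposal is correct and follows essentially the same route as the paper: combine Theorem~4.1 with Kohnen's rewriting (9) of the local factors, interpolate the half-integral datum via Proposition~3.7, interpolate the Hecke eigenvalues $a_{l^i}(f_{2k})$ via the universal elements ${\bf a}_{l^i}\in R^{\rm ord}$, and interpolate the remaining powers $l^{k-1}$ by elements of the coefficient ring (the paper writes these as ${\bf d}(l^r)\in\Lambda$, your $\langle l\rangle$). Your formula for ${\bf a}_T$ matches the paper's definition (16) almost verbatim; in fact your exponent ${\bf a}_p^{\,v_p(\mathfrak f_T)+n(n+1)}$ is the one dictated by Theorem~4.1, whereas the paper's (16) records only ${\bf a}_p^{\,n(n+1)}$, which appears to be a slip.
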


\begin{proof}
By combining Theorem 4.1 with the equation (9), we have 
\begin{eqnarray*}
\lefteqn{
A_{T}({\rm Lift}^{(2n)}(f_{2k})^{*})
} \\
&=& 
\left( 
1 - \left(
{\mathfrak{d}_T \over p}
\right)
\beta_p(f_{2k}) p^{-k} \right) 
c_{|\mathfrak{d}_T|}(\vartheta_{\mathfrak{d}_0}(f_{2k}))   \\
&& 
\times\, \alpha_p(f_{2k})^{v_p(\mathfrak{f}_T)+n(n+1)} \prod_{\scriptstyle l | \mathfrak{f}_{T}, \atop {\scriptstyle l \ne p}} 
\alpha_l(f_{2k})^{v_l(\mathfrak{f}_T)} F_l(T, l^{-k-n}\beta_l(f_{2k})) \\
&=& 
\left( 
1 - \left(
{\mathfrak{d}_T \over p}
\right)
\beta_p(f_{2k}) p^{-k} \right) 
c_{|\mathfrak{d}_T|}(\vartheta_{\mathfrak{d}_0}(f_{2k}))   \\
&& 
\times\, \alpha_p(f_{2k})^{n(n+1)} \prod_{\scriptstyle l | \mathfrak{f}_{T}, \atop {\scriptstyle l \ne p}} 
\sum_{i=0}^{v_l(\mathfrak{f}_T)} \phi_T(l^{v_l(\mathfrak{f}_T)-i}) 
(l^{v_l(\mathfrak{f}_T)-i})^{k-1} a_{l^{i}}(f).  
\end{eqnarray*}
We easily see that for each prime $l \ne p$ and each integer $r\ge 0$, there exists an element ${\bf d}({l^r}) \in \Lambda
$ such that 
${\bf d}(l^r)(P) = l^{r(k-1)}$ for each $P=(2k ,\, \omega^{2k}) \in \mathfrak{X}_{\rm alg}(R^{\rm ord})$.  
We define an element ${\bf a}_{T}\in (\widetilde{R}^{\rm ord})_{(\widetilde{P}_0)}$ by 
\begin{equation}
{\bf a}_{T}:={\bf c}(\mathfrak{d}_0;\,|\mathfrak{d}|)\, {\bf a}_p^{n(n+1)} \prod_{\scriptstyle l | \mathfrak{f}_{T}, \atop {\scriptstyle l \ne p}} 
\sum_{i=0}^{v_l(\mathfrak{f}_T)} \phi_T(l^{v_l(\mathfrak{f}_T)-i}) 
{\bf d}(l^{v_l(\mathfrak{f}_T)-i})\, {\bf a}_{l^{i}},  
\end{equation}
where ${\bf c}(\mathfrak{d}_0;\,|\mathfrak{d}|) \in (\widetilde{R}^{\rm ord})_{(\widetilde{P}_0)}$ is the element in Proposition 3.7. Then the desired interpolation property follows immediately from Theorem II and Proposition 3.7, and we complete the proof. 
\end{proof}

According to the explicit form (16), the $\Lambda$-adic Duke-Imamo{\={g}}lu lifting admits a non-vanishing property as well as 
the $\Lambda$-adic Shintani lifting that we have already established in \S\S 3.2. Therefore we consequently obtain 
a semi-ordinary $p$-adic analytic family of non-zero cuspidal Hecke eigenforms 
$
\{{\Omega_{P} \over \Omega^{\epsilon}(P)}\,
{\rm Lift}_{\mathfrak{d}_0}^{(2n)}(f_{2k})^{*} \in \mathscr{S}_{k+n}(\Gamma_0(p))^{(2n)}\}
$
parametrized by varying $P=(2k,\,\omega^{2k}) \in \mathfrak{U}_0$ with 
$k \equiv k_0 \equiv n \pmod{2}$. 

\begin{rem}
Recently, Ikeda \cite{Ike10} generalized the classical Duke-Imamo{\={g}}lu lifting to 
a Langlands functorial lifting of cuspidal automorphic representations of ${\rm PGL}_2(\mathbb{A}_K)$ to ${\rm Sp}_{4n}(\mathbb{A}_K)$ over a totally real algebraic field $K$, which contains some 
generalizations of the classical Duke-Imamo{\={g}}lu lifting for $\mathscr{S}_{2k}(\Gamma_0(N))^{(1)}$ with $N \ge 1$. 
Unfortunately, the case $N=p$ has been excluded from the framework so far.  
It is because 
resulting automorphic representations of ${\rm Sp}_{4n}(\mathbb{A}_K)$ are likely to have 
the Steinberg representations as their $p$-local components. 
Therefore we cannot use the 
Duke-Imamo{\={g}}lu lifting of 
$f_{2k}^{*} \in \mathscr{S}_{2k}(\Gamma_0(p))^{(1)}$ directly. However, by virtue of Theorem 4.1 and Corollary 4.2, 
we may regard the semi-ordinary 
$p$-stabilized Duke-Imamo{\={g}}lu lifting 
${\rm Lift}_{\mathfrak{d}_0}^{(2n)}(f_{2k})^{*}\in \mathscr{S}_{k+n}(\Gamma_0(p))^{(2n)}$ naturally as 
a possible generalized Duke-Imamo{\={g}}lu lifting of $
f_{2k}^{*} \in \mathscr{S}_{2k}^{\rm old}(\Gamma_0(p))^{(1)}$. The author expects that 
the theory of the $p$-adic stabilization for Siegel modular forms is not only interesting in its own right 
but also useful in the study of classical Siegel modular forms and the associated automorphic representations.  
\end{rem}

\section{
Appendix
}

\subsection{$\Lambda$-adic Siegel Eisenstein series of even genus}

As applications of the methods we have used in the previous \S 4, we give a similar result for the Siegel Eisenstein series of even genus. 

\vspace*{2mm}Recall that for the 
Eisenstein series $E_{2k}^{(1)}  \in \mathscr{M}_{2k}({\rm SL}_2(\mathbb{Z}))$, the ordinary $p$-stabilization  
\[
(E_{2k}^{(1)})^{*}(z):=E_{2k}^{(1)}(z)-p^{2k-1}E_{2k}^{(1)}(pz) \in \mathscr{M}_{2k}(\Gamma_0(p))^{(1)} \hspace*{3mm} (z \in \mathfrak{H}_1)
\]
can be assembled into a $p$-adic analytic family (cf. \cite{Hid93}). On the other hand, 
we have mentioned in \S\S 2.1 that for each pair of positive integers $n,\,k$ with $k > n+1$ and $k \equiv n \pmod{2}$, the Siegel Eisenstein series $E_{k+n}^{(2n)}\in \mathscr{M}_{k+n}({\rm Sp}_{4n}(\mathbb{Z}))$ can be formally regarded as 
the Duke-Imamo{\={g}}lu lifting of $E_{2k}^{(1)}$. 
Therefore by replacing the Hida family $\{f_{2k}^{*}\}$ with $\{(E_{2k}^{(1)})^{*}\}$, we may naturally deduce the following: 

\begin{thm} 
For each positive integers $n$ and $k$ with $k>n+1$ and $k \equiv n \pmod{2}$, put
\[
(E_{k+n}^{(2n)})^{*}:=
{\Psi_p^{*}(1) \over \Phi_p^{*}(1)}\cdot E_{k+n}^{(2n)}|_{k+n} \Phi_p^{*}(U_{p,0}), 
\]
where $\Phi_p^{*}(Y)$, $\Psi_p^{*}(Y) \in \overline{\mathbb{Q}}_p^{\times}[Y]$ denote the polynomials defined in \S 4, but for $(\alpha_p(E_{2k}^{(1)}),\beta_p(E_{2k}^{(1)}))=(1,p^{2k-1})$. 
Then we have
\begin{enumerate}[\upshape (i)]
\item 
$(E_{k+n}^{(2n)})^{*}$ is a non-cuspidal Hecke eigenform in $\mathscr{M}_{k+n}(\Gamma_0(p))^{(2n)}$ such that the Hecke eigenvalues agree with $E_{k+n}^{(2n)}$ for each $l \ne p$ and 
\[
(E_{k+n}^{(2n)})^{*}|_{k+n}U_{p,0}=(E_{k+n}^{(2n)})^{*}. 
\]
\item Let $\mathcal{L}={\rm Frac}(\Lambda)$ be the field of fractions of $\Lambda$. 
If $0 \le T \in {\rm Sym}_{2n}^{*}(\mathbb{Z})$ satisfies either ${\rm rank}(T)=0$ or ${\rm rank}(T)=2n$ ${\rm (i.e.\, }T>0{\rm )}$, 
there exists an element ${\bf e}_T \in \mathcal{L}$ such that 
\[
{\bf e}_T(P_{2k})=A_T((E_{k+n}^{(2n)})^{*})
\]
for each $P_{2k} \in \mathfrak{X}_{\rm alg}(\Lambda)$ with $P_{2k}(\gamma)=\gamma^{2k}$.  
\end{enumerate}
\end{thm}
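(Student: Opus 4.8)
The plan is to transcribe the argument of \S4 to the Eisenstein setting, using that $E_{k+n}^{(2n)}$ is, formally, the Duke--Imamo{\={g}}lu lifting of the normalized elliptic Eisenstein series $E_{2k}^{(1)}$ --- with ordered pair of Satake roots $(\alpha_p(E_{2k}^{(1)}),\beta_p(E_{2k}^{(1)}))=(1,p^{2k-1})$ at every prime, and with Cohen's Eisenstein series $H_{k+1/2}$ (for which $c_{|\mathfrak{d}|}(H_{k+1/2})=L(1-k,\left({\mathfrak{d} \over *}\right))$) in place of the half-integral weight companion. For part~(i) I would first note that the polynomial identities behind the proof of Theorem~4.1 concern only the Siegel series $b_l(T;X)$ and the polynomials $F_l(T;X)$, hence remain valid under the specialization $(\alpha_p,\beta_p)=(1,p^{2k-1})$; combining this with equation~(3) and the action~(14) of $U_{p,0}$ on Fourier expansions gives, for every $0<T\in{\rm Sym}_{2n}^{*}(\mathbb{Z})$ with $\mathfrak{D}_T=\mathfrak{d}_T\mathfrak{f}_T^{2}$,
\[
A_T\bigl((E_{k+n}^{(2n)})^{*}\bigr)=L^{(p)}\!\left(1-k,\left({\mathfrak{d}_T \over *}\right)\right)\prod_{\substack{l\mid\mathfrak{f}_T\\ l\ne p}}F_l(T;\,l^{k-n-1}),
\]
where $L^{(p)}$ denotes the Dirichlet $L$-value with the Euler factor at $p$ removed: the factor $1-\left({\mathfrak{d}_T \over p}\right)p^{k-1}$ produced by the $p$-stabilization is exactly the inverse of that Euler factor at $s=1-k$, and the power of $\alpha_p=1$ in the analogue of Theorem~4.1 is trivial. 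Then, just as Corollary~4.2 was deduced from Theorem~4.1, this together with the elementary properties of $U_{p,0}$ recalled after~(14) yields $(E_{k+n}^{(2n)})^{*}\in\mathscr{M}_{k+n}(\Gamma_0(p))^{(2n)}$, agreement of the Hecke eigenvalues at each $l\ne p$, and $(E_{k+n}^{(2n)})^{*}|_{k+n}U_{p,0}=(E_{k+n}^{(2n)})^{*}$; non-cuspidality will follow from the non-vanishing of the constant term computed next (note that $k\equiv n\pmod{2}$ forces $k+n$ and each $1-2k-2n+2i$ to be even, so none of the relevant zeta values vanish).

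For the constant term --- the rank-$0$ case of~(ii) --- I would use that $U_{p,0}$ fixes $A_0$ by~(14), so that $\Phi_p^{*}(U_{p,0})$ multiplies $A_0$ by $\Phi_p^{*}(1)$ and therefore
\[
A_0\bigl((E_{k+n}^{(2n)})^{*}\bigr)=\Psi_p^{*}(1)\,A_0\bigl(E_{k+n}^{(2n)}\bigr)=2^{-n}\,\zeta^{(p)}(1-k-n)\prod_{i=1}^{n}\zeta^{(p)}(1-2k-2n+2i),
\]
where $\zeta^{(p)}(s)=(1-p^{-s})\zeta(s)$: with $(\alpha_p,\beta_p)=(1,p^{2k-1})$ one has $\Psi_p^{*}(1)=(1-p^{k+n-1})\prod_{i=1}^{n}(1-p^{2k+2i-3})$, and after reindexing these are precisely the Euler factors at $p$ that convert $\zeta(1-k-n)\prod_{i}\zeta(1-2k-2n+2i)$ into its $p$-incomplete form. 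Now the rank-$0$ coefficient above, and in the full-rank case the factor $L^{(p)}\!\left(1-k,\left({\mathfrak{d}_T \over *}\right)\right)$, are all special values of Kubota--Leopoldt $p$-adic $L$-functions attached to a \emph{fixed} character; each is therefore the value at $P_{2k}$ of an element of $\mathcal{L}={\rm Frac}(\Lambda)$ --- a ratio of power series with a genuine pole precisely when the character is trivial, which is why one must pass from $\Lambda$ to $\mathcal{L}$. The remaining factors $F_l(T;\,l^{k-n-1})$ with $l\ne p$ are polynomials of bounded degree in $l^{k-n-1}$, and each monomial $l^{j(k-n-1)}$ is interpolated along the weights $2k$ by an element of $\Lambda$ exactly as the quantities ${\bf d}(l^{r})$ in the proof of Theorem~4.4; multiplying everything together produces the desired ${\bf e}_T\in\mathcal{L}$.

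I expect the main obstacle to be twofold. First, one must check that the polynomial identity underlying Theorem~4.1 really does survive the \emph{degenerate} specialization $\beta_p/\alpha_p=p^{2k-1}$, in which the $2^{2n}$ spinor roots all collapse to powers of $p$, and in particular that $\Phi_p^{*}(1)\,\Psi_p^{*}(1)\ne 0$ so that the $p$-stabilization is well defined and non-zero; this is exactly where the hypothesis $k>n+1$ enters, keeping all the relevant $p$-exponents strictly positive and keeping $Y=1$ off the roots of $\Phi_p^{*}$. Second, in part~(ii) the arithmetic bookkeeping for the Kubota--Leopoldt functions must be done carefully: selecting the correct Teichm\"uller branch for each of the characters $\left({\mathfrak{d}_T \over *}\right)$ and for the trivial character, respecting the parity condition $(-1)^{k}\mathfrak{d}_T>0$ (which is what makes $L\!\left(1-k,\left({\mathfrak{d}_T \over *}\right)\right)$ a critical value), and locating the finitely many poles so that the interpolation formula ${\bf e}_T(P_{2k})=A_T\bigl((E_{k+n}^{(2n)})^{*}\bigr)$ holds at every arithmetic point of the family. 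Once these are settled, the rest is a routine transcription of \S4, and the non-vanishing implicit in~(ii) follows from the non-vanishing of the classical Cohen--Eisenstein coefficients.
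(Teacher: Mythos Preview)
Your proposal is correct and follows essentially the same route as the paper: transcribe Theorem~4.1 and Corollary~4.2 to the Eisenstein setting with $(\alpha_p,\beta_p)=(1,p^{2k-1})$ to obtain part~(i) and the explicit formulas $A_T\bigl((E_{k+n}^{(2n)})^{*}\bigr)=L^{(p)}\!\bigl(1-k,\left(\tfrac{\mathfrak{d}_T}{*}\right)\bigr)\prod_{l\mid\mathfrak{f}_T,\,l\ne p}F_l(T;\,l^{k-n-1})$ for $T>0$ and $A_0=2^{-n}\zeta^{(p)}(1-k-n)\prod_{i=1}^{n}\zeta^{(p)}(1-2k-2n+2i)$, then conclude part~(ii) by invoking the Kubota--Leopoldt $p$-adic $L$-functions. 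Your additional remarks on the non-vanishing of $\Phi_p^{*}(1)$ and the Teichm\"uller bookkeeping are more detailed than what the paper spells out, but they do not change the underlying argument.
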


\begin{proof}By making use of the fundamental properties of $U_{p,0}$ 
stated as in \S 4, we easily obtain the assertion (i). 
For each $T \in {\rm Sym}_{2n}^{*}(\mathbb{Z})$ with ${\rm rank}(T)=2n$, by 
applying Theorem 4.1 for $E_{2k}^{(1)}$ instead of $f_{2k}$, we have
\begin{eqnarray*}
A_{T}((E_{k+n}^{(2n)})^{*})
&=& 
\left( 
1 - \left(
{\mathfrak{d}_T \over p}
\right)
p^{k-1}
 \right) 
L(1-k,\left(
{\mathfrak{d}_T \over *}
\right))
 \prod_{\scriptstyle \,\,l | \mathfrak{f}_{T}, \atop {\scriptstyle l \ne p}} 
F_l(T;\, 
l^{k-n-1}
) \\
&=& 
L^{(p)}(1-k,\left(
{\mathfrak{d}_T \over *}
\right))
 \prod_{\scriptstyle \,\,l | \mathfrak{f}_{T}, \atop {\scriptstyle l \ne p}} 
F_l(T;\, l^{k-n-1}
),
\end{eqnarray*}
where $L^{(p)}(s,\left(
{\mathfrak{d}_T \over *}
\right)):=L(s,\left(
{\mathfrak{d}_T \over *}
\right))(1- \left(
{\mathfrak{d}_T \over p}
\right) p^{-s})$.
If $T \in {\rm Sym}_{2n}^{*}(\mathbb{Z})$ has ${\rm rank}(T)=0$, then it follows from the definition of $E_{k+n}^{(2n)}$ that 
\begin{eqnarray*}
A_T((E_{k+n}^{(2n)})^{*}) 
&=& {\Psi_p^{*}(1) \over \Phi_p^{*}(1)} \cdot \Phi_p^{*}(1) \cdot A_T(E_{k+n}^{(2n)}) \\
&=& 2^{-n} \zeta^{(p)}(1-k-n) \prod_{i=1}^{n} \zeta^{(p)}(1-2k-2n+2i), 
\end{eqnarray*}
where $\zeta^{(p)}(s):=\zeta(s) (1-p^{-s})$. 
Therefore, the
assertion (ii) follows immediately from the constructions of $p$-adic $L$-functions in the sense of Kubota-Leopoldt. 
\end{proof}

\begin{rem} More generally in the same context, Panchishkin \cite{Pan00} has already obtained a similar result for the twisted Siegel Eisenstein series of arbitrary genus $g$ by a cyclotomic character. However, in that case, we may conduce only the $p$-adic interpolation properties of the Fourier coefficients for each positive definite $T \in {\rm Sym}_{g}^{*}(\mathbb{Z})$ with $p \!\not\!|\, \det(T)$. 
Fortunately, in our setting, the description of the $p$-stabilization process allows us to resolve the $p$-adic interpolation problem for more general Fourier coefficients, but only for $T \in {\rm Sym}_{2n}^{*}(\mathbb{Z})$ with rank occupying in extreme cases. 
\end{rem}

When $n=1$, by virtue of \cite{Kau59}, we obtain a complete 
satisfactory result for the $\Lambda$-adic Siegel Eisenstein series of genus $2$ and of level $1$ as follows: 

\begin{cor}
There exists a formal Fourier expansion  
\[{\bf E}=\sum_{T \ge 0} 
{\bf e}
_T
\, q^T 
\in \mathcal{L}[[q]]^{(2)}
\] 
such that  
\[
{\bf E}(P_{2k})=(E_{k+1}^{(2)})^{*}
\] 
for each $P_{2k} \in \mathfrak{X}_{\rm alg}(\Lambda)$ with $k > 2$ and $k \equiv 1 \pmod{2}$. 
\end{cor}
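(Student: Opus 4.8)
The plan is to supply the one range of Fourier coefficients that Theorem 5.1 leaves untreated when $n=1$. By Theorem 5.1(ii) with $n=1$, the semi-ordinary stabilization $(E_{k+1}^{(2)})^{*}\in\mathscr{M}_{k+1}(\Gamma_0(p))^{(2)}$ already has each of its coefficients $A_T\bigl((E_{k+1}^{(2)})^{*}\bigr)$ interpolated by an element of $\mathcal{L}$ whenever ${\rm rank}(T)\in\{0,2\}$. Since a positive semi-definite $T\in{\rm Sym}_2^{*}(\mathbb{Z})$ has rank $0$, $1$, or $2$, it remains only to produce ${\bf e}_T\in\mathcal{L}$ with ${\bf e}_T(P_{2k})=A_T\bigl((E_{k+1}^{(2)})^{*}\bigr)$ for all $P_{2k}$ as in the statement when ${\rm rank}(T)=1$; then ${\bf E}=\sum_{T\ge 0}{\bf e}_T\,q^{T}$ does the job.

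For the rank-$1$ coefficients I would invoke Kaufhold's explicit evaluation of the Fourier coefficients of the genus-$2$ Siegel Eisenstein series \cite{Kau59}; equivalently, one computes the Siegel $\Phi$-operator: $\Phi(E_{k+1}^{(2)})$ is a genus-$1$ Eisenstein series of weight $k+1$ with constant term $2^{-1}\zeta(-k)\zeta(1-2k)$, hence equals $\zeta(1-2k)\bigl(\tfrac{\zeta(-k)}{2}+\sum_{m\ge 1}\sigma_{k}(m)q^{m}\bigr)$ (note $k+1\ge 4$, as $k>2$ is odd, so this is a genuine holomorphic form). Every rank-$1$ matrix $T\ge 0$ is ${\rm GL}_2(\mathbb{Z})$-equivalent to ${\rm diag}(g,0)$, where $g\ge 1$ is the content of $T$, and Fourier coefficients at level $1$ depend only on the ${\rm GL}_2(\mathbb{Z})$-class; therefore $A_T(E_{k+1}^{(2)})=\zeta(1-2k)\,\sigma_{k}(g)$. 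Since $\Phi_p^{*}(Y)=\Psi_p^{*}(Y)=(Y-p^{k})(Y-p^{2k-1})$ and $\Psi_p^{*}(1)/\Phi_p^{*}(1)=1$ when $n=1$, formula (14) for the action of $U_{p,0}$ on Fourier coefficients gives
\[
A_T\bigl((E_{k+1}^{(2)})^{*}\bigr)=A_{p^{2}T}(E_{k+1}^{(2)})-(p^{k}+p^{2k-1})\,A_{pT}(E_{k+1}^{(2)})+p^{3k-1}\,A_T(E_{k+1}^{(2)}).
\]
Substituting $A_{mT}(E_{k+1}^{(2)})=\zeta(1-2k)\,\sigma_{k}(mg)$ for $m\in\{1,p,p^{2}\}$, writing $g=p^{a}g'$ with $p\nmid g'$, and using the multiplicativity of $\sigma_{k}$, a short computation collapses the three-term combination to $(1-p^{2k-1})\,\sigma_{k}(g')$ independently of $a$; hence $A_T\bigl((E_{k+1}^{(2)})^{*}\bigr)=\zeta^{(p)}(1-2k)\,\sigma_{k}(g')$. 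In other words, on rank-$1$ coefficients $\Phi_p^{*}(U_{p,0})$ performs exactly the Atkin--Lehner $p$-stabilization of the underlying genus-$1$ Eisenstein series.

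Finally, $\zeta^{(p)}(1-2k)$ is interpolated by (a branch of) the Kubota--Leopoldt $p$-adic zeta function, and $\sigma_{k}(g')=\prod_{l\mid g',\,l\ne p}\sum_{i=0}^{v_l(g')}l^{ik}$ is interpolated by an element of $\Lambda$, each $l^{ik}$ with $l\ne p$ being interpolable exactly as the elements appearing in the proof of Theorem 4.4; so the product lies in $\mathcal{L}$. This is the same Kubota--Leopoldt argument already used for ranks $0$ and $2$ in the proof of Theorem 5.1(ii). Combining the rank-$1$ elements with those of Theorem 5.1(ii) produces ${\bf E}=\sum_{T\ge 0}{\bf e}_T\,q^{T}\in\mathcal{L}[[q]]^{(2)}$ with ${\bf E}(P_{2k})=(E_{k+1}^{(2)})^{*}$ for every arithmetic $P_{2k}$ with $k>2$ and $k\equiv 1\pmod 2$. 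The only genuinely new point is the rank-$1$ calculation: here \cite{Kau59} (or, equivalently, the explicit Siegel $\Phi$-operator) is indispensable, since Theorem 4.1 and Theorem 5.1(ii) give nothing for singular $T$ of positive rank. The subsequent collapse of the three-term combination and the $p$-adic interpolation are then routine.
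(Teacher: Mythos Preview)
Your proof is correct and follows essentially the same route as the paper: reduce to rank-$1$ via Theorem~5.1, invoke Kaufhold (you phrase it via the Siegel $\Phi$-operator, which is equivalent) to get $A_T(E_{k+1}^{(2)})=\zeta(1-2k)\,\sigma_k(\mathfrak{m}_T)$, compute the effect of $\Phi_p^{*}(U_{p,0})$ to strip the $p$-part, and then interpolate via Kubota--Leopoldt. The only cosmetic difference is that the paper writes $\sigma_k(\mathfrak{m}_T)$ as $\prod_{l\mid \mathfrak{m}_T}F_l^{(1)}(\mathfrak{m}_T;\,l^{k-1})$ and verifies a polynomial identity in $X$ (parallel to the $n=1$ case of Theorem~4.1) rather than your direct three-term computation with $\sigma_k$; the substance is identical.
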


Indeed, by virtue of Theorem 5.1, it suffices to show the assertion 
for each $0 \le T \in {\rm Sym}_{2}^{*}(\mathbb{Z})$
with ${\rm rank}(T)=1$. Then Theorem in \cite{Kau59} yields 
that $A_T((E_{k+1}^{(2)})^{*})$ can be written as 
\[
A_T(E_{k+1}^{(2)})=
\zeta(1-2k) \displaystyle\prod_{\scriptstyle l | \mathfrak{m}_T
} F_l^{(1)}(\mathfrak{m}_T;\, l^{k-1}), 
\] 
where $\mathfrak{m}_T = \max\{ 0<m \in \mathbb{Z} \,|\, m^{-1} T \in {\rm Sym}_2^{*}(\mathbb{Z})\}$ and  
$F_l^{(1)}(b,\, X)=\sum_{i=0}^{v_l(b)} (lX)^i$ for each integer $b>0$. 
Since we easily see that 
\[
F_p^{(1)}(p^2 \mathfrak{m}_T;\, pX) - (p^2 X + p^3 X^2) F_p^{(1)}(p \mathfrak{m}_T;\,pX)+p^5 X^3 F_p^{(1)}(\mathfrak{m}_T;\,pX)=1 -p^3 X^2,
\]
we have 
\[
A_T((E_{k+1}^{(2)})^{*})=
\zeta^{(p)}(1-2k) \displaystyle\prod_{\scriptstyle l | \mathfrak{m}_T, \atop {\scriptstyle l \ne p}} F_l^{(1)}(\mathfrak{m}_T;\, l^{k-1}).  
\] 
Hence we obtain the assertion. \hfill $\Box$

\vspace*{3mm}
For each $n >1$, the author does not 
know the explicit form of 
the Fourier coefficient $A_T(E_{k+n}^{(2n)})$ for $0 \le T \in {\rm Sym}_{2n}^{*}(\mathbb{Z})$ with $1 \le  {\rm rank}(T) <2n$. However, the $\Lambda$-adic Siegel Eisenstein series which interpolates the whole Fourier expansion of 
$(E_{k+n}^{(2n)})^{*}$ is expected to exist in general. 

\begin{rem}
Along the same line as above, Takemori \cite{Tak10} independently proved that 
for each integer $r \ge 0$ and each integer $N \ge 1$ prime to $p$, 
a similar result also holds for the Siegel Eisenstein series of genus $2$ and of level $Np^r$ with a primitive character. In the present article, we mainly dealt with the 
Duke-Imamo{\={g}}lu lifting according to \cite{Ike01}, which requires the conditions $r=0$ and $N=1$. Therefore the Siegel Eisenstein series has been discussed under the same conditions, but for arbitrary even genus $2n \ge2$. We note that the method we use is also extendable to the Siegel Eisenstein series of even genus in a more general setting, at least, for $N>1$. 
\end{rem}


\subsection{Numerical evidences}
Finally in this section, we present some numerical evidences of the $p$-adic analytic family $\{{\rm Lift}^{(2n)}(f_{2k})^{*}\}$ which have been done by using the Wolfram {\it Mathematica 7}. 

\begin{ex}
As mentioned in Example 3.4, 
Ramanujan's $\Delta$-function $f_{12} \in \mathscr{S}_{12}({\rm SL}_2(\mathbb{Z}))$ and 
a normalized Hecke eigenform $f_{32} \in \mathscr{S}_{32}({\rm SL}_2(\mathbb{Z}))$ can be assembled into the same Hida family for $p=11$. 
We easily see that 
\begin{eqnarray*}
h_{13/2}&=&q - 56 q^4 + 120 q^5 - 240 q^8 +9 q^9 + 1440 q^{12}+\cdots \in \mathbb{Z}[[q]],\\
h_{33/2}&=&q + (x-32768) q^4 + (2x-65568) q^5 + (218x-7116672) q^8  \\
&& +(432x-14298687) q^9 -(2916x-103037184) q^{12}+\cdots \in \mathcal{O}_{K_{32}}[[q]]
\end{eqnarray*}
give rise to Hecke eigenforms in 
$\mathscr{S}_{13/2}(\Gamma_0(4))^{(1)}$ and 
$\mathscr{S}_{33/2}(\Gamma_0(4))^{(1)}$ 
corresponding respectively to $f_{12}$ and $f_{32}$ 
via the Shimura correspondence, 
where $\mathcal{O}_{K_{32}}$ denotes the ring of integers in the real quadratic field 
$K_{32}$. 
By making use of the induction formulas of $F_l(T;\,X)$ (cf. \cite{Kat99}), 
we computed Fourier coefficients of ${\rm Lift}^{(4)}(f_{12}) \in \mathscr{S}_8({\rm Sp}_4(\mathbb{Z}))$ and ${\rm Lift}^{(4)}(f_{32}) \in \mathscr{S}_{18}({\rm Sp}_4(\mathbb{Z}))$ for 
$4475$ half-integral symmetric matrices in 
$
{\rm Sym}_4^{*}(\mathbb{Z})$  
according to Nipp's table of equivalent classes of quaternary quadratic forms with discriminant up to $457$ (cf. \cite{Nipp}). 
For simplicity, 
we denote by $$[t_{11}, t_{22}, t_{33}, t_{44}, t_{12}, t_{13}, t_{23}, t_{14}, t_{24}, t_{34}]$$ 
the 
matrix 
\[
\left(
\begin{array}{cccc}
  t_{11} & {t_{12}}/2 & {t_{13}}/2 & {t_{14}}/2 \\
  {t_{12}}/2 & t_{22} & {t_{23}}/2 & {t_{24}}/2 \\
  {t_{13}}/2 & {t_{23}}/2 & t_{33} & {t_{34}}/2 \\
  {t_{14}}/2 & {t_{24}}/2 & {t_{34}}/2 & t_{44} \\
\end{array}
\right) \in {\rm Sym}_4^{*}(\mathbb{Z}). 
\]
For each $0<T\in 
{\rm Sym}_4^{*}(\mathbb{Z})$ with $\mathfrak{D}_T=\det(2T)=121$, 
$\mathfrak{d}_T=1$ and $\mathfrak{f}_T=11$, 
is equivalent to one of the following three representatives: 
\[
T_1=[1,1,3,3,0,1,0,0,1,0], \hspace*{5mm} 
T_2=[1,1,4,4,1,1,0,1,1,4], 
\]
\[
T_3=[2,2,2,2,2,1,0,1,1,2].  
\]
By virtue of Theorem in \cite{Kat99}, 
we obtain 
\[
F_{11}(T_i;X)=1-1452 X+161051 X^2
\]
for $i=1,\,2,\,3$. Then we checked that 
the norm of the difference 
\begin{center}
$A_{T_i}({\rm Lift}^{(4)}(f_{12})) - A_{T_i}({\rm Lift}^{(4)}(f_{32}))$ \hspace*{5mm}$(i=1,\,2,\,3)$ 
\end{center}
is factored 
into 
\[
2^8 \cdot 3^4 \cdot 5^5 \cdot 11 \cdot 171449 \cdot 680531 \cdot 35058959130397. 
\]
Moreover, for each $0<T\in 
{\rm Sym}_4^{*}(\mathbb{Z})$ with $4 \le \mathfrak{D}_T \le 457$ appearing in \cite{Nipp}, 
the norm of $A_{T}({\rm Lift}^{(4)}(f_{12})) - A_{T}({\rm Lift}^{(4)}(f_{32}))$ is indeed divisible by $p=11$. 
This fact supports that ${\rm Lift}^{(4)}(f_{12})$ is congruent to ${\rm Lift}^{(4)}(f_{32})$ modulo a prime ideal of $\mathcal{O}_{K_{32}}$ 
lying over $p=11$, and hence their semi-ordinary $11$-stabilizations 
${\rm Lift}^{(4)}(f_{12})^{*}$ and ${\rm Lift}^{(4)}(f_{32})^{*}$ 
reside both in the same $11$-adic analytic family. Similarly, we also produced another examples of 
congruences between Fourier coefficients of ${\rm Lift}^{(4)}(f_{20})\in \mathscr{S}_{12}({\rm Sp}_4(\mathbb{Z}))$ and ${\rm Lift}^{(4)}(f_{56})\in \mathscr{S}_{30}({\rm Sp}_4(\mathbb{Z}))$
for $p=19$. 
\end{ex}

%


\end{document}